\numberwithin{equation}{section}
\newtheorem{theorem}{Theorem}[section]
\newtheorem{corollary}[theorem]{Corollary}
\newtheorem{definition}[theorem]{Definition}
\newtheorem{proposition}[theorem]{Proposition}
\newtheorem{lemma}[theorem]{Lemma}
\newtheorem{remark}[theorem]{Remark}
\newcommand{\keywords}[1]{\textbf{\textit{Keywords ---}} #1}
\newcommand*\diff{\mathop{}\!\mathrm{d}}
\renewcommand{\P}{\mathbb{P}}
\newcommand{\E}{\mathbb{E}}
\newcommand{\N}{\mathbb{N}}
\newcommand{\bq}{\boldsymbol{q}}
\newcommand{\bX}{\boldsymbol{X}}
\newcommand{\bY}{\boldsymbol{Y}}
\newcommand{\cF}{\mathcal{F}}
\newcommand{\cN}{\mathcal{N}}
\begin{document}

\title{Mean-field Analysis for Load Balancing on Spatial Graphs}

\author[1]{Daan Rutten\thanks{Email: \href{mailto:drutten@gatech.edu}{drutten@gatech.edu}}\textsuperscript{,}}
\author[1]{Debankur Mukherjee}

\affil[1]{Georgia Institute of Technology}

\maketitle
\keywords{meanfield approximation, power-of-d, stochastic coupling, load balancing on network, data locality, many-server asymptotics, queueing theory}

\begin{abstract}
The analysis of large-scale, parallel-server load balancing systems has relied heavily on mean-field analysis. A pivotal assumption for this framework is that the servers are exchangeable. However, modern data-centers process a wide variety of task types and the data required to process tasks of a certain type is stored locally at servers. This gives rise to \emph{data locality constraints}, where tasks of a particular type can only be routed to a small subset of servers. An emerging line of research, therefore, considers load balancing algorithms on bipartite graphs where vertices in the two partitions represent the task types and servers, respectively, and an edge represents the server's ability to process the corresponding task type. Due to the lack of exchangeability in this model, the mean-field techniques fundamentally break down. Recent progress has been made by considering graphs with strong edge-expansion properties, i.e., where \emph{any} two large subsets of vertices are well-connected. However, data locality often leads to spatial constraints, where edges are local. As a result, these bipartite graphs do not have strong expansion properties.

In this paper, we consider the power-of-$d$ choices algorithm and develop a novel coupling-based approach to establish mean-field approximation for a large class of graphs that includes spatial graphs.
As a corollary, we also show that, as the system size becomes large, the steady-state occupancy process for \emph{arbitrary} regular bipartite graphs with diverging degrees, is indistinguishable from a fully flexible system on a complete bipartite graph. The method extends the scope of mean-field analysis far beyond the classical full-flexibility setup. En route, we prove that, starting from suitable states, the occupancy process becomes close to its steady state in a time that is independent of $N$. Such a large-scale mixing-time result might be of independent interest.
Numerical experiments are conducted, which positively support the theoretical results.
\end{abstract}

\section{Introduction}
\noindent
\textbf{Background and motivation.}
The study of load balancing algorithms for large-scale systems started with the seminal works of Mitzenmacher \cite{Mitzenmacher96} and Vvedenskaya et al.~\cite{VDK96}.
Since then, there has been a significant development in our understanding of the performance of various load balancing policies and their tradeoffs between quantities like user-perceived delay, communication overhead, implementation complexity, and energy consumption; see for example~\cite{lu2011join, weng2020achieving, comte2019dynamic, gupta2019load, anton2020improving, anton2021stability, van2021load, choudhury2021job, panigrahy2022analysis, jonckheere2022generalized, BBLM21} for a few recent, representative works from various research domains. A pivotal methodological tool behind this success has been \emph{mean-field analysis}. The history of mean-field analysis, in its current form, goes back to the foundational works of Kurtz~\cite{Kurtz78, Kurtz70, Kurtz71}, Norman~\cite{Norman74, Norman72} and Barbour~\cite{Barbour80}.
The high-level idea is to represent the system state by aggregate Markovian quantities and characterize their rate of change as the system size grows large. 
In the context of load balancing, this representation is the occupancy process $\bq^N(t) = (q_i^N(t))_{i \geq 1}$, where $q_i^N(t)$ denotes the fraction of servers with queue length at least $i$ in a system with $N$ servers at time $t$. As $N\to\infty$, $\bq^N(t)$ tends to behave like a deterministic, continuous system described by an ordinary differential equation (ODE) that is analytically tractable.
A pivotal assumption for the above scheme to work is that the aggregate quantity $\bq^N(t)$ is Markovian such that its rate of change can be expressed as a function of its current state.
If $\bq^N(t)$ is not Markovian, not only does this technique break down, the mean-field approximation may even turn out to be highly inaccurate.

In load balancing systems, if servers are exchangeable, then $\bq^N(t)$ is indeed Markovian. However, the growing heterogeneity in the types of tasks processed by modern data centers has recently motivated the research community to consider systems beyond the exchangeability assumption.
The main reason stems from \emph{data locality}, i.e., the fact that servers need to store resources to process tasks of a particular type locally and have only limited storage space.
Examples of these resources may include databases or machine learning models specific to particular tasks.
This limits the flexibility of the assignment of a task to a queue, which now needs to ensure that the corresponding server is able to process the assigned task.
In fact, the lack of flexibility also arises in much broader contexts such as due to a spatially constrained network architecture (e.g., in
bike-sharing), see~\cite{Gast15, PVBT22, MT00}, or in the context of geographically distributed data centers~\cite{lu2015geographical, maswood2018cost}.
An emerging line of work thus considers a bipartite graph between task types and servers; see for example~\cite{MBL17, WZS20, TX17, RM22, BMW17, CBL19}.
In this \emph{compatibility} graph, an edge between a server and a task type represents the server's ability to process these tasks.
In this model, if the graph is complete bipartite, then the problem reduces to the classical case of a fully flexible system.
In reality, the storage capacity or geographical constraints forces a server to process only a small subset of all task types, leading to sparser network topologies. 
This motivates the study of load balancing in systems with suitably \emph{sparse} bipartite compatibility graphs.\\

\noindent
\textbf{Fundamental barriers.}
The analysis of sparse systems poses significant challenges, mainly due to the fact that the vector $\bq^N(t)$ is no longer Markovian. In fact, for general graphs, there does not even exist a Markovian state descriptor that is an aggregate quantity such as $\bq^N(t)$, and one needs to keep track of the evolution of the entire system in order to know the instantaneous transition rates.
These barriers are the reason, as noted as early as by Mitzenmacher in his thesis~\cite{Mitzenmacher96}, that a network topology is a ``\emph{very interesting question}... (but) \emph{seems to require different techniques}''.
One key question to understand here is: \emph{Under what conditions on the (sparse) compatibility graph does the system behavior retain the performance benefits (in terms of the queue length behavior) of the fully flexible system?}
From a more foundational standpoint, this is equivalent to understanding how much the validity of the mean-field approximation can be extended to non-trivial graphs.

A few recent works have made successful attempts in analyzing compatibility graphs that possess the proper edge-expansion properties~\cite{RM22, MBL17, WZS20}, of which~\cite{RM22} is most relevant to the current work. Here, the JSQ($d$) policy was considered, where each arriving task joins the shortest of $d$ randomly selected compatible queues. The authors showed that if the graph is `well-connected', the limiting occupancy process is indistinguishable from the fully flexible system both in the transient limit and in steady state.
Even though the well-connectedness condition allows the graph to be sparse, it requires the graph to have strong edge-expansion properties in the following sense: 
\emph{Pick any subset of servers of size $\delta N$ for $\delta>0$ however small. Then, asymptotically, almost all task types should be connected to this set and have a $\delta$ fraction of their compatible servers in that set.}
This condition allows the authors in~\cite{RM22} to ensure that, for any occupancy measure, each task type observes approximately the same queue length distribution within their set of compatible servers. As a result, the evolution of the queue length distribution in any neighborhood happens in the same way and this ensures that, asymptotically, the process evolves in the same way as the fully flexible system. 

\begin{figure}
\centering
\begin{subfigure}[b]{0.4\textwidth}
\centering
\includegraphics[width=\textwidth]{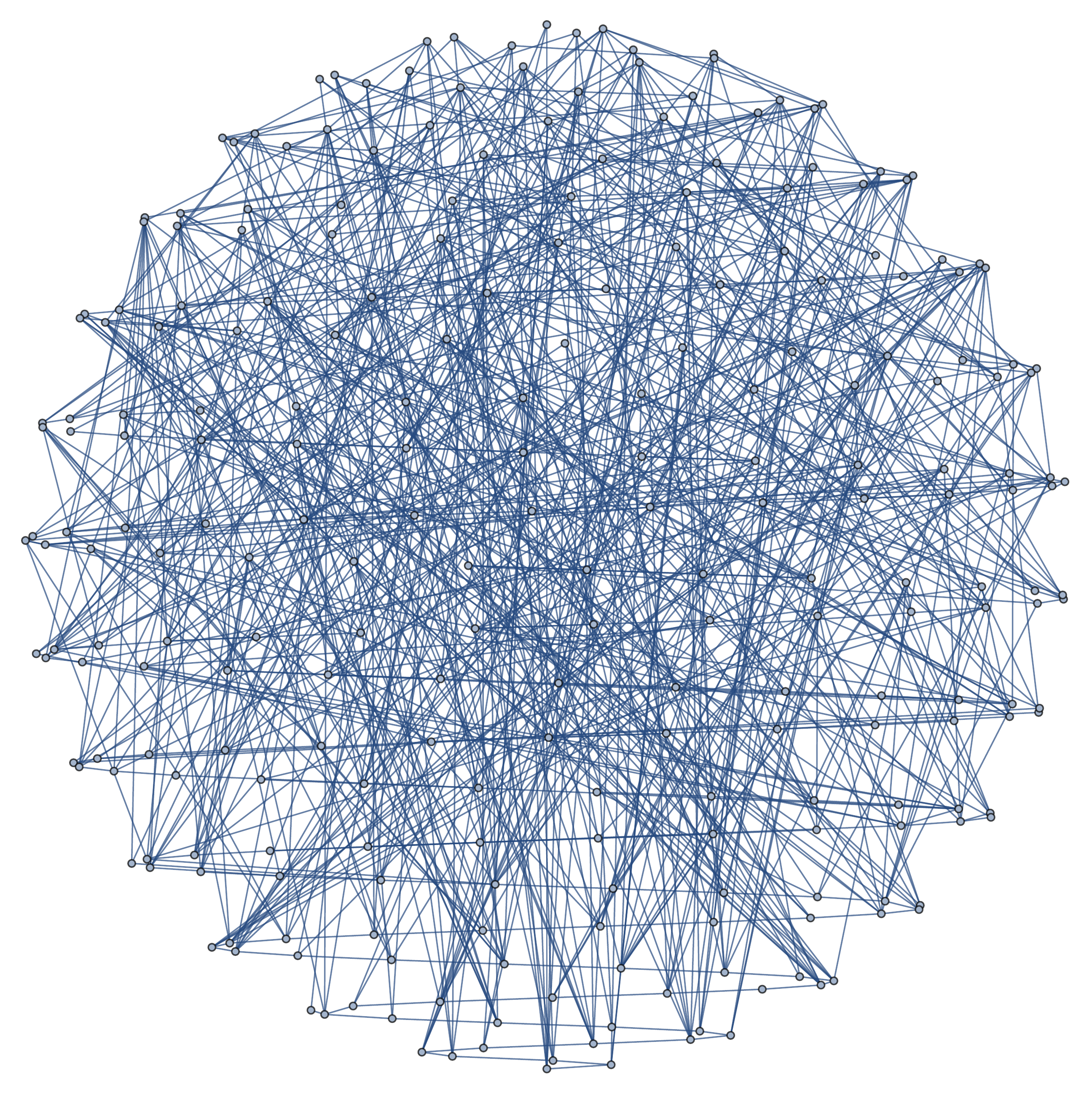}
\caption{Erdős–Rényi graph}
\end{subfigure}\hspace{1cm}
\begin{subfigure}[b]{0.4\textwidth}
\centering
\includegraphics[width=\textwidth]{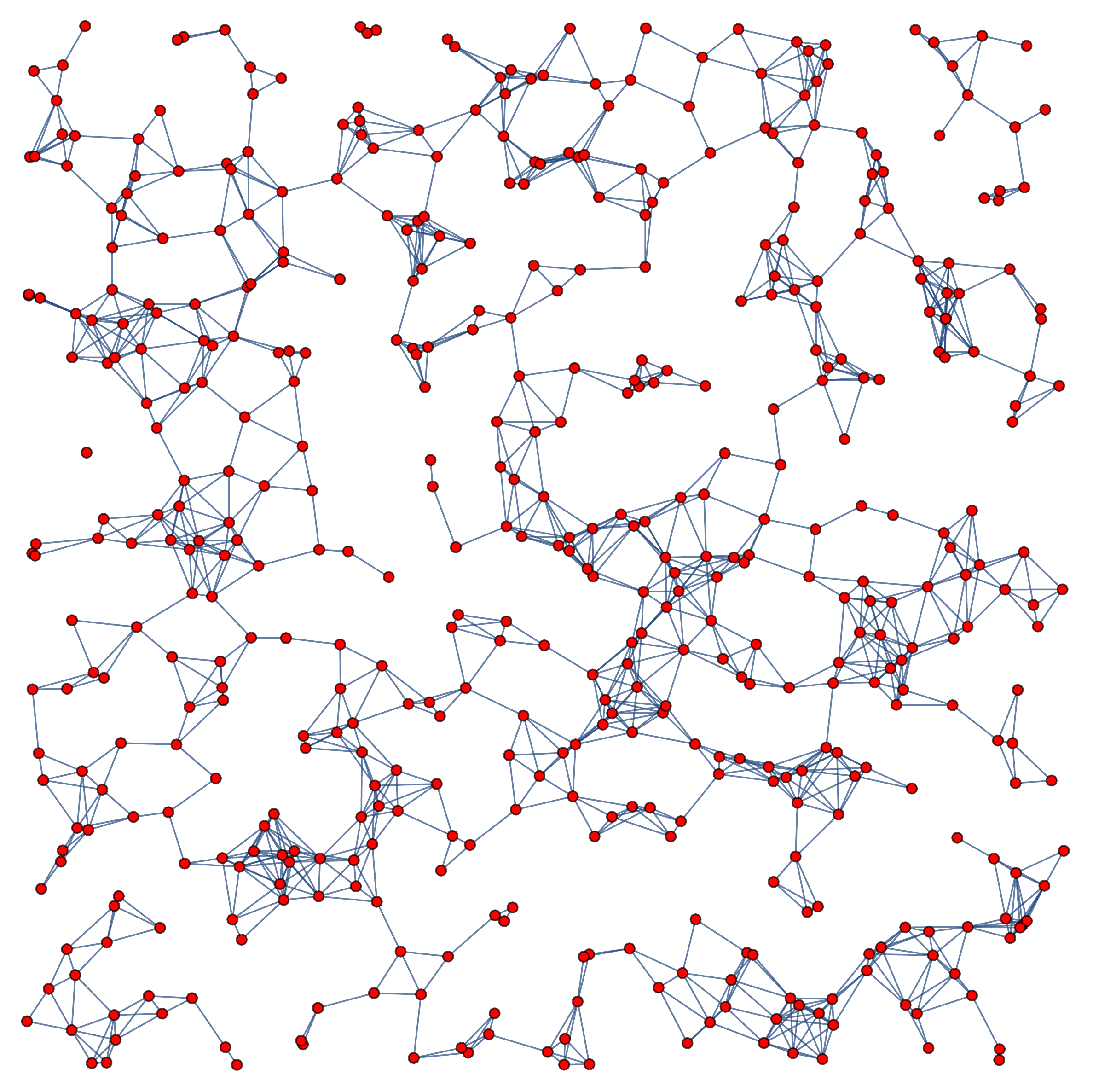}
\caption{Random geometric graph}
\end{subfigure}
\caption{Examples of graph topologies generated by an Erd\H{o}s-R\'enyi graph and a random geometric graph with same average degree $\ln N$, where $N$ is the number of vertices. The picture illustrates the fundamental difference between the nature of global vs.~local connections in the two graphs.}
\end{figure}

The well-connectedness property is not satisfied by spatial graphs such random geometric graphs~\cite{Penrose-book-03}. The edges in a spatial graph are `local', and hence dispatchers in one location cannot assign tasks to servers in spatially distant locations.
However, as already pointed out via numerical simulations in~\cite{RM22}, in steady state, sparse graphs still retain the performance benefits of a fully flexible system, even though the neighborhood coupling based method in~\cite{RM22} fails for these graphs.

Aside from the technical difficulties, there is a fundamental barrier that prevents the mean-field approximation from being applied to spatial graphs. 
This can be understood by a simple counterexample: If all the high queues in the system are located in a small spatial region, then the behavior of the system will be qualitatively different from when they are spread out across the system, as it will take more time for the congestion to disperse throughout the rest of the graph.
In general, in these situations, the behavior of the system in a local neighborhood of the graph may be very different from the global behavior.
Therefore, \emph{one cannot expect the transient behavior of a system with spatial compatibility constraints to coincide with the fully flexible system.}
However, in steady state, it may happen that the situation described in the above counterexample does not occur with high probability, making the steady state still behave like the fully flexible system.
Thus, one needs to characterize the limit of the steady state distribution \emph{without} proceeding via the process-level mean-field limit, as the transient limit will be provably different.
Alternative techniques such as the moment generating function (MGF) method and Lyapunov approaches may allow moment bounds on the steady-state via Stein's method, but cannot commonly be used for the exact characterization of the limit of stationary distributions. 
Although Stein's method has been successfully used for analyzing the join-shortest-queue (JSQ) policy~\cite{WZS20}, these results critically rely on the \emph{state space collapse} or the degeneracy of the steady state, observed as a consequence of JSQ (i.e., all queues are of length zero or one, asymptotically).
When the limit of the stationary distributions is non-degenerate, as is the case in the current paper, we enter uncharted waters in the mean-field approximation literature,
and formalizing a new method to take care of the above difficulties is one of the main contributions of this paper.

\section{Main contributions}\label{sec:main-contrib}

Let $G_N = (V_N, W_N, E_N)$ be a bipartite graph, where $V_N$ denotes the set of servers, $W_N$ denotes the set of task types and $E_N \subseteq V_N \times W_N$ denotes the compatibility constraints. Throughout, we will use the words task-types and dispatchers interchangeably. Here, $N := \lvert V_N \rvert$ equals the number of servers and $M(N) := \lvert W_N \rvert$ equals the number of task types. 
Let $\cN_v := \{ w \in W_N : (v, w) \in E_N \}$ be the compatible task types for a server $v \in V_N$ and $\cN_w := \{ v \in V_N : (v, w) \in  E_N \}$ be the compatible servers for a task type $w \in W_N$. 
Denote $d_v^N = |\cN_v|$ and $d_w^N = |\cN_w|$. 
Tasks of each type arrive as independent Poisson processes of rate $\lambda N/M(N)$ and each task requires an independent and exponentially distributed service time with mean one.
Thus, the total arrival rate is $\lambda N$ and we assume $\lambda<1$ to ensure stability of the system.
If a task arrives at a dispatcher $w \in W_N$, then $d \geq 2$ servers are sampled uniformly at random from $\cN_w$ with replacement, and the task is assigned to the shortest queue among the selected servers, breaking ties at random. 
The tasks in the queue are handled one at a time in first come, first served order.

The criteria for ergodicity of the queue length process for such a system are known and have been developed, for example by Bramson~\cite{Bramson11} and Cardinaels et al.~\cite{CBL19}.
However, in this paper, we work with a slightly stronger, but simplified condition on the graph as follows.
Let 
\begin{equation}\label{eq:stability}
    \rho(G_N) := \max_{v \in V_N} \frac{\lambda N}{M(N)} \sum_{w \in \cN_v} \frac{1}{d_w^N}.
\end{equation}
Using Lyapunov arguments, it is not hard to show that $\rho(G_N)<1$ implies that the queue length process of the system is ergodic for any $d\geq 2$ (Proposition \ref{prop:existence_steadystate}).
Conceptually, $\rho(G_N)$ is the maximum load on a server if each dispatcher uses random routing ($d=1$) and hence it should seem natural that this condition implies stability also for $d\geq 2$.
To avoid heavy-traffic behavior as $N\to\infty$, we will assume that $\rho(G_N)\leq \rho_0$ for all $N\geq 1$ for a constant $\rho_0<1$ throughout.

\begin{remark}\normalfont
In comparison, the stability condition in \cite{CBL19} reduces to: the queue length process is ergodic if for all $w \in W_N$ and $U \subseteq V_N$, there exists a probability distribution $p_{w,U}(\cdot)$ on $U$ such that
\begin{equation}
    \max_{v \in V_N} \frac{\lambda N}{M(N)} \sum_{w \in W_N} \binom{\lvert \cN_w \rvert}{d}^{-1} \sum_{\substack{U \subseteq \cN_w \\ \lvert U \rvert = \min(d, \lvert \cN_w \rvert)}} p_{w,U}(v) < 1.
\end{equation}
However, to prove the mean-field approximation, we require later that $\frac{N}{M(N)} \sum_{w \in \cN_v} \frac{1}{d_w^N} \approx 1$ for all $v \in V_N$ (see the definition of $\gamma(G_N)$ in \eqref{eq:phiandgamma} and Corollary \ref{cor:gen-asymp}) and hence $\rho(G_N) \approx \lambda < 1$ follows immediately. We will therefore work with the simplified stability condition in \eqref{eq:stability}.
\end{remark}

We make contributions on four fronts: (a) We establish bounds on a \emph{large-scale mixing time} of the underlying Markov process; (b) we quantify how much the transient behavior deviates from the mean-field ODE, starting from only empty queues, in terms of certain graph parameters; (c) we combine (a) and (b) to formulate a criterion of when the global quantity $\bq^N(t)$ is asymptotically indistinguishable from the fully flexible system in steady state; and finally
(d) we show how standard generative models for sparse spatial graphs and a large class of sparse regular graphs satisfy this criterion for convergence.\\


\noindent
\textbf{(a)~Large-scale mixing time bounds.}
Mixing time bounds for large-scale systems are known to be hard to obtain. Even without any compatibility constraints, bounding the mixing time for the JSQ($d$) policy for large~$N$ requires significant work~\cite{LM06}.
First, as discussed in~\cite{LM06}, a major challenge is posed by the effect of the starting state. As the state space is infinite, if the system starts from a bad corner of the state space, it may take a very long time to come back to the `regular states', which may even render a mixing time bound useless for our purposes.
Second, in the presence of a compatibility graph structure, regenerative arguments, such as bounding the time the Markov process takes to hit a fixed state~\cite{down1995exponential}, cannot be used either since these regeneration lengths are typically exponential in $N$.
In fact, for large-scale analysis we do not require the conventional notion of mixing time.
Instead, we introduce a notion of \emph{large-scale mixing time} as follows:
starting from a set of suitable states, if we compare the distribution of $\bq^N(t)$ and its steady-state distribution, when can we say that they are `close' in a suitable sense?
Here, it is worth pointing out that, since $\bq^N(t)$ is not a Markov process, by its steady-state distribution we mean the functional $\bq^N(t)$ evaluated on the system in steady state.
We show that this mixing time does not scale with $N$ (Theorem \ref{thm:bound_mixingtime}).
This implies that, starting from the set of suitable states, observing the system  at this mixing time will give us a good approximation of the steady state.
In the above, the set of `suitable states' in particular includes the empty state.
A crucial argument in the proof of Theorem~\ref{thm:bound_mixingtime} relies on a novel stochastic coupling. 
If one copy of the system starts from a state where the queue length at each server is at most the queue length of the corresponding server in another copy of the system, then there exists a stochastic coupling such that this ordering is maintained throughout for any sample path (Proposition \ref{prop:monotone_start}). 
We believe that Proposition \ref{prop:monotone_start} and Theorem \ref{thm:bound_mixingtime} hold for a large class of such monotone systems, which may be of independent interest. \\

\noindent
\textbf{(b)~Process-level limit starting from the empty state.}
As the system quickly converges to the steady-state from any of the set of suitable states, it is sufficient to characterize the sample path of one of these states.
Thus, we next characterize the asymptotics of the sample path of $\bq^N(t)$ starting from a system with only empty queues. Let us introduce two quantities of the underlying graph:
\begin{equation}
\label{eq:phiandgamma}
    \phi(G_N) := \max_{v \in V_N} \left\lvert \frac{N}{M(N)} \sum_{w \in \cN_v} \frac{1}{d_w^N} - 1 \right\rvert\qquad
    \text{ and }\qquad
    \gamma(G_N) := \frac{1}{M(N)} \sum_{w \in W_N} \frac{1}{d_w^N}.
\end{equation}
Loosely speaking, $\phi(G_N)$ quantifies the extent to which the bipartite graph is regular and $\gamma(G_N)$ describes the average inverse degree of the task types.
For example, if $d_w^N = d_{\text{task}}^N$ for all $w \in W_N$ and $d_v^N = d_{\text{server}}^N$ for all $v \in V_N$, then $\phi(G_N) = 0$ and $\gamma(G_N) = 1/d_{\text{task}}^N$ (see also Definition \ref{def:regular}).
We prove that the process-level limit remains close to the system of ODEs for the fully flexible system, in terms of the $\ell_2$-distance, if $\phi(G_N)$ and $\gamma(G_N)$ are suitably small and the system is started in a state that has its queues `\emph{sufficiently spread out}' (Theorem~\ref{thm:process_level_limit}). This in particular includes the state with only empty queues.
Most importantly, \emph{the result in Theorem \ref{thm:process_level_limit} is non-asymptotic.}\\

\noindent
\textbf{(c)~Mean-field approximation.}
Leveraging Theorem \ref{thm:process_level_limit} and the mixing time bound, we determine the applicability of the mean-field approximation for any compatibility graph in terms of the local properties $\phi(G_N)$ and $\gamma(G_N)$. 
In particular, in Theorem \ref{thm:finite_bound} we provide a \emph{finite $N$ guarantee that, for any graph} $G_N$, the $\ell_2$-distance between the steady-state and the fixed point of a system of ODEs is bounded by
\begin{equation}
    \frac{c}{\ln\left( 1 / \max\{ \phi(G_N)^2, \gamma(G_N) \} \right)^\alpha}
\end{equation}
for constants $c, \alpha > 0$ that depend only on $\lambda, \rho_0$, and $d$.
In particular, if $\max\{ \phi(G_N)^2, \gamma(G_N) \}\to 0$ as $N\to\infty$, then
the distribution of $\bq^N(t)$, in steady state, converges weakly to the Dirac delta distribution at the fixed point of the ODE corresponding to the fully flexible system. \\

\noindent
\textbf{(d)~Implications for specific graph classes.}
To show that the conditions on the graph sequence are satisfied by common graphs, we consider 
two sequences of \emph{sparse} graphs for which the condition $\max\{ \phi(G_N)^2, \gamma(G_N) \}\to 0$ as $N\to\infty$ is satisfied.

\textbf{First}, let $(G_N)_{N \geq 1}$ be a sequence of random bipartite geometric graphs. From a high level, these graphs are obtained by placing the dispatchers and the servers at uniformly random locations and connecting a dispatcher and server by an edge if they are at most a fixed distance $r(N)>0$ apart; see Section~\ref{sec:spatialgraph} for a precise definition. 
Recall that $d_v^N$ and $d_w^N$ denote the degree of $v \in V_N$ and $w \in W_N$, respectively.
We prove that, if $r(N)$ is such that $\liminf_{N \to \infty} \E\left[ d_v^N \right] / \ln N = \infty$ and $\liminf_{N \to \infty} \E\left[ d_w^N \right] / \max(\ln M(N), \ln N) = \infty$, then indeed $\max\{ \phi(G_N)^2, \gamma(G_N) \}\to 0$, and $\bq^N(t)$ in steady-state becomes asymptotically indistinguishable from the fully flexible system (Corollary \ref{cor:spatialgraph}). 
Note that these conditions still ensure sparsity in that the degree of a server is nearly a factor $M(N)/\ln N$ smaller as compared to the complete bipartite graph where the degree is $M(N)$.

\textbf{Second}, the above convergence holds in much more generality for a sequence of regular bipartite graphs. That is, $d_v^N$ is the same for all $v$ and $d_w^N$ is the same for all $w$ \emph{within each connected component of the graph}; see Section \ref{sec:regulargraph} for a precise definition.
We prove that the convergence holds whenever $\gamma(G_N) \to 0$, which happens if for example if $\min_{w \in W_N} d_w^N$ diverges (at any rate) as $N \to \infty$ (Corollary \ref{cor:regulargraph}), and thus ensures sparsity. 
This includes \emph{arbitrary deterministic graph sequences} and thus significantly broadens the applicability of the mean-field approximation. 



\section{Main results}

In the following, all graphs will refer to bipartite graphs $G_N = (V_N, W_N, E_N)$ as described in the beginning of Section~\ref{sec:main-contrib}.
We let $X_v(t)$ denote the queue length of a server $v \in V_N$ at time $t$. 
Let $Q_i^N(t) := \sum_{v \in V_N} \mathbbm{1}\left\{ X_v(t) \geq i \right\}$ denote the number of servers with queue length at least $i \in \N$ in the entire system. We will refer to these as \emph{global} quantities. 
The \emph{local} number of servers with queue length at least $i \in \N$, as seen from the perspective of a task type $w \in W_N$, is denoted by $Q_i^{N,w}(t) := \sum_{v \in \cN_w} \mathbbm{1}\left\{ X_v(t) \geq i \right\}$. 
Define their scaled versions as $q_i^N(t) := Q_i^N(t) / N$ and $q_i^{N,w}(t) := Q_i^{N,w}(t) / d_w^N$. 
Note that $\{X_v(t): v\in V_N\}$ is a Markov process, and the vector $(q_i^N(\infty))_{i\geq 1}$ will denote the corresponding steady-state functional of this Markov process.

\subsection{Steady-state approximation for arbitrary graphs}
The JSQ($d$) policy is known for its drastic delay-performance improvement over random routing. 
It is well-known that on a complete bipartite graph with full flexibility, the steady-state quantity $q_i^N(\infty)$ approaches $q_i^*:=\lambda^{\frac{d^i - 1}{d - 1}}$ as $N\to\infty$~\cite{VDK96, Mitzenmacher96}.
This is often referred to as `the power of two effect', meaning that the tail of the queue length distribution decays double-exponentially (in contrast to just exponentially for random routing). 
Recall the definitions of $\phi(G_N)$ and $\gamma(G_N)$ from \eqref{eq:phiandgamma}.
\emph{For an arbitrary compatibility graph $G_N$}, a central result of this paper provides a finite $N$ bound on the 
expected $\ell_2$-distance between $(q_i^N(\infty))_{i\geq 1}$ and $(q_i^*)_{i\geq 1}$:

\begin{theorem}
\label{thm:finite_bound}
Given any $G_N$, if $\rho(G_N) \leq \rho_0 < 1$, then $\boldsymbol{X}(t) = \left( X_v(t) \right)_{v \in V_N}$ is ergodic.
Moreover, if $\max\{ \phi(G_N)^2, \gamma(G_N) \} \leq 1$, then there exist constants $c, \alpha > 0$ (depending only on $\lambda$, $\rho_0$ and $d$) such that
\begin{equation}
    \sum_{i = 1}^\infty \E\left[ \left( q_i^N(\infty) - q_i^* \right)^2 \right]
    \leq \frac{c}{\ln\left( 1 / \max\{ \phi(G_N)^2, \gamma(G_N \} \right)^\alpha},
\end{equation}
where $q_i^* = \lambda^{\frac{d^i - 1}{d - 1}}$ for $i \in \N$.
\end{theorem}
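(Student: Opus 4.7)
Ergodicity is immediate from Proposition \ref{prop:existence_steadystate}, so I focus on the quantitative $\ell_2$-bound. The plan is a triangle-inequality decomposition through the fully-flexible mean-field ODE. Let $\bq(t)$ denote the solution, starting from the empty state, of the classical fully-flexible JSQ($d$) mean-field ODE, whose unique globally attracting fixed point is $\bq^*$. Let $\bq^N(T)$ be the occupancy process of the system of interest, also started from the empty state. For any $T>0$,
\begin{equation*}
  \bigl\| \bq^N(\infty) - \bq^* \bigr\|_{\ell_2}
  \;\le\; \bigl\| \bq^N(\infty) - \bq^N(T) \bigr\|_{\ell_2}
  \;+\; \bigl\| \bq^N(T) - \bq(T) \bigr\|_{\ell_2}
  \;+\; \bigl\| \bq(T) - \bq^* \bigr\|_{\ell_2}.
\end{equation*}

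Each of the three terms is controlled by a separate ingredient already at our disposal. For the first term, the large-scale mixing-time bound (Theorem \ref{thm:bound_mixingtime}) applies because the empty state lies in the set of designated suitable initial conditions; the monotone coupling of Proposition \ref{prop:monotone_start} lets me sandwich the stationary system between copies started from empty and from a suitable dominating configuration, converting the distributional mixing statement into an expected $\ell_2$-distance bound that decays in $T$ at a rate independent of $N$. For the second term, I invoke Theorem \ref{thm:process_level_limit} with the empty initial condition (which qualifies as \emph{sufficiently spread out}), producing a non-asymptotic estimate of the form $F(T)\cdot H(\phi(G_N),\gamma(G_N))$, with $H(\phi,\gamma)\to 0$ as $\phi,\gamma\to 0$ and $F$ increasing in $T$. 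For the third term, exponential stability of the fully-flexible JSQ($d$) ODE about $\bq^*$ — which follows from a direct Lyapunov computation exploiting the recursive form of the drift together with $\lambda<1$ — gives $\|\bq(T)-\bq^*\|_{\ell_2}\le Ce^{-\mu T}$ for some $\mu=\mu(\lambda)>0$.

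To extract the rate stated in the theorem, I optimize $T$ as a function of $\varepsilon:=\max\{\phi(G_N)^2,\gamma(G_N)\}$ so as to balance the process-level and ODE-stability bounds; the mixing bound then contributes only a subdominant error once $T$ exceeds the $\varepsilon$-independent mixing threshold. The logarithmic rate $\ln(1/\varepsilon)^{-\alpha}$ reflects that $F(T)$ grows much faster than exponentially in $T$: coordinate-$i$ errors propagate to deeper coordinates through the recursive structure of the drift with multiplicative accumulation via iterated Gronwall, and the deepest significant level $I(T)$ at time $T$ itself grows with $T$. Choosing $T$ to be a slowly growing function of $\ln(1/\varepsilon)$ makes the dominant terms comparable to $\ln(1/\varepsilon)^{-\alpha}$ for an $\alpha=\alpha(\lambda,\rho_0,d)>0$. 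The remaining issue of making sense of the infinite $\ell_2$-sum (as opposed to the $F(T)$ blow-up, which concerns finitely many levels at a time) is handled by a separate tail truncation at some level $I$: $\sum_{i>I}\E[q_i^N(\infty)^2]$ is bounded uniformly in $N$ by standard stochastic-dominance/moment estimates using $\rho(G_N)\le\rho_0<1$, and $\sum_{i>I}(q_i^*)^2$ decays double-exponentially in $I$, so both tails are absorbed into the main estimate.

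The main obstacle is the sharp quantification of $F(T)$ in step (ii): tracking how the per-coordinate process-level error propagates through the infinite-dimensional system with explicit constants requires a careful induction over queue-length levels, and is compounded by the fact that $\bq^N(\cdot)$ is not itself a Markov process — so one cannot simply push forward a distance through a single semigroup step. A secondary difficulty is in step (i): because $\bq^N$ is not Markov, converting the mixing bound into an expected $\ell_2$-bound must be done on the full server-level processes $\bX(\cdot)$ via Proposition \ref{prop:monotone_start}, rather than on the aggregates $\bq^N$ directly.
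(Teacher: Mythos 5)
Your proposal follows essentially the same strategy as the paper: a three-term triangle decomposition through the fully-flexible ODE solution at a finite time $T$, with the large-scale mixing bound (Theorem~\ref{thm:bound_mixingtime} via the server-level coupling of Proposition~\ref{prop:monotone_start}) controlling the first term, the non-asymptotic process-level bound (Theorem~\ref{thm:process_level_limit} from the empty state) controlling the second, exponential stability of the mean-field ODE controlling the third, and then choosing $T\sim\sqrt{\ln(1/\varepsilon)}$ to balance the $e^{cT^2}$ growth against the decaying terms. Two small inaccuracies that do not affect the argument: the $e^{cT^2}$ growth in Theorem~\ref{thm:process_level_limit} arises from a single Cauchy--Schwarz step (which introduces an extra factor of $t$ into the Gr\"onwall integrand, giving $e^{c t\cdot t}$) applied to the whole $\ell_2$ sum at once — not from an iterated coordinate-by-coordinate Gr\"onwall — and the separate tail-truncation of the $\ell_2$ sum you describe is unnecessary, since Theorem~\ref{thm:process_level_limit} (via Lemma~\ref{lem:martingale_vanishes}) already controls the full infinite sum, and the mixing term passes from an $\ell_1$ to an $\ell_2$ bound simply because $\lvert q_i^{(2)}-q_i^{(1)}\rvert\le 1$.
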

Theorem~\ref{thm:finite_bound} is proved in Section~\ref{ssec:steady-state-proof}.
For large $N$ asymptotics, it also provides a rate of convergence, although we do not expect this rate to be tight for specific sequences of graphs, as the result holds for arbitrary graphs.
The following is an immediate corollary.
\begin{corollary}\label{cor:gen-asymp}
Let $(G_N)_{N\geq 1}$ be a sequence of graphs with $\rho(G_N)\leq \rho_0 < 1$ for all $N \geq 1$ and assume $\max\{ \phi(G_N)^2, \gamma(G_N)\}\to 0$ as $N\to \infty$. Then 
$\sum_{i = 1}^\infty \E\big[ \left( q_i^N(\infty) - q_i^* \right)^2 \big] \to 0$ as $N\to \infty$.
\end{corollary}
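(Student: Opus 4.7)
The plan is to apply Theorem~\ref{thm:finite_bound} to each $G_N$ in the sequence and take the limit as $N\to\infty$. The hypothesis $\rho(G_N)\leq \rho_0<1$ for all $N$ immediately triggers the first half of Theorem~\ref{thm:finite_bound}, which guarantees ergodicity of $\boldsymbol{X}(t)$ for every $N$ and thus ensures that the steady-state functional $q_i^N(\infty)$ is well-defined.

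Next, since $\max\{\phi(G_N)^2,\gamma(G_N)\}\to 0$ as $N\to\infty$, there exists $N_0$ such that $\max\{\phi(G_N)^2,\gamma(G_N)\}\leq 1$ for all $N\geq N_0$. This is precisely the activation condition for the quantitative part of Theorem~\ref{thm:finite_bound}, which for such $N$ yields
\[
0 \;\leq\; \sum_{i=1}^\infty \E\!\left[ \left( q_i^N(\infty) - q_i^* \right)^2 \right] \;\leq\; \frac{c}{\ln\!\left( 1/\max\{\phi(G_N)^2,\gamma(G_N)\} \right)^\alpha},
\]
where the constants $c,\alpha>0$ depend only on $\lambda$, $\rho_0$ and $d$, and hence are independent of $N$. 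As $\max\{\phi(G_N)^2,\gamma(G_N)\}\to 0$, the argument of the logarithm diverges to $+\infty$, so the right-hand side tends to $0$, and sandwiching gives the claim.

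There is really no obstacle at this step: the corollary is a direct reading-off of the finite-$N$ bound. All the substantive work is concentrated in Theorem~\ref{thm:finite_bound}, which in turn is assembled from the large-scale mixing time bound (Theorem~\ref{thm:bound_mixingtime}) and the process-level deviation estimate starting from empty queues (Theorem~\ref{thm:process_level_limit}). The only thing one must verify here is that the hypothesis $\max\{\phi(G_N)^2,\gamma(G_N)\}\to 0$ is strong enough to activate both halves of Theorem~\ref{thm:finite_bound} for all sufficiently large $N$ and to force the upper bound to vanish, both of which are immediate. No truncation of the infinite sum is needed because Theorem~\ref{thm:finite_bound} already bounds the full series.
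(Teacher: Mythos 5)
Your proof is correct and matches the paper's intent exactly: the paper labels this an ``immediate corollary'' of Theorem~\ref{thm:finite_bound}, and your argument---invoke the finite-$N$ bound once $\max\{\phi(G_N)^2,\gamma(G_N)\}$ drops below $1$, note the constants are $N$-independent, and let the logarithm diverge---is precisely the intended reading. One small nicety: you want $\max\{\phi(G_N)^2,\gamma(G_N)\}$ \emph{strictly} less than $1$ so the logarithm is positive (at exactly $1$ the bound is vacuous), but since the quantity tends to $0$ this is automatic for all large $N$.
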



\begin{remark}\normalfont
It is worthwhile to note that Theorem~\ref{thm:finite_bound} extends to a bound on any $\ell_p$-distance for $0 < p < \infty$. This follows by bounding the tail sum using Corollary \ref{cor:bound_steadystate} and bounding the finite remainder using H\"older's inequality.
\end{remark}

\subsection{Convergence for specific graph sequences}
\label{sec:spatialgraph}\label{sec:regulargraph}
Let us now discuss two important classes of graph sequences that satisfy the mean-field approximation conditions in Corollary~\ref{cor:gen-asymp}. 
We begin with a popular generative model for spatial graphs.

\begin{definition}[Random bipartite geometric graph]
We say that $G_N$ is a random bipartite geometric graph if $G_N$ is constructed as follows. Let $r(N) > 0$ be fixed and $A$ be the unit torus. 
We assign each $v \in V_N$ and $w \in W_N$ a location $x_v \in A$ and $x_w \in A$, respectively, independently and uniformly at random. Next, $(v, w) \in E_N$ if and only if $\lVert x_v - x_w \rVert_p \leq r(N)$ for $0 < p < \infty$.
\end{definition}

We define random geometric graphs on a torus to avoid boundary effects. From a practical perspective, however, the boundary effects become negligible as $N\to\infty$.
For the following theorem, recall that $d_v^N$ and $d_w^N$ denote the degrees of $v \in V_N$ and $w \in W_N$ in $G_N$, respectively.

\begin{corollary}
\label{cor:spatialgraph}
Let $\left( G_N \right)_{N \geq 1}$ be a sequence of random bipartite geometric graphs, where $r(N)$ is chosen such that
\begin{equation}
    \liminf_{N \to \infty} \frac{\E\left[ d_v^N \right]}{\ln N} = \infty, \quad
    \liminf_{N \to \infty} \frac{\E\left[ d_w^N \right]}{\max(\ln M(N), \ln N)} = \infty.
\end{equation}
Then, almost surely for any realization of the graph sequence $\left( G_N \right)_{N \geq 1}$, $\boldsymbol{X}(t) = \left( X_v(t) \right)_{v \in V}$ is ergodic for all $N$ large enough and $\sum_{i = 1}^\infty \E\big[ \left( q_i^N(\infty) - q_i^* \right)^2 \big] \to 0$ as $N \to \infty$,
where $q_i^* = \lambda^{\frac{d^i - 1}{d - 1}}$ for $i \in \N$.
\end{corollary}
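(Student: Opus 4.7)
The plan is to reduce Corollary~\ref{cor:spatialgraph} to Corollary~\ref{cor:gen-asymp} by showing that, almost surely over the random placement of vertices, the hypotheses $\rho(G_N)\leq \rho_0<1$ and $\max\{\phi(G_N)^2,\gamma(G_N)\}\to 0$ hold for some deterministic $\rho_0$ and all $N$ sufficiently large. Ergodicity and the $\ell_2$ convergence in the corollary will then be immediate from Theorem~\ref{thm:finite_bound}. The entire task therefore reduces to establishing sharp enough almost-sure control on the degree sequence.

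The key probabilistic ingredient is translation invariance of the torus: the probability $p_N$ that two independent uniform points on $A$ lie within $\ell_p$-distance $r(N)$ depends only on $r(N)$, not on the reference point. Hence, conditionally on $x_v$, the degree $d_v^N$ is Binomial$(M(N),p_N)$ with mean $M(N)p_N=\E[d_v^N]$, and conditionally on $x_w$, the degree $d_w^N$ is Binomial$(N,p_N)$ with mean $Np_N=\E[d_w^N]$. Multiplicative Chernoff bounds give, for each $\epsilon\in(0,1)$, deviation probabilities that decay exponentially in $M(N)p_N$ and $Np_N$ respectively. A union bound over the $N$ servers and $M(N)$ task types, combined with the hypotheses $\liminf \E[d_v^N]/\ln N=\infty$ and $\liminf \E[d_w^N]/\max(\ln M(N),\ln N)=\infty$, makes the probability of the bad event summable in $N$. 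By Borel--Cantelli, there is a full-measure event on which, for every rational $\epsilon>0$, two-sided uniform degree concentration $(1-\epsilon)\E[d_\ast^N]\le d_\ast^N\le (1+\epsilon)\E[d_\ast^N]$ holds for all sufficiently large $N$.

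On this almost-sure event the three graph parameters are controlled by deterministic manipulations. First, $\gamma(G_N)=\frac{1}{M(N)}\sum_{w\in W_N}\frac{1}{d_w^N}\le \frac{1}{(1-\epsilon)Np_N}$, which vanishes since $Np_N=\E[d_w^N]\to\infty$. Second, for $\phi(G_N)$ one combines concentration of the sum length $|\cN_v|=d_v^N$ around $M(N)p_N$ with the pointwise bound $(1\pm\epsilon)Np_N$ on each summand $1/d_w^N$, yielding
\begin{equation*}
\frac{N}{M(N)}\sum_{w\in\cN_v}\frac{1}{d_w^N}\in\left[\tfrac{1-\epsilon}{1+\epsilon},\ \tfrac{1+\epsilon}{1-\epsilon}\right]
\end{equation*}
uniformly in $v$, so $\phi(G_N)\to 0$ by sending $\epsilon$ along a rational sequence. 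Third, from~\eqref{eq:stability} one reads $\rho(G_N)\le \lambda(1+\phi(G_N))$, so $\rho(G_N)\le\rho_0:=(1+\lambda)/2<1$ for all sufficiently large $N$. All hypotheses of Corollary~\ref{cor:gen-asymp} are therefore met on a full-measure event.

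The main obstacle is the passage from fixed-$N$ concentration statements to a single almost-sure statement holding simultaneously for all large $N$ and for every sufficiently small $\epsilon$. This is precisely where the logarithmic lower bounds on expected degrees are sharp: one needs $N\exp(-c_\epsilon M(N)p_N)$ and $M(N)\exp(-c_\epsilon Np_N)$ to be summable in $N$, which forces $\E[d_v^N]$ to dominate $\ln N$ and $\E[d_w^N]$ to dominate $\max(\ln M(N),\ln N)$. Once this summability is in hand, Borel--Cantelli and a countable intersection over $\epsilon\downarrow 0$ deliver the required uniform-in-$v,w$ concentration, and the rest of the proof is routine deterministic estimation fed into Theorem~\ref{thm:finite_bound}.
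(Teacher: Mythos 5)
Your proposal is correct and follows essentially the same route as the paper: use translation invariance of the torus to conclude both degree sequences are binomial, apply multiplicative Chernoff concentration, take a union bound over the $N+M(N)$ vertices (with summability of the failure probability guaranteed exactly by the hypotheses $\E[d_v^N]\gg\ln N$ and $\E[d_w^N]\gg\max(\ln M(N),\ln N)$), invoke Borel--Cantelli, and then bound $\gamma(G_N)$, $\phi(G_N)$, and $\rho(G_N)$ deterministically on the resulting full-measure event before appealing to Theorem~\ref{thm:finite_bound}/Corollary~\ref{cor:gen-asymp}. One minor remark: you are slightly more careful than the paper's written proof in making explicit the countable intersection over a rational sequence $\epsilon\downarrow 0$ needed to conclude $\phi(G_N)\to 0$ rather than merely $\phi(G_N)\leq 4\epsilon$ eventually for a single fixed $\epsilon$; the paper leaves this implicit, but the content is the same.
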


\begin{remark}\normalfont
The reason that `for all $N$ large enough' is added in Corollary~\ref{cor:spatialgraph} is that, for any fixed $N$, with (small but) positive probability, the random graph may not satisfy the stability criterion.
As $N\to\infty$, this probability becomes small and using the Borel-Cantelli lemma, we show that the stability criterion is satisfied almost surely for all $N$ large enough. 
To be precise, the convergence statement for $(q_i^N(\infty))_{i\geq 1}$ should be interpreted for all $N$ large enough where the ergodicity holds.
\end{remark}
The proof relies on verifying the conditions of Corollary~\ref{cor:gen-asymp} using concentration of measure arguments and is given in Section~\ref{ssec:random-verification}.
Next, we consider sequences of regular graphs, in which case, we can allow much more general sequences of graphs.


\begin{definition}[Regular bipartite graph]\label{def:regular}
We say that $G_N$ is a regular bipartite graph if $(v, w) \in E_N$ implies that $N d_v^N = M(N) d_w^N$.
\end{definition}
Note that the definition of a regular bipartite graph implies that the degrees of all servers and all dispatchers are the same \emph{within every connected component}, and it allows the graph to have many connected components.
The next theorem proves the convergence of the steady state for any such regular bipartite graphs with diverging minimum dispatcher degree.

\begin{corollary}
\label{cor:regulargraph}
Let $\left( G_N \right)_{N \geq 1}$ be a sequence of regular bipartite graphs, where $\gamma(G_N) \to 0$ as $N\to\infty$. Then, the queue length process is ergodic for all $N \geq 1$ and $\sum_{i = 1}^\infty \E\big[ \left( q_i^N(\infty) - q_i^* \right)^2 \big] \to 0$ as $N \to \infty$, where $q_i^* = \lambda^{\frac{d^i - 1}{d - 1}}$ for $i \in \N$. The convergence holds in particular if $\min_{w \in W_N} d^N_w\to \infty$ as $N\to\infty$.
\end{corollary}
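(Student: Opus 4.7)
The plan is to verify the hypotheses of Corollary~\ref{cor:gen-asymp} for the sequence $(G_N)_{N\geq 1}$; then the conclusion follows directly from that corollary. The key observation is that the regularity condition $N d_v^N = M(N) d_w^N$ whenever $(v,w)\in E_N$ will force $\phi(G_N)=0$ identically, so the expression $\max\{\phi(G_N)^2,\gamma(G_N)\}$ collapses to $\gamma(G_N)$, which tends to zero by assumption.

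First, I will compute $\phi(G_N)$ by direct substitution. Fix any server $v\in V_N$. For each neighbor $w\in\cN_v$, the defining relation gives $1/d_w^N = M(N)/(N d_v^N)$. Summing over the $d_v^N$ neighbors,
\[
\frac{N}{M(N)}\sum_{w\in\cN_v}\frac{1}{d_w^N}
= \frac{N}{M(N)}\cdot d_v^N\cdot\frac{M(N)}{N d_v^N}
= 1,
\]
and taking the maximum over $v$ yields $\phi(G_N)=0$. Substituting the same identity into definition~\eqref{eq:stability} gives $\rho(G_N)=\lambda$ for every $N$, so the bound $\rho_0=\lambda<1$ holds uniformly; by the first part of Theorem~\ref{thm:finite_bound} this establishes ergodicity of $\boldsymbol{X}(t)$ for every $N\geq 1$.

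With $\phi(G_N)=0$ for every $N$ and $\gamma(G_N)\to 0$ by hypothesis, Corollary~\ref{cor:gen-asymp} applies and delivers the desired convergence $\sum_{i=1}^\infty \E[(q_i^N(\infty)-q_i^*)^2]\to 0$. The final sentence of the corollary is then an elementary bound: since
\[
\gamma(G_N) = \frac{1}{M(N)}\sum_{w\in W_N}\frac{1}{d_w^N}\leq \frac{1}{\min_{w\in W_N} d_w^N},
\]
the condition $\min_{w\in W_N} d_w^N\to\infty$ forces $\gamma(G_N)\to 0$ and reduces to the hypothesis just verified.

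There is essentially no technical obstacle: the proof is a bookkeeping exercise showing that regularity nullifies the inhomogeneity term in Corollary~\ref{cor:gen-asymp}. All the analytic difficulty has already been absorbed into Theorem~\ref{thm:finite_bound} (and hence Corollary~\ref{cor:gen-asymp}); the role of the regular structure here is purely to eliminate the local-imbalance penalty measured by $\phi(G_N)$, so that the applicability of the mean-field approximation is governed solely by the average inverse-degree statistic $\gamma(G_N)$.
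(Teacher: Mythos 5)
Your proof is correct and follows the same route as the paper: substitute the regularity identity $1/d_w^N = M(N)/(N d_v^N)$ to get $\phi(G_N)=0$ and $\rho(G_N)=\lambda<1$, invoke Corollary~\ref{cor:gen-asymp}, and close with the bound $\gamma(G_N)\leq 1/\min_{w\in W_N} d_w^N$. No differences worth noting.
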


\begin{proof}
The proof is immediate by observing that, 
due to the regularity of $G_N$, we have 
\begin{equation}
    \phi(G_N)
    := \max_{v \in V_N} \left\lvert \frac{N}{M(N)} \sum_{w \in \cN_v} \frac{1}{d_w^N} - 1 \right\rvert
    = \max_{v \in V_N} \left\lvert \frac{N}{M(N)} \sum_{w \in \cN_v} \frac{M(N)}{N d_v^N} - 1 \right\rvert
    = 0.
\end{equation}
Note also that $\rho(G_N) \leq \lambda (1 + \phi(G_N)) = \lambda < 1$. Therefore, Corollary~\ref{cor:gen-asymp} completes the proof of the first part.
The second part is proved by observing that $\gamma(G_N) \leq  1/(\min_{w \in W_N} d^N_w)$. 
\end{proof}

The rest of the contributions will be pivotal in the proof of Theorem~\ref{thm:finite_bound}.

\subsection{Large-scale mixing-time bound}

A crucial step in identifying the steady-state distribution is to show that the distribution of $\bq^N(t)$ becomes close to its steady state within a large, but finite time. 
We prove that, in appropriate sense, the Markov process mixes in polynomial time, independent of $N$, from any state that is stochastically dominated by the steady-state. 
We use the following notion of stochastic ordering:
\begin{definition}[Stochastic ordering]
For $n \in \N$, let $\bX = (X_1, \ldots, X_n)$ and $Y = (Y_1,\ldots, Y_n)$ be two $n$-dimensional random variables. We write $\bX\leq_{st}\bY$ if there exists a common probability space where $X_i \leq Y_i$ for all $i = 1,\ldots, n$, almost surely.
\end{definition}
To formalize the notion of large-scale mixing time, consider two copies of the system on the same graph $G_N$.
For system $k$, with $k =1,2$, the queue length at server $v\in V_N$ is denoted by $X_v^{(k)}(t)$ and the fraction of servers with queue length at least $i \in \N$ is denoted by $q_i^{N, (k)}(t)$.

\begin{theorem}
\label{thm:bound_mixingtime}
Let $G_N$ be a graph, $\rho(G_N)\leq \rho_0<1$ and $\boldsymbol{X}_0$ be a random variable on $\N^{N}$ such that $\boldsymbol{X}_0\leq_{st} \boldsymbol{X}(\infty)$. 
Suppose $\boldsymbol{X}^{(1)}(0) \overset{d}{=} \boldsymbol{X}_0$ and $\boldsymbol{X}^{(2)}(0) \overset{d}{=} \boldsymbol{X}^{(2)}(\infty)$.
Then there exist a joint probability space and constants $c_1, c_2 > 0$, $0 < \alpha \leq 1$ (depending only on $\rho_0$ and $d$) such that, for all $t \geq 0$,
\begin{equation}
    \sum_{i = 1}^\infty \E\Big[ \Big\lvert q_i^{N,(2)}(t) - q_i^{N,(1)}(t) \Big\rvert \Big] \leq \frac{1}{\left( c_1 + c_2 t \right)^\alpha}.
\end{equation}
\end{theorem}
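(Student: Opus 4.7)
The plan is to combine the sample-path monotone coupling from Proposition~\ref{prop:monotone_start} with a drift analysis of the total excess workload. Since $\bX_0 \leq_{st} \bX(\infty)$, I first realize $\bX^{(1)}(0)\leq \bX^{(2)}(0)$ componentwise on a common probability space, and then invoke Proposition~\ref{prop:monotone_start} to extend this to a coupling under which $\bX^{(1)}(t)\leq \bX^{(2)}(t)$ componentwise for every $t\geq 0$, almost surely. This immediately yields $q_i^{N,(1)}(t)\leq q_i^{N,(2)}(t)$ for all $i$ and $t$, removes the absolute values, and telescopes to
\[
    \sum_{i=1}^\infty \E\Big[\Big|q_i^{N,(2)}(t)-q_i^{N,(1)}(t)\Big|\Big] = \frac{\E[D(t)]}{N}, \qquad D(t) := \sum_{v\in V_N}\bigl(X_v^{(2)}(t)-X_v^{(1)}(t)\bigr),
\]
so the theorem reduces to establishing polynomial decay of $V(t):=\E[D(t)]/N$.

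Next, the drift of $\E[D(t)]$ follows directly from the coupled dynamics: each arrival adds one task to each copy and so keeps $D$ invariant, while a service at server $v$ reduces $D$ by exactly $1$ precisely when $X_v^{(1)}(t-)=0$ and $X_v^{(2)}(t-)>0$. Combined with the stationarity of copy~$(2)$, which forces $\E[Q_1^{N,(2)}(t)]=\lambda N$ for all $t$, this yields the exact identity
\[
    \frac{d}{dt}\E[D(t)] = -\E[D_1(t)], \qquad D_1(t) := Q_1^{N,(2)}(t)-Q_1^{N,(1)}(t)\geq 0,
\]
and in particular $V$ is non-increasing. The core analytic step is then a self-improving inequality $\E[D_1(t)]/N \geq c_0\, V(t)^{1+1/\alpha}$ for constants $c_0,\alpha>0$ depending only on $\rho_0$ and $d$. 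To obtain it, I would decompose $D=\sum_v E_v$ with $E_v := X_v^{(2)}-X_v^{(1)}\geq 0$ and truncate at a level $L\geq 1$: the high part $\sum_v E_v\mathbbm{1}\{X_v^{(2)}>L\}$ is controlled in expectation by the steady-state tail estimate of Corollary~\ref{cor:bound_steadystate}, while the low part is bounded by $L$ times the number of servers carrying positive discrepancy, which one relates to $D_1$ via monotonicity combined with an amortization argument that converts ``intermediate'' servers $\{1\leq X_v^{(1)}<X_v^{(2)}\}$ into $D_1$-events after bounded-time service completions. Optimizing in $L$ yields the lower bound, and integrating the differential inequality $\dot V \leq -c_0 V^{1+1/\alpha}$ gives $V(t)\leq 1/(V(0)^{-1/\alpha}+c_0 t/\alpha)^\alpha$; using $V(0)\leq \E[X_v^{(2)}(\infty)]\leq C(\rho_0,d)$ from Corollary~\ref{cor:bound_steadystate} absorbs the initial value into constants $c_1,c_2$ of the required form.

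The main obstacle is the self-improving inequality for $\E[D_1]$. The subtlety is that $V$ can be sizable even when $D_1$ is small: the excess mass can accumulate on intermediate servers with $X_v^{(1)}\geq 1$ that do not participate in the instantaneous decrease $D_1$. Controlling this requires more than a pointwise comparison — one must show that such intermediate discrepancy is either small in total (via the steady-state tail bound on $X_v^{(2)}(\infty)$, which also dominates $X_v^{(1)}$ by monotonicity) or is rapidly promoted into the $D_1$ set through the coupled service dynamics. Pushing this amortization through with explicit constants that are uniform in $N$ is the main technical labor of the proof, and determines the exponent $\alpha\in (0,1]$ appearing in the final bound.
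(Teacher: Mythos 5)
Your first step — building the monotone coupling via Proposition~\ref{prop:monotone_start}, dropping absolute values, and observing that $\sum_i\E\bigl[q_i^{(2)}(t)-q_i^{(1)}(t)\bigr]=\E[D(t)]/N$ — matches the paper, and your exact drift identity $\frac{d}{dt}\E[D(t)]=-\E[D_1(t)]$ with $D_1=Q_1^{(2)}-Q_1^{(1)}$ is correct (it follows by summing Lemma~\ref{lem:derivative_qsum} over $i$, since $q_0^w\equiv 1$). After that, however, there is a genuine gap. Your plan hinges on a self-improving inequality of the form $\E[D_1(t)]/N\geq c_0\,(\E[D(t)]/N)^{1+1/\alpha}$, but the unweighted Lyapunov function $D=N\sum_i\Delta_i$ has a drift that sees only the level-$1$ discrepancy $\Delta_1$, and $\Delta_1$ does not control $\sum_i\Delta_i$: if, say, $X_v^{(1)}(t)=1$ and $X_v^{(2)}(t)=2$ for every $v$, then $\Delta_1(t)=0$ while $\sum_i\Delta_i(t)=1$. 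So the inequality is false pointwise, and it is not rescued by the hypotheses of the theorem, which only constrain the distributions at time~$0$. Your ``amortization'' sketch — that intermediate servers will be promoted into the $D_1$ set after bounded-time service completions — is an acknowledgment that the argument must be dynamic rather than instantaneous, but a dynamic argument of that type does not close as a differential inequality $\dot V\leq -c_0 V^{1+1/\alpha}$ in the single scalar $V(t)=\E[D(t)]/N$; you would need to track how the discrepancy distribution spreads across levels, which is exactly what your state descriptor has thrown away.

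The paper circumvents this precisely by choosing a \emph{geometrically weighted} Lyapunov function $V(t)=\sum_{i\geq 1}\theta^i\sum_{j\geq i}\E[\Delta_j(t)]$ with $\theta=\min(1/(2\rho_0 d),\rho_0)<1$. Three features of this choice do the work your sketch is missing. First, its drift sees all levels: using Lemma~\ref{lem:derivative_qsum} level by level and the mean value theorem, the arrival gain at level $i$ is at most $\rho_0 d\,\Delta_{i-1}$, and the geometric weight makes the resulting sum a strict fraction of the departure loss, yielding $\frac{d}{dt}\E[V(t)]\leq -\tfrac12\sum_i\theta^i\E[\Delta_i(t)]$. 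Second, one still must compare the weighted sum $\sum_i\theta^i\E[\Delta_i]$ to the unweighted $\sum_i\E[\Delta_i]$ (equivalently to $\E[V]$), and this is exactly where your truncation intuition reappears — but executed via an explicit LP primal–dual argument using the steady-state tail bound $\E[\Delta_i]\leq\rho_0^i$ (Corollary~\ref{cor:bound_steadystate}), which produces the power loss $\bigl(\sum_i\E[\Delta_i]\bigr)^{1+\alpha}$ with $\alpha=\ln\theta/\ln\rho_0\geq 1$. Third, integrating the resulting $\dot{\E[V]}\leq -c_1\E[V]^{1+\alpha}$ gives the polynomial decay, and the initial value $\E[V(0)]$ is bounded by a constant again via Corollary~\ref{cor:bound_steadystate}. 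In short: the monotone coupling and the reduction to $\E[D]/N$ are the same in both proofs, and your truncation-plus-tail-bound intuition is genuinely the right mechanism, but it has to be applied to a weighted discrepancy whose drift actually sees every level — the unweighted $D$ is too coarse for the argument to close.
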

The proof is given in Section~\ref{sec:proof-mixing} and relies on the fact that the stochastic ordering is maintained throughout for all $t \geq 0$, as shown by the following proposition.

\begin{proposition}
\label{prop:monotone_start}
Under the conditions of Theorem~\ref{thm:bound_mixingtime},
there exists a joint probability space such that $X_v^{(1)}(t) \leq X_v^{(2)}(t)$ for all $v \in V$ and $t \geq 0$, almost surely, along any sample path.
\end{proposition}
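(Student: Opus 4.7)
My plan is to build an explicit coupling on a single probability space under which the coordinate-wise inequality $X_v^{(1)}(t) \leq X_v^{(2)}(t)$ for every $v \in V_N$ is preserved pathwise, and then verify the inequality is preserved at every jump of the coupled Markov process.

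First, I couple the initial conditions: by the hypothesis $\boldsymbol{X}_0 \leq_{st} \boldsymbol{X}^{(2)}(\infty)$ (well defined by ergodicity), we may take $X_v^{(1)}(0) \leq X_v^{(2)}(0)$ for all $v$ almost surely. For arrivals, I attach to each task type $w \in W_N$ a common Poisson process of rate $\lambda N/M(N)$. At each arrival time I jointly draw (a) an ordered tuple $S = (v^{(1)}, \ldots, v^{(d)})$ of servers sampled i.i.d.\ uniformly from $\cN_w$, and (b) a uniformly random permutation $\pi$ of $\{1, \ldots, d\}$; both $S$ and $\pi$ are shared across the two systems. In system $k$, the task is routed to $v^{(\pi(i_k^*))}$, where $i_k^*$ is the smallest index $i$ with $v^{(\pi(i))}$ attaining the minimum of $\{X_v^{(k)}(t-) : v \in S\}$. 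The marginal choice in each system is uniform over its minima, so each system marginally evolves as JSQ$(d)$ with uniform tie-breaking. For service completions, I attach a common rate-one Poisson clock to each server $v$, and each system decrements its queue at $v$ if it is positive when the clock rings.

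Next I argue by induction over the jumps. A service event preserves the ordering immediately. For an arrival with sampled $S$, write $m_k := \min_{v \in S} X_v^{(k)}(t-)$ and $A_k := \{v \in S : X_v^{(k)}(t-) = m_k\}$; the coordinate-wise ordering yields $m_1 \leq m_2$. If $m_1 < m_2$, the selected $v_1$ satisfies $X_{v_1}^{(1)}(t-) + 1 = m_1 + 1 \leq m_2 \leq X_{v_1}^{(2)}(t-)$, so the system-$1$ increment respects the ordering, and the system-$2$ increment at $v_2$ is trivially safe. If $m_1 = m_2 =: c$, the bounds $X_v^{(1)}(t-) \leq X_v^{(2)}(t-) = c$ and $X_v^{(1)}(t-) \geq m_1 = c$ show $A_2 \subseteq A_1$. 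The shared permutation $\pi$ then splits the analysis into: (i) $v_1 \in A_2$, in which case $v_1$ is the $\pi$-earliest element of $A_1$ and, a fortiori, of $A_2$, forcing $v_1 = v_2$ and aligning the two updates; or (ii) $v_1 \in A_1 \setminus A_2$, in which case $X_{v_1}^{(2)}(t-) > c$ and hence $X_{v_1}^{(1)}(t-) + 1 \leq c + 1 \leq X_{v_1}^{(2)}(t-)$. In all cases the coordinate-wise ordering survives the jump at every server.

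The main obstacle is precisely this tie-breaking step. Independent uniform tie-breaking over $A_1$ and $A_2$ is not enough: with $d=3$, $S=\{v_1,v_2,v_3\}$, $X_v^{(1)}(t-) \equiv c$, $X_{v_1}^{(2)}(t-) = X_{v_2}^{(2)}(t-) = c$ and $X_{v_3}^{(2)}(t-) = c+1$, an independent uniform choice can select $v_1$ in system $1$ and $v_2$ in system $2$ with positive probability, producing $X_{v_1}^{(1)}(t) = c+1 > c = X_{v_1}^{(2)}(t)$. The shared random permutation $\pi$ is the essential device: it preserves the correct uniform marginal in each system while enforcing that whenever a server chosen in system $1$ also lies in $A_2$, it must coincide with the server chosen in system $2$. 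Once this coupling is constructed, an induction over the almost surely locally finite set of jump times of the combined Poisson drivers yields the proposition.
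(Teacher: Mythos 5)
Your coupling is correct, but it takes a genuinely different route from the paper's. The paper couples at the level of the assignment \emph{probabilities}: it computes, for each $v\in\cN_w$, the probability $p_v^{(k)}$ of being selected in system $k$, shares a single uniform variable $U_t$, partitions $[0,1]$ so that a block of mass $\sum_v\hat p_v$ with $\hat p_v = \min(p_v^{(1)},p_v^{(2)})$ forces identical choices in the two systems, and, when the choices differ, invokes the monotonicity of $(x^d-y^d)/(x-y)$ in $(x,y)$ (Lemma~\ref{lem:asg_monotone}) to rule out by contradiction the bad case $X_{v_1}^{(1)}(t-) = X_{v_1}^{(2)}(t-)$. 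You instead share the actual sampled tuple $S$ and a tie-breaking order, exploit the inclusion $A_2\subseteq A_1$ (valid when $m_1 = m_2$), and conclude directly; Lemma~\ref{lem:asg_monotone} is never needed. This makes your argument more elementary and self-contained for JSQ($d$). What the paper's probability-level construction buys, as it explicitly notes after the proposition, is that the identical argument applies to any assignment rule whose selection probabilities satisfy the monotonicity of Lemma~\ref{lem:asg_monotone}, whereas your construction is tied to the specific sampling-and-tie-break mechanism of JSQ($d$). One small remark: since $S$ is an i.i.d.\ (hence exchangeable) tuple, the auxiliary permutation $\pi$ is redundant --- breaking ties by the first index of $S$ already yields the uniform marginal over servers attaining the sampled minimum, which is the marginal implicit in the paper's formula for $p_v^{(k)}$; the permutation does no harm, but it can be dropped.
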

The proposition is proved in Section~\ref{sec:proof-mixing}. The proof follows by an induction argument, where we show that the inequality is maintained for each arrival and departure epoch. At an arrival epoch, we use a monotonicity property of the JSQ($d$) policy: if we sample the same $d$ servers in both systems, then the task is routed to a server with a higher queue length in system 2 than in system 1. We relate this behavior to a property of the probabilistic assignment function of JSQ($d$) (Lemma \ref{lem:asg_monotone}). The proposition generalizes to any assignment policy which satisfies such a monotonicity property.

\subsection{Process-level limit the empty state}

As our quantity of interest $\bq^N(t)$ becomes arbitrarily close to the steady-state in finite time, it is sufficient to characterize the transient behavior of one sample path of the system. 
We prove that $\bq^N(t)$ remains close to a system of ODEs if $\phi(G_N)$ and $\gamma(G_N)$ are small (recall \eqref{eq:phiandgamma} for their definition) and the queues in the starting state are `sufficiently spread out'. 

\begin{theorem}
\label{thm:process_level_limit}
Let $G_N$ be a graph, $\rho(G_N) \leq \rho_0$, and $\boldsymbol{\bar{q}}(t) = ( \bar{q}_i(t) )_{i \geq 1}$ be the unique solution to the system of ODEs
\begin{equation}\label{eq:ode}
    \frac{d \bar{q}_i(t)}{dt}
    = \lambda \left( \bar{q}_{i-1}(t)^d - \bar{q}_i(t)^d \right) - \left( \bar{q}_i(t) - \bar{q}_{i+1}(t) \right) \text{ for } i \in \N,
\end{equation}
Then, there exists a constant $c \geq 1$ (depending only on $\rho_0$ and $d$) such that, for all $t \geq 0$,
\begin{equation}
\begin{multlined}
    \E\Big[ \sup_{s \in [0, t]} \sum_{i = 1}^\infty \left( q_i^N(s) - \bar{q}_i(s) \right)^2 \Big]
    \leq 2 \phi(G_N)^2 \Big( \lambda t + \E\Big[ \sum_{i = 1}^\infty q_i^N(0)^2 \Big] \Big) \\
    + 12 e^{c t^2} \Big( t^2 d^2 \phi(G_N)^2 + \E\Big[ \sum_{i = 1}^\infty \Big( \frac{1}{M} \sum_{w \in W} \left\lvert q_i^{N,w}(0) - \bar{q}_i(0) \right\rvert \Big)^2 \Big] + 4 t (\rho_0 d + 1 ) \gamma(G_N) \Big).
\end{multlined}
\end{equation}
\end{theorem}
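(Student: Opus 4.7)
The plan is to write $q_i^N(t)$ in semimartingale form, compare it to the integrated ODE~\eqref{eq:ode}, and close the resulting error inequality on $\sum_i (q_i^N - \bar q_i)^2$ via Gronwall. Concretely, for each $i \geq 1$,
\begin{equation*}
q_i^N(t) = q_i^N(0) + \int_0^t \Big[ \frac{\lambda}{M(N)} \sum_{w \in W_N} \big( (q_{i-1}^{N,w})^d - (q_i^{N,w})^d \big) - \big( q_i^N - q_{i+1}^N \big) \Big] \diff s + M_i^N(t),
\end{equation*}
with $M_i^N$ a compensated pure-jump martingale of jump size $1/N$. The drift depends on the \emph{local} fractions $q_i^{N,w}$, and the mismatch with the ODE drift is exactly what $\phi(G_N)$ and $\gamma(G_N)$ are designed to control.

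I would use the decomposition $q_i^N - \bar q_i = (q_i^N - \tilde q_i^N) + (\tilde q_i^N - \bar q_i)$, where $\tilde q_i^N := M(N)^{-1}\sum_{w}q_i^{N,w}$. A swap-of-summation identity,
\begin{equation*}
\tilde q_i^N - q_i^N = \frac{1}{N}\sum_{v \in V_N}\mathbbm{1}\{X_v \geq i\}\Big(\frac{N}{M(N)}\sum_{w\in \cN_v}\frac{1}{d_w^N} - 1\Big),
\end{equation*}
yields the pointwise bound $|q_i^N - \tilde q_i^N|\leq \phi(G_N)\,q_i^N$, hence $\sum_i (q_i^N - \tilde q_i^N)^2 \leq \phi(G_N)^2 \sum_i (q_i^N)^2$. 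It\^{o}'s formula applied to $\sum_i (q_i^N(s))^2$ shows that its drift is bounded (because $\sum_i q_i^N \cdot A_i^N$ telescopes neatly to at most $\lambda$ and $\sum_i q_i^N \cdot B_i^N \geq 0$), so a Doob step yields $\E[\sup_{s\leq t}\sum_i q_i^N(s)^2] \leq \E[\sum_i q_i^N(0)^2] + O(\lambda t)$, producing the first summand $2\phi(G_N)^2(\lambda t + \E[\sum_i q_i^N(0)^2])$ in the bound.

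For the remainder $\tilde q_i^N - \bar q_i$, I would apply $|x^d - y^d|\leq d|x-y|$ on $[0,1]$ and split $M(N)^{-1}\sum_w (q_i^{N,w})^d - \bar q_i^d$ into a Lipschitz-in-error piece that drives the Gronwall closure (whose integrated, time-dependent coefficients generate the $e^{ct^2}$ factor) and a graph-error piece dominated by $d \cdot M(N)^{-1}\sum_w |q_i^{N,w} - \bar q_i|$, whose value at $t=0$ is exactly the third summand in the bound. Its subsequent evolution is controlled by the semimartingale for each local $q_i^{N,w}$: jumps have size $1/d_w^N$, and under the stability hypothesis $\rho(G_N)\leq \rho_0$ the arrival-plus-departure rate at any server $v \in \cN_w$ is at most $\rho_0 d + 1$ (arrivals at $v$ total $\leq \lambda d \cdot (N/M(N))\sum_{w'\in \cN_v} 1/d_{w'}^N \leq \rho_0 d$, departures at most $1$). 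Hence the predictable quadratic variation of $q_i^{N,w}$ is at most $(\rho_0 d + 1)/d_w^N$ per unit time; telescoping over $i$ and averaging over $w$ yields the $4t(\rho_0 d + 1)\gamma(G_N)$ contribution, while the $t^2 d^2 \phi(G_N)^2$ term arises from propagating the pointwise $\phi$-error through the $d$-Lipschitz comparison over $[0,t]$ within the Gronwall loop. The main technical obstacle is maintaining uniformity in the infinite index $i \in \N$ together with the running supremum; I would handle both by truncating at a large level $I$, applying a Hilbert-space Doob inequality coordinatewise, and then passing to the limit by monotone convergence using the total-work tail bound.
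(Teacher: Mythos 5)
Your overall strategy mirrors the paper's: a semimartingale decomposition, a split of the error into a $\phi$-controlled piece $q_i^N - \tilde q_i^N$ (with $\tilde q_i^N = M^{-1}\sum_w q_i^{N,w}$) and a local piece $\tilde q_i^N - \bar q_i$, a Gr\"onwall inequality on $\sum_i\big(\frac{1}{M}\sum_{w}|q_i^{N,w} - \bar q_i|\big)^2$ producing the $e^{ct^2}$ factor, and a Doob/quadratic-variation estimate for the martingales. Your swap-of-summation identity, the arrival-counting bound for $\sum_i (q_i^N)^2$, and the jump-size $1/d_w^N$ with per-server rate $\rho_0 d + 1$ accounting that produces $4t(\rho_0 d + 1)\gamma(G_N)$ all appear in the paper's proof in essentially the same form (the paper organizes the argument around $d_i^w := |q_i^{N,w} - \bar q_i|$ from the start and applies your decomposition at the end, but that is cosmetic).

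There is, however, a genuine gap in the drift comparison that is supposed to close the Gr\"onwall loop. Your only Lipschitz ingredient is $|x^d - y^d| \leq d|x-y|$. That suffices to compare the \emph{global} drift $\frac{\lambda}{M}\sum_w\big((q_{i-1}^{N,w})^d - (q_i^{N,w})^d\big)$ to the ODE, but to close the Gr\"onwall you must control the dynamics of each local fraction $q_i^{N,w}$, and its arrival drift is \emph{not} $\lambda\big((q_{i-1}^{N,w})^d - (q_i^{N,w})^d\big)$. A server $v \in \cN_w$ with $X_v = i-1$ receives a task at rate $\frac{\lambda N}{M}\sum_{w' \in \cN_v} \frac{(q_{i-1}^{N,w'})^d - (q_i^{N,w'})^d}{Q_{i-1}^{N,w'} - Q_i^{N,w'}}$, so the arrival drift of $Q_i^{N,w}$ is a double sum over $v\in\cN_w$, $w'\in\cN_v$ of this per-server conditional assignment \emph{ratio}. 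Comparing this to $\lambda\frac{\bar q_{i-1}^d - \bar q_i^d}{\bar q_{i-1} - \bar q_i}$ requires a Lipschitz estimate for the bivariate ratio $(x,y)\mapsto\frac{x^d-y^d}{x-y}$, which the paper isolates as Lemma~\ref{lem:asg_lipschitz}; the one-variable bound $|x^d-y^d|\leq d|x-y|$ does not control it (nor does it handle the nearly-vanishing denominator $Q_{i-1}^{N,w'} - Q_i^{N,w'}$). The $\phi(G_N)$ error inside the Gr\"onwall loop then enters precisely as the factor $\big|\frac{N}{M}\sum_{w'\in\cN_v}\frac{1}{d_{w'}^N} - 1\big| \leq \phi(G_N)$ multiplying that ratio, a mechanism your sketch attributes vaguely to ``propagating the pointwise $\phi$-error'' but does not actually identify. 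Without the ratio-Lipschitz lemma and this explicit source of the inner $\phi$-error, the local drift error cannot be reduced to $d\phi(G_N)(q_{i-1}^{N,w} - q_i^{N,w})$ plus a linear functional of $\{|q_{i-1}^{N,w'}-\bar q_{i-1}|, |q_i^{N,w'}-\bar q_i|\}_{w'}$, and the Gr\"onwall inequality does not close.
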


Note that Theorem~\ref{thm:process_level_limit} is also a non-asymptotic result.  
An immediate corollary is the following.

\begin{corollary}\label{cor:process-all-empty}
Let $(G_N)_{N\geq 1}$ be a sequence of graphs, $\rho(G_N) \leq \rho_0$ and $\max\{\phi(G_N), \gamma(G_N)\}\to 0$ as $N\to \infty$.
Also, assume that $q^N_1(0) = 0$. Then, for any $t\geq 0$,
$$\lim_{N\to \infty}\E\Big[ \sup_{s \in [0, t]} \sum_{i = 1}^\infty \left( q_i^N(s) - \bar{q}_i(s) \right)^2 \Big] = 0,$$
where $(\bar{q}_i(t))_{i \geq 1}$ is as defined in~\eqref{eq:ode}.
\end{corollary}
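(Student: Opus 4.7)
The plan is to invoke Theorem~\ref{thm:process_level_limit} and collapse its right-hand side using the empty-state initial condition. First, the hypothesis $q_1^N(0) = 0$, combined with the fact that $q_i^N(t)$ is non-increasing in $i$ (each is a tail sum of the queue-length distribution), yields $q_i^N(0) = 0$ for every $i \in \N$. The same identity propagates to every local occupancy, namely $q_i^{N,w}(0) = 0$ for every $w \in W_N$ and every $i \in \N$. For the comparison to be meaningful, I take the ODE~\eqref{eq:ode} with the natural initial condition $\bar{q}_i(0) = 0$ for $i \in \N$, which is the value consistent with an all-empty initial configuration.

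With these initial conditions, the two terms involving time zero in the bound of Theorem~\ref{thm:process_level_limit} vanish identically: $\sum_{i \geq 1} q_i^N(0)^2 = 0$ and $q_i^{N,w}(0) - \bar{q}_i(0) = 0$ for every $w \in W_N$ and every $i \in \N$, almost surely. What remains is a deterministic bound of the form
\begin{equation*}
2 \lambda t \, \phi(G_N)^2 + 12 e^{c t^2}\bigl( t^2 d^2 \phi(G_N)^2 + 4 t (\rho_0 d + 1)\, \gamma(G_N) \bigr),
\end{equation*}
whose constants depend on $t$, $\rho_0$, $\lambda$, and $d$ but not on $N$.

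Finally, for $t$ held fixed and $N \to \infty$, the hypothesis $\max\{\phi(G_N), \gamma(G_N)\} \to 0$ forces the displayed upper bound to vanish, which yields the desired conclusion. There is no serious obstacle: the corollary is a direct specialization of Theorem~\ref{thm:process_level_limit}. The only verification that warrants a sentence of care is the matching of initial conditions for the ODE, which must be the all-zero state so that the local initial-occupancy discrepancy inside the second bracket of the bound vanishes in tandem with the global term $\sum_{i \geq 1} q_i^N(0)^2$; everything else is plug-and-check.
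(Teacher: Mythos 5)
Your proof is correct and is exactly the route the paper intends: the paper presents the statement as ``an immediate corollary'' of Theorem~\ref{thm:process_level_limit}, and your specialization to the all-empty initial state (noting $q_i^N(0)=0$ and $q_i^{N,w}(0)=\bar{q}_i(0)=0$ for all $i$ and $w$, so both initial-condition terms vanish) followed by $\max\{\phi(G_N),\gamma(G_N)\}\to 0$ is precisely the plug-and-check the authors have in mind. No gaps.
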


The `sufficiently spread out' condition in Theorem~\ref{thm:process_level_limit} is imposed by the initial state quantity $\sum_{i = 1}^\infty \big( \frac{1}{M} \sum_{w \in W} \big\lvert q_i^{N,w}(0) - \bar{q}_i(0) \big\rvert \big)^2$. 
This term is small if $\boldsymbol{q}^{N,w}(0) \approx \boldsymbol{\bar{q}}(0)$ for most $w \in W$ and, hence, if the local queue length distribution from the perspective of each task type is approximately equal. 
In particular, this term is zero if the system starts from the empty state, in which case $\boldsymbol{q}^{N,w}(0) = \boldsymbol{\bar{q}}(0) = 0$ for all $w \in W_N$. 
Theorem~\ref{thm:process_level_limit} is proved in Section~\ref{ssec:process-proof}.
The proof relies on tracking a sequence of martingales for each $w \in W_N$ and bounding the $\ell_2$-distance to the ODE by their quadratic variation and quantities such as $\phi(G_N)$ and $\gamma(G_N)$ using Gr\"onwall's inequality. In the proof, the quantity $\phi(G_N)$ is used in \eqref{eq:phi1} and \eqref{eq:phi2} and $\gamma(G_N)$ is used in \eqref{eq:gamma1}.

\begin{remark}\normalfont
One should contrast Theorem~\ref{thm:process_level_limit} and Corollary~\ref{cor:process-all-empty} with the process-level limit result proved in Budhiraja et al.~\cite{BMW17}.
In this paper, the authors considered an undirected version of the model in the current paper.
The model, as is, is not suitable for capturing the task-server compatibility constraints.
An undirected graph would mean that if server~$i$ can process task type~$j$, then server $j$ must be able to process task type~$i$. 
However, a generalization of the model in~\cite{BMW17} to directed graphs can be viewed as a special case of our model: when $M(N) = N$ and there is a perfect matching between the set of servers and the set of dispatchers (equivalently, a dedicated arrival stream per server).
Although the undirected graph assumption is not crucial in~\cite{BMW17}, the $M(N) = N$ assumption plays a major role for the approach to work.
In the current paper, $M(N)$ can grow at any rate (sub-/super-linearly) with $N$.
As a result of the above structural differences, the queue length process in~\cite{BMW17} is ergodic for \emph{any} graph, whereas in our model, this is non-trivial.

Moreover, \cite{BMW17} establishes the process-level convergence if the initial queue lengths at the servers are i.i.d.~from some distribution. 
The idea there is that, if the system starts from a state where the queue lengths at the servers are i.i.d., then any two queue lengths retain their stochastic independence on any finite time interval, asymptotically as $N\to\infty$.
Consequently, the $N$-dimensional queue length vector can be coupled with an infinite-dimensional McKean-Vlasov process where any finite collection of coordinates are independent on any finite time interval.
The assumption that the queue lengths are i.i.d.~at time zero is crucial for this approach to go through. 
As a result, and as already remarked in the conclusion of~\cite{BMW17}, it is unclear how to prove convergence of the steady state. 
In addition to our main contribution on the convergence of steady states, Theorem~\ref{thm:process_level_limit} generalizes the process-level limit beyond the i.i.d.~case. 
It identifies a structural condition on the initial state that ensures the same process-level limit of as the fully flexible system.
\end{remark}

\section{Proofs}
Most of the results in this section are non-asymptotic and hold for any fixed $N$.
Thus, throughout this section, we drop the dependence on $N$ in the notation where possible, for the sake of brevity.

\subsection{Existence of steady-state and moment bound}

We first prove that the Markov process is positive recurrent and has a unique steady-state.

\begin{proposition}
\label{prop:existence_steadystate}
If $\rho(G_N) \leq \rho_0 < 1$, then the Markov process $\boldsymbol{X}(t) = \left( X_v(t) \right)_{v \in V}$ is positive recurrent and there exists a unique steady-state of the process denoted as $\boldsymbol{X}(\infty)$.
\end{proposition}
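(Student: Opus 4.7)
The plan is to apply the Foster--Lyapunov drift criterion to $\boldsymbol{X}(t)$ with the quadratic Lyapunov function $L(\boldsymbol{X}) := \tfrac{1}{2}\sum_{v \in V_N} X_v^2$. The driving observation is that JSQ$(d)$ routes each arriving task to the minimum of $d$ i.i.d.\ uniform samples from $\cN_w$, and this minimum is deterministically no larger than any single one of those samples. This lets me upper bound the arrival contribution to the drift by what it would be under pure random routing ($d=1$), which is exactly the regime where the assumption $\rho(G_N) \leq \rho_0 < 1$ supplies a strict negative gap essentially for free.

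Concretely, I would compute $\mathcal{A}L(\boldsymbol{X})$ term by term: each arrival at server $v$ contributes $L(\boldsymbol{X}+e_v) - L(\boldsymbol{X}) = X_v + \tfrac{1}{2}$, and each departure at $v$ (active when $X_v \geq 1$) contributes $-X_v + \tfrac{1}{2}$. Writing $v^{\star}(\boldsymbol{X},w)$ for the server selected by JSQ$(d)$ at dispatcher $w$ and using $\min_{i\leq d} X_{v_i} \leq X_{v_1}$ pointwise on the $d$ i.i.d.\ uniform samples in $\cN_w$, the arrival contribution is bounded by
\begin{equation*}
    \sum_{w \in W_N} \frac{\lambda N}{M(N)} \Bigl( \frac{1}{d_w^N}\sum_{v \in \cN_w} X_v + \tfrac{1}{2} \Bigr)
    = \sum_{v \in V_N} X_v \sum_{w \in \cN_v} \frac{\lambda N}{M(N) d_w^N} + \frac{\lambda N}{2}
    \leq \rho_0 \sum_{v \in V_N} X_v + \frac{\lambda N}{2},
\end{equation*}
where the final inequality invokes the definition of $\rho(G_N)$ in~\eqref{eq:stability}. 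The departure contribution equals $-\sum_v X_v + \tfrac{1}{2}\lvert\{v : X_v \geq 1\}\rvert$, trivially at most $-\sum_v X_v + N/2$, so altogether $\mathcal{A}L(\boldsymbol{X}) \leq -(1-\rho_0)\sum_{v\in V_N} X_v + (\lambda+1)N/2$.

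This drift is uniformly bounded above by $-1$ outside the finite sub-level set $\{\boldsymbol{X} : \sum_v X_v \leq C\}$ for $C$ chosen large enough. Since the total jump rate of the process is bounded (by $(\lambda+1)N$), the standard Foster--Lyapunov criterion yields positive recurrence on the communicating class of the empty state $\boldsymbol{0}$, hence existence of an invariant law. Uniqueness then follows from irreducibility: every state reaches $\boldsymbol{0}$ through a finite chain of exponential departures, and from $\boldsymbol{0}$ any target configuration on the set $\{v : \cN_v \neq \emptyset\}$ is reached with positive probability through appropriately routed arrivals (servers with $\cN_v = \emptyset$ are frozen at $0$ and can be discarded at the outset). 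The only step requiring a moment of care is the pointwise min-versus-sample domination, but it holds deterministically along every realisation of the JSQ$(d)$ randomisation and thus passes to the conditional expectation without extra work; no genuine obstacle is expected beyond this and the routine bookkeeping around isolated servers.
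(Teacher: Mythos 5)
Your proof is correct and takes essentially the same route as the paper: a Foster--Lyapunov drift argument with a quadratic-type Lyapunov function. Note that the paper's choice $V(\boldsymbol{X})=\sum_{i\geq 1}\sum_{j\geq i}Q_j=\sum_{v}X_v(X_v+1)/2$ is exactly your $L(\boldsymbol{X})$ plus a harmless linear term, and the key bound is likewise the same: you dominate JSQ$(d)$ by random routing pathwise via $\min_{i\leq d}X_{v_i}\leq X_{v_1}$, while the paper states the identical fact in aggregated form as $q_{i-1}^w(t)^d\leq q_{i-1}^w(t)$ before invoking the definition of $\rho(G_N)$.
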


The proof relies on a Lyapunov argument. Let $V(t) := \sum_{i = 1}^\infty \sum_{j = i}^\infty Q_j(t)$ be the Lyapunov function. If we show that the drift of $V(t)$ is strictly negative anywhere outside of a suitably chosen finite set of states, then this is sufficient for positive recurrence. As such, we compute the drift.

\begin{lemma}
\label{lem:derivative_qsum}
Fix any $i \in \N$ and $t \geq 0$. Then,
\begin{equation}
    \frac{d}{dt} \E\left[ \sum_{j = i}^\infty Q_j(t) \right]
    = \E\left[ \frac{\lambda N}{M} \sum_{w \in W} q_{i-1}^w(t)^d - Q_i(t) \right].
\end{equation}
\end{lemma}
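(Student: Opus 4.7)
\textbf{Proof sketch for Lemma~\ref{lem:derivative_qsum}.}
The plan is to rewrite the sum in a server-indexed form, apply the infinitesimal generator of the Markov process $\boldsymbol{X}(t)$, and then match the resulting terms against the two sides of the identity. Observe first that
\begin{equation}
    \sum_{j = i}^\infty Q_j(t) = \sum_{v \in V_N} \sum_{j = i}^\infty \mathbbm{1}\{X_v(t) \geq j\} = \sum_{v \in V_N} (X_v(t) - i + 1)^+,
\end{equation}
which is a well-defined (finite) quantity on $\{\sum_v X_v(t)<\infty\}$. So the goal becomes computing the time-derivative of $\E[\sum_v (X_v(t)-i+1)^+]$.

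Next, I would analyze the two kinds of jumps separately. An arrival at dispatcher $w \in W_N$ occurs at rate $\lambda N/M$; when it occurs, JSQ$(d)$ samples $d$ servers with replacement from $\cN_w$ and routes the task to the one with the smallest queue. Such a jump increases $(X_v - i+1)^+$ by exactly~$1$ for the chosen $v$ if $X_v(t) \geq i-1$ at the moment of the jump, and by $0$ otherwise; hence the contribution to the sum is~$1$ precisely when \emph{all} $d$ sampled servers have queue length at least $i-1$. Since sampling is uniform with replacement from $\cN_w$, this conditional probability equals $q_{i-1}^w(t)^d$ by definition of $q_{i-1}^w$. Summing over $w$ gives the arrival contribution $\frac{\lambda N}{M}\sum_{w \in W_N} q_{i-1}^w(t)^d$. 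For departures, each server $v$ completes a service at rate $\mathbbm{1}\{X_v(t) \geq 1\}$; such an event decreases $(X_v - i+1)^+$ by exactly~$1$ iff $X_v(t) \geq i$ (and by $0$ otherwise), so the total departure contribution is $-\sum_{v} \mathbbm{1}\{X_v(t) \geq i\} = -Q_i(t)$.

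Combining the two rates, the generator $\mathcal{A}$ applied to $f_i(\boldsymbol{x}) := \sum_v (x_v - i+1)^+$ equals $\frac{\lambda N}{M}\sum_w q_{i-1}^w(\boldsymbol{x})^d - Q_i(\boldsymbol{x})$, where we used (in a mild abuse of notation) the deterministic functionals of the state. Applying Dynkin's formula (which requires checking integrability; this follows from Proposition~\ref{prop:existence_steadystate} together with a standard moment bound obtained by a Lyapunov argument, and also holds transiently by a stopping-time truncation since the jumps are bounded) gives
\begin{equation}
    \frac{d}{dt}\E\Big[\sum_{j=i}^\infty Q_j(t)\Big] = \E\Big[\frac{\lambda N}{M}\sum_{w \in W_N} q_{i-1}^w(t)^d - Q_i(t)\Big],
\end{equation}
which is the claimed identity. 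The main (minor) obstacle is the integrability/interchange step: one must justify differentiating the expectation of an infinite sum. I would handle this by first truncating to servers with $X_v \leq K$ via a stopping time, applying Dynkin there to get the identity, and then sending $K\to\infty$ using monotone convergence on both sides (noting that both the arrival and departure rates are uniformly bounded by $\lambda N + N$, so dominated convergence also works for the time-derivative).
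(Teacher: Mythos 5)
Your proof is correct and essentially mirrors the paper's: both identify the same arrival contribution $\frac{\lambda N}{M}\sum_w q_{i-1}^w(t)^d$ (an arrival at dispatcher $w$ lands on a server with queue length $\geq i-1$ precisely when all $d$ sampled servers do, with probability $q_{i-1}^w(t)^d$) and the same departure contribution $-Q_i(t)$, and then justify the derivative of the expectation. The only difference is cosmetic packaging — you invoke the generator and Dynkin's formula (with a truncation to handle integrability), while the paper carries out the explicit uniformized-Poisson, $h\downarrow 0$ computation conditioned on $\Delta N(t)=1$ versus $\Delta N(t)\geq 2$, which is effectively a hands-on proof of the same Dynkin-type identity; your server-indexed rewriting $\sum_{j\geq i}Q_j = \sum_v (X_v-i+1)^+$ is a tidy but inessential simplification.
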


\begin{proof}
To change the value of $\sum_{j = i}^\infty Q_j(t)$, a task must arrive to a server with queue length at least $i-1$ or a task must depart a server with queue length at least $i$.

Let us compute the probability that a task is assigned to a server with queue length at least $i-1$. At the epoch time of an arrival, a task adopts a type $w \in W$ uniformly at random. The task is routed to a server with queue length at least $i-1$ if and only if the system only samples servers with queue length at least $i-1$, which happens with probability $q_{i-1}^w(t-)^d$. This results in a probability of $\frac{1}{M} \sum_{w \in W} q_{i-1}^w(t-)^d$ to be routed to a server with queue length at least $i-1$.

Now, we compute the probability that a task departs a server with queue length at least $i$. At the epoch time of a potential departure, a server $v \in V$ is chosen uniformly at random. A task departs a server with queue length at least $i$ if and only if $v$ has queue length at least $i$. This results in a probability of $q_i(t-)$ to depart a server with queue length at least $i$.

We describe the arrival and departure process as follows. Let $N(t)$ be a Poisson process of rate $(\lambda + 1) N$. An event of the process is either an arrival of type $w \in W$ with probability $\lambda / ((\lambda + 1) M)$ or a potential departure at server $v \in V$ with probability $1 / ((\lambda + 1) N)$, independent of the past. Note that this is equivalent to the model description introduced before. Hence, for any $h > 0$,
\begin{equation}
\begin{split}
    \E\left[ \sum_{j = i}^\infty \Delta Q_j(t) \mid \cF_t \right]
    &= \E\left[ \sum_{j = i}^\infty \Delta Q_j(t) \mid \Delta N(t) = 1, \cF_t \right] \P\left( \Delta N(t) = 1 \right) \\
    &\hspace{3cm}\pm \E\left[ \Delta N(t) \mid \Delta N(t) \geq 2 \right] \P\left( \Delta N(t) \geq 2 \right) \\
   & = \left( \frac{\lambda}{\lambda + 1} \frac{1}{M} \sum_{w \in W} q_{i-1}^w(t)^d - \frac{1}{\lambda + 1} q_i(t) \right) (\lambda + 1) N h e^{-(\lambda + 1) N h} \\
   &\hspace{3cm} \pm \left( (\lambda + 1) N h + 2 \right) \left( (\lambda + 1) N h \right)^2,
\end{split}
\end{equation}
where $\Delta Q_j(t) := Q_j(t + h) - Q_j(t)$ and $\Delta N(t) := N(t + h) - N(t)$. Here, we use the shorthand notation $\pm x$ to denote a term in $[-x, x]$. The equation above implies
\begin{equation}
    \frac{d}{dt} \E\left[ \sum_{j = i}^\infty Q_j(t) \right]
    = \lim_{h \downarrow 0} \frac{\E\left[ \E\left[ \sum_{j = i}^\infty \left( Q_j(t + h) - Q_j(t) \right) \mid \cF_t \right] \right]}{h}
    = \E\left[ \frac{\lambda N}{M} \sum_{w \in W} q_{i-1}^w(t)^d - Q_i(t) \right],
\end{equation}
which completes the proof of the lemma.
\end{proof}

\begin{proof}[Proof of Proposition \ref{prop:existence_steadystate}]
Note that
\begin{equation}
\label{eq:q_bound_global}
    \frac{\lambda N}{M} \sum_{w \in W} q_i^w(t)
    = \frac{\lambda N}{M} \sum_{w \in W} \sum_{\substack{v \in \cN_w \\ X_v(t) \geq i}} \frac{1}{d_w}
    = \sum_{\substack{v \in V \\ X_v(t) \geq i}} \frac{\lambda N}{M} \sum_{w \in \cN_v} \frac{1}{d_w}
    \leq \sum_{\substack{v \in V \\ X_v(t) \geq i}} \rho_0
    = \rho_0 Q_i(t).
\end{equation}
Let $V(t) := \sum_{i = 1}^\infty \sum_{j = i}^\infty Q_j(t)$ and $\boldsymbol{X}(0) = \boldsymbol{x} \in \N^V$. Then, by the monotone convergence theorem,
\begin{equation}
\begin{multlined}
    \frac{d}{dt} \E\left[ V(t) \right]
    = \frac{d}{dt} \sum_{i = 1}^\infty \E\left[ \sum_{j = i}^\infty Q_j(t) \right]
    = \sum_{i = 1}^\infty \frac{d}{dt} \E\left[ \sum_{j = i}^\infty Q_j(t) \right]
    = \sum_{i = 1}^\infty \E\left[ \frac{\lambda N}{M} \sum_{w \in W} q_{i-1}^w(t)^d - Q_i(t) \right] \\
    \leq \sum_{i = 1}^\infty \E\left[ \frac{\lambda N}{M} \sum_{w \in W} q_{i-1}^w(t) - Q_i(t) \right]
    \leq \sum_{i = 1}^\infty \E\left[ \rho_0 Q_{i-1}(t) - Q_i(t) \right]
    = -(1 - \rho_0) \sum_{i = 1}^\infty \E\left[ Q_i(t) \right] + \rho_0 N,
\end{multlined}
\end{equation}
where we use the fact that $\sum_{i = 1}^\infty \E\left[ Q_i(t) \right]$ converges uniformly on any finite interval to exchange derivative and sum in the second equality, Lemma \ref{lem:derivative_qsum} in the third equality and \eqref{eq:q_bound_global} in the second inequality. Let $S := \left\{ \boldsymbol{x} \in \N^V : \sum_{i = 1}^\infty \sum_{v \in V} \mathbbm{1}\{ x_v \geq i \} \leq N / (1 - \rho_0) \right\}$ and note that $S$ is finite. Then,
\begin{equation}
    \frac{d}{dt} \E\left[ V(t) \right] \big\rvert_{t = 0}
    \leq -(1 - \rho_0) \sum_{i = 1}^\infty Q_i(0) + \rho_0 N
    \leq -(1 - \rho_0) N + N \mathbbm{1}\{ \boldsymbol{x} \in S \}.
\end{equation}
Hence, by Theorem 4.2 in \cite{meyn1993stability}, the Markov process $\boldsymbol{X}(t)$ is positive recurrent and there exists a unique steady-state of the process denoted as $\boldsymbol{X}(\infty)$.
\end{proof}

As a consequence, a similar Lyapunov argument shows a moment bound on the steady-state.

\begin{corollary}
\label{cor:bound_steadystate}
If $\rho(G_N) \leq \rho_0 < 1$, then $\E\left[ q_i(\infty) \right] \leq \rho_0^i$ for all $i \in \N$.
\end{corollary}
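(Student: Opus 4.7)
The plan is to reuse Lemma \ref{lem:derivative_qsum} at stationarity to obtain a one-step balance equation linking $\E[Q_i(\infty)]$ to $\E[Q_{i-1}(\infty)]$, and then induct on $i$ starting from the trivial bound $\E[q_0(\infty)] = 1$.

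First I would initialize the chain in the stationary distribution, i.e.~$\boldsymbol{X}(0) \overset{d}{=} \boldsymbol{X}(\infty)$, which exists by Proposition \ref{prop:existence_steadystate}. In fact the Lyapunov computation in the proof of that proposition, combined with Fatou's lemma as $t \to \infty$, yields $\sum_{j \geq 1} \E[Q_j(\infty)] < \infty$, so every tail sum $\E[\sum_{j \geq i} Q_j(t)]$ is finite and constant in $t$ under stationarity. Consequently its time derivative vanishes, and Lemma \ref{lem:derivative_qsum} gives the steady-state balance
\begin{equation*}
    \E[Q_i(\infty)] \;=\; \frac{\lambda N}{M} \sum_{w \in W} \E\left[ q_{i-1}^w(\infty)^d \right].
\end{equation*}

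Next I would exploit $q_{i-1}^w(\infty) \in [0,1]$ together with $d \geq 2$, so that $q_{i-1}^w(\infty)^d \leq q_{i-1}^w(\infty)$ pointwise. Combining this with the pathwise inequality \eqref{eq:q_bound_global}, namely $\frac{\lambda N}{M}\sum_{w \in W} q_{i-1}^w(\infty) \leq \rho_0\, Q_{i-1}(\infty)$, and taking expectations, I obtain
\begin{equation*}
    \E[Q_i(\infty)] \;\leq\; \rho_0\, \E[Q_{i-1}(\infty)], \qquad \text{equivalently}\qquad \E[q_i(\infty)] \leq \rho_0\, \E[q_{i-1}(\infty)].
\end{equation*}
Iterating this inequality against the base case $\E[q_0(\infty)] = 1$ yields $\E[q_i(\infty)] \leq \rho_0^i$ for all $i \in \N$.

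The only subtle step is the justification that the time derivative of $\E[\sum_{j \geq i} Q_j(t)]$ genuinely vanishes in steady state rather than merely being absolutely summable along the path: this requires the a priori finiteness of all these tail sums, which is why the Lyapunov argument in Proposition \ref{prop:existence_steadystate} is invoked first. Everything else is a clean application of $x^d \leq x$ on $[0,1]$, the deterministic rate bound \eqref{eq:q_bound_global}, and induction.
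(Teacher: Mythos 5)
Your proof is correct and follows essentially the same route as the paper: initialize in stationarity, set the derivative from Lemma \ref{lem:derivative_qsum} to zero, apply $x^d \le x$ on $[0,1]$ and \eqref{eq:q_bound_global}, and induct from $\E[q_0(\infty)]=1$. The only difference is that you spell out the finiteness of $\sum_j \E[Q_j(\infty)]$ to justify that the derivative genuinely vanishes, a point the paper leaves implicit.
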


\begin{proof}
Fix any $i \in \N$ and $t \geq 0$. We let $\boldsymbol{X}(0) \overset{d}{=} \boldsymbol{X}(\infty)$ such that $\boldsymbol{X}(t) \overset{d}{=} \boldsymbol{X}(\infty)$. Then,
\begin{equation}
\begin{multlined}
    0
    = \frac{d}{dt} \E\left[ \sum_{j = i}^\infty Q_j(t) \right]
    = \E\left[ \frac{\lambda N}{M} \sum_{w \in W} q_{i-1}^w(t)^d - Q_i(t) \right] \\
    \leq \E\left[ \frac{\lambda N}{M} \sum_{w \in W} q_{i-1}^w(t) - Q_i(t) \right]
    \leq \E\left[ \rho_0 Q_{i-1}(t) - Q_i(t) \right],
\end{multlined}
\end{equation}
where we use Lemma \ref{lem:derivative_qsum} in the second equality and \eqref{eq:q_bound_global} in the second inequality. Hence, by induction, $\E\left[ q_i(t) \right] \leq \rho_0 \E\left[ q_{i-1}(t) \right] \leq \rho_0^i$, which completes the proof of the lemma.
\end{proof}

\subsection{Proof of the large-scale mixing-time bound}
\label{sec:proof-mixing}

The proofs of this section relies on the following stochastic ordering property of the load balancing  policy as stated in Lemma~\ref{lem:asg_monotone}. 
The lemma is proved in Appendix~\ref{sec:app-lem4}.
We prove this for the JSQ($d$) policy. However, we believe that it is possible to generalize the mixing time bound to a large class of load balancing policies which satisfy an analogous monotonicity property.
\begin{lemma}
\label{lem:asg_monotone}
Fix any $0 \leq y_1 < x_1 \leq 1$ and $0 \leq y_2 < x_2 \leq 1$ such that $x_1 \leq x_2$ and $y_1 \leq y_2$. Then,
\begin{equation}
    \frac{x_1^d - y_1^d}{x_1 - y_1}
    \leq \frac{x_2^d - y_2^d}{x_2 - y_2}.
\end{equation}
\end{lemma}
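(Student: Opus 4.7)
The plan is to prove the inequality by a direct algebraic identity rather than via calculus on the divided difference. The key observation is that, since $d$ is a positive integer (it is the sample size in JSQ($d$), with $d \ge 2$), one has the factorization
\begin{equation*}
    x^d - y^d = (x - y) \sum_{k = 0}^{d-1} x^{d-1-k} y^k
\end{equation*}
for any $x, y \in \R$. Hence, whenever $x \neq y$, the divided difference admits the closed form
\begin{equation*}
    \frac{x^d - y^d}{x - y} = \sum_{k = 0}^{d-1} x^{d-1-k} y^k.
\end{equation*}
The first step is to apply this identity to both sides of the desired inequality, rewriting each ratio as a polynomial in the respective pair $(x_j, y_j)$ with nonnegative coefficients (all equal to one).

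The second step is to perform a term-by-term comparison. Because $0 \le y_1 \le y_2 \le 1$ and $0 \le x_1 \le x_2 \le 1$, each individual monomial is monotone nondecreasing in both arguments on the nonnegative orthant, giving
\begin{equation*}
    x_1^{d-1-k} y_1^k \;\le\; x_2^{d-1-k} y_2^k \quad \text{for every } k = 0, 1, \ldots, d-1.
\end{equation*}
Summing over $k$ yields exactly the required inequality. I would also note in passing that the assumptions $y_1 < x_1$ and $y_2 < x_2$ are only needed to ensure both sides are well-defined as divided differences (so that the original ratios make sense), and that the proof goes through for any $x_j \ge 0$ without the upper bound $x_j \le 1$ — the upper bound is incidental to the application.

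I do not expect a real obstacle here: the identity reduces the statement to a coordinatewise monotonicity check on a polynomial with nonnegative coefficients. One might initially be tempted to compute $\partial_x [(x^d - y^d)/(x - y)]$ and $\partial_y [(x^d - y^d)/(x - y)]$ and show both are nonnegative, which amounts to proving that a certain quadratic in $(x-y)$ is nonnegative; this works but is strictly more cumbersome than the algebraic route. The polynomial identity route keeps the proof essentially a one-liner after the factorization is written down, and it also makes transparent the reason the inequality is monotonicity-preserving in both coordinates simultaneously — the fact that load balancing with JSQ($d$) needs in order to couple the two systems in Proposition \ref{prop:monotone_start}.
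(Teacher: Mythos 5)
Your proof is correct, and it takes a genuinely cleaner route than the paper's. You use the classical factorization
\begin{equation*}
    \frac{x^d - y^d}{x - y} \;=\; \sum_{k = 0}^{d-1} x^{d-1-k} y^k,
\end{equation*}
after which the inequality follows from a single term-by-term comparison: each monomial $x^{d-1-k}y^k$ has nonnegative exponents, so it is nondecreasing in both arguments on $[0,\infty)$, and the coordinatewise hypothesis $x_1 \le x_2$, $y_1 \le y_2$ does the rest. The paper instead expands $x^d = ((x-y)+y)^d$ binomially to write the divided difference as $\sum_{i=1}^d \binom{d}{i}(x-y)^{i-1}y^{d-i}$; this expression is manifestly increasing in $x-y$ for fixed $y$ (handling the move from $x_1$ to $x_2$), but not manifestly increasing in $y$ for fixed $x$, so the paper has to separately compute $\partial_y$ of the divided difference, observe it is a sum of nonnegative terms, and invoke the mean value theorem to shift $y_1$ to $y_2$. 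Your representation is symmetric enough that both monotonicities are simultaneously obvious and no calculus is needed — it is the same underlying fact (the divided difference of $x\mapsto x^d$ is jointly monotone on the positive orthant), but the geometric-sum form makes it a one-liner where the binomial form requires a two-step argument. You are also right that the bounds $x_j, y_j \le 1$ are not used anywhere and the statement holds for all nonnegative arguments.
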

\begin{proof}[Proof of Proposition~\ref{prop:monotone_start}]
We couple the arrival and potential departure epochs of the two systems such that any arrival of a task type $w \in W$ and any potential departure at a server $v \in V$ happen at the same time in both systems. We proceed to design a stochastic coupling that maintains the inequality $X_v^{(1)}(t) \leq X_v^{(2)}(t)$ for all $v \in V$ on every arrival and potential departure epoch.
At time $t=0$, the inequality is maintained by the stochastic ordering assumption and by defining $\bX^{(1)}(0)$ and $\bX^{(2)}(0)$ on the suitable probability space.

Let $t \geq 0$ be a potential departure epoch at server $v \in V$ and assume that $X_{v'}^{(1)}(t-) \leq X_{v'}^{(2)}(t-)$ for all $v' \in V$. Clearly, $X_v^{(1)}(t) \leq X_v^{(2)}(t)$ also after the departure.

Now, let $t \geq 0$ be an arrival epoch of a task type $w \in W$ and assume that $X_{v'}^{(1)}(t-) \leq X_{v'}^{(2)}(t-)$ for all $v' \in V$. Fix any $v \in \cN_w$ with $i := X_v(t-)$ and let us compute the probability that the task is assigned to $v$. The task is routed to a server with queue length $i$ if and only if the system only samples servers with queue length at least $i$ and not only servers with queue length at least $i + 1$, which happens with probability $q_i^w(t-)^d - q_{i+1}^w(t-)^d$. By symmetry, any server in $\cN_w$ with queue length $i$ has the same probability of receiving the task and there are a total of $Q_i^w(t-) - Q_{i+1}^w(t-)$ of such eligible servers. This results in a probability of
\begin{equation}
    p_v^{(k)} := \frac{q_{X_v^{(k)}(t-)}^{(k),w}(t-)^d - q_{X_v^{(k)}(t-)+1}^{(k),w}(t-)^d}{Q_{X_v^{(k)}(t-)}^{(k),w}(t-) - Q_{X_v^{(k)}(t-)+1}^{(k),w}(t-)},
\end{equation}
of assigning the task to a server $v \in \cN_w$ in system $k = 1,2$. Let $\hat{p}_v := \min\left( p_v^{(1)}, p_v^{(2)} \right)$ be the shared probability mass. For the sake of notation, assume that the servers in $\cN_w$ are ordered and correspond to the integers $\left\{ 1, 2, \dots, d_w \right\}$.  Let $U_t \in [0, 1]$ be a uniform random variable, independent of any other processes and independent across arrival epochs, and which is shared between the two systems. Then, in system $k$, assign the task to server $v \in \cN_w$ if and only if
\begin{equation}
    U_t \in \left[ \sum_{v' = 1}^{v-1} \hat{p}_{v'}, \sum_{v' = 1}^v \hat{p}_{v'} \right) \cup \left[ \sum_{v' \in \cN_w} \hat{p}_{v'} + \sum_{v' = 1}^{v-1} \left( p_{v'}^{(k)} - \hat{p}_{v'} \right), \sum_{v' \in \cN_w} \hat{p}_{v'} + \sum_{v' = 1}^v \left( p_{v'}^{(k)} - \hat{p}_{v'} \right) \right).
\end{equation}
Note that the probability to assign to a server $v \in \cN_w$ in system $k$ is exactly equal to $p_v^{(k)}$.

To verify that the stochastic coupling maintains the ordering of queue lengths, note that if $U_t < \sum_{v' \in \cN_w} \hat{p}_{v'}$, then the task is routed to the same server in both systems by the construction above. Thus, in this case, $X_{v'}^{(1)}(t) \leq X_{v'}^{(2)}(t)$ for all $v' \in V$ also after the arrival.

Next, consider instead that $U_t \geq \sum_{v' \in \cN_w} \hat{p}_{v'}$. Then, the task is routed to two different servers in both systems. Let $v \in \cN_w$ be the server the task is routed to in system 1. Note that it does not matter to which server the task is routed to in system 2, since its queue length will only increase. By the construction above, it must hold that $p_v^{(1)} > \hat{p}_v = p_v^{(2)}$. We claim that this implies that $X_v^{(1)}(t-) < X_v^{(2)}(t-)$. To see why, suppose that $X_v^{(1)}(t-) = X_v^{(2)}(t-)$ instead. Note that
\begin{equation}
\label{eq:q_ordered}
    Q_i^{(1),w}(t-) = \sum_{v' \in \cN_w} \mathbbm{1}\{ X_{v'}^{(1)}(t-) \geq i \} \leq \sum_{v' \in \cN_w} \mathbbm{1}\{ X_{v'}^{(2)}(t-) \geq i \} = Q_i^{(2),w}(t-),
\end{equation}
for all $i \in \N$ and $w \in W$ since $X_{v'}^{(1)}(t-) \leq X_{v'}^{(2)}(t-)$ for all $v' \in V$. Then, by Lemma \ref{lem:asg_monotone},
\begin{equation}
    p_v^{(1)}
    = \frac{1}{d_w} \frac{q_{X_v^{(1)}(t-)}^{(1),w}(t-)^d - q_{X_v^{(1)}(t-)+1}^{(1),w}(t-)^d}{q_{X_v^{(1)}(t-)}^{(1),w}(t-) - q_{X_v^{(1)}(t-)+1}^{(1),w}(t-)}
    \leq \frac{1}{d_w} \frac{q_{X_v^{(2)}(t-)}^{(2),w}(t-)^d - q_{X_v^{(2)}(t-)+1}^{(2),w}(t-)^d}{q_{X_v^{(2)}(t-)}^{(2),w}(t-) - q_{X_v^{(2)}(t-)+1}^{(2),w}(t-)}
    = p_v^{(2)},
\end{equation}
which is a contradiction. Hence, it must be that $X_v^{(1)}(t-) < X_v^{(2)}(t-)$ and therefore $X_{v'}^{(1)}(t) \leq X_{v'}^{(2)}(t)$ for all $v' \in V$ also after the arrival, which completes the proof of the proposition.
\end{proof}

\begin{proof}[Proof of Theorem \ref{thm:bound_mixingtime}]
We couple the two copies of the Markov process according to Proposition \ref{prop:monotone_start} such that $X_v^{(1)}(t) \leq X_v^{(2)}(t)$ for all $v \in V$ and $t \geq 0$, almost surely. This implies that $q_i^{(2),w}(t) \geq q_i^{(1),w}(t)$ for all $w \in W$ and $q_i^{(2)}(t) \geq q_i^{(1)}(t)$ for all $i \in \N$ and $t \geq 0$ by \eqref{eq:q_ordered}. Throughout, we will denote $\Delta_i(t) := q_i^{(2)}(t) - q_i^{(1)}(t)$. Let $\theta := \min\left( 1 / \left( 2 \rho_0 d \right), \rho_0 \right)$ and define $V(t) := \sum_{i = 1}^\infty \theta^i \sum_{j = i}^\infty \Delta_j(t)$. Then, by the monotone convergence theorem,
\begin{equation}
\begin{aligned}
\label{eq:mixingtime_derivative}
    \frac{d}{dt} \E\left[ V(t) \right]
    &= \frac{d}{dt} \sum_{i = 1}^\infty \theta^i \E\left[ \sum_{j = i}^\infty \Delta_j(t) \right]
    = \sum_{i = 1}^\infty \theta^i \frac{d}{dt} \E\left[ \sum_{j = i}^\infty \Delta_j(t) \right] \\
    &= \sum_{i = 1}^\infty \theta^i \E\left[ \frac{\lambda}{M} \sum_{w \in W} \left( q_{i-1}^{(2),w}(t)^d - q_{i-1}^{(1),w}(t)^d \right) - \Delta_i(t) \right] \\
    &\leq \sum_{i = 1}^\infty \theta^i \E\left[ \frac{\lambda d}{M} \sum_{w \in W} \left( q_{i-1}^{(2),w}(t) - q_{i-1}^{(1),w}(t) \right) - \Delta_i(t) \right] \\
    &\leq \sum_{i = 1}^\infty \theta^i \left( \theta \rho_0 d - 1 \right) \E\left[ \Delta_i(t) \right]
    \leq - \frac{1}{2} \sum_{i = 1}^\infty \theta^i \E\left[ \Delta_i(t) \right]
\end{aligned}
\end{equation}
where we use the fact that $\sum_{i = 1}^\infty \theta^i \E\left[ \Delta_i(t) \right]$ converges uniformly since $\theta < 1$ to exchange derivative and sum in the second equality, Lemma \ref{lem:derivative_qsum} in the third equality, the mean value theorem in the first inequality and the definition of $\theta$ in the third inequality. The second inequality follows because
\begin{equation}
\begin{multlined}
    \frac{\lambda N}{M} \sum_{w \in W} \left( q_i^{(2),w}(t) - q_i^{(1),w}(t) \right)
    = \frac{\lambda N}{M} \sum_{w \in W} \sum_{\substack{v \in \cN_w \;:\; X_v^{(2)}(t) \geq i \\ X_v^{(1)}(t) \leq i-1}} \frac{1}{d_w} \\
    = \sum_{\substack{v \in V \;:\; X_v^{(2)}(t) \geq i \\ X_v^{(1)}(t) \leq i-1}} \frac{\lambda N}{M} \sum_{w \in \cN_v} \frac{1}{d_w}
    \leq \sum_{\substack{v \in V \;:\; X_v^{(2)}(t) \geq i \\ X_v^{(1)}(t) \leq i-1}} \rho_0
    = \rho_0 \left( Q_i^{(2)}(t) - Q_i^{(1)}(t) \right).
\end{multlined}
\end{equation}
Next, we find a lower bound on $\sum_{i = 1}^\infty \theta^i \E\left[ \Delta_i(t) \right]$ in terms of $\E\left[ V(t) \right]$. Note that
\begin{equation}
\begin{multlined}
    \E\left[ V(t) \right]
    = \E\left[ \sum_{i = 1}^\infty \sum_{j = i}^\infty \theta^i \Delta_j(t) \right]
    = \E\left[ \sum_{j = 1}^\infty \sum_{i = 1}^j \theta^i \Delta_j(t) \right]
    = \E\left[ \sum_{i = 1}^\infty \frac{\theta \left( 1 - \theta^i \right)}{1 - \theta} \Delta_i(t) \right],
\end{multlined}
\end{equation}
and hence, again by the monotone convergence theorem,
\begin{equation}
\label{eq:mixingtime_boundv}
    \theta \sum_{i = 1}^\infty \E\left[ \Delta_i(t) \right]
    \leq \E\left[ V(t) \right]
    \leq \frac{\theta}{1 - \theta} \sum_{i = 1}^\infty \E\left[ \Delta_i(t) \right].
\end{equation}
Therefore, to find a lower bound on $\sum_{i = 1}^\infty \theta^i \E\left[ \Delta_i(t) \right]$ in terms of $\E\left[ V(t) \right]$, it is sufficient to find a lower bound in terms of $\sum_{i = 1}^\infty \E\left[ \Delta_i(t) \right]$. Let $\eta := \sum_{i = 1}^\infty \E\left[ \Delta_i(t) \right]$. Note that $\E\left[ \Delta_i(t) \right] \leq \E\left[ q_i^{(2)}(t) \right] \leq \rho_0^i$ by Corollary \ref{cor:bound_steadystate}. Thus, a lower bound on $\sum_{i = 1}^\infty \theta^i \E\left[ \Delta_i(t) \right]$ is given by the primal and dual pair
\begin{equation}
\begin{aligned}
    (P) \; \min_{\boldsymbol{x}}& \quad \sum_{i = 1}^\infty \theta^i x_i & (D) \; \max_{z,\boldsymbol{y}}& \quad \eta z - \sum_{i = 1}^\infty \rho_0^i y_i \\
    \text{s.t.}& \quad \begin{aligned}
        \sum_{i = 1}^\infty x_i &= \eta \\
        0 \leq x_i &\leq \rho_0^i
    \end{aligned} & \text{s.t.}& \quad \begin{aligned}
        z - y_i &\leq \theta^i \\
        y_i &\geq 0.
    \end{aligned}
\end{aligned}
\end{equation}
Fix any $i_0 \in \N$. A feasible solution to the dual is $y_i = 0$ for $i < i_0$, $y_i = \theta^{i_0} - \theta^i$ for $i \geq i_0$, and $z = \theta^{i_0}$. As any dual solution provides a lower bound to any primal solution by weak duality, it follows that
\begin{equation}
    \sum_{i = 1}^\infty \theta^i \E\left[ \Delta_i(t) \right]
    \geq \left( \eta -  \sum_{i = i_0}^\infty \rho_0^i \right) \theta^{i_0} + \sum_{i = i_0}^\infty \rho_0^i \theta^i
    = \left( \eta - \frac{\rho_0^{i_0}}{1 - \rho_0} \right) \theta^{i_0} + \frac{\rho_0^{i_0} \theta^{i_0}}{1 - \rho_0 \theta}.
\end{equation}
Now, let $i_0 := \left\lceil \ln(( 1 - \rho_0) \eta) / \ln\left( \rho_0 \right) \right\rceil$. Note that $i_0 \in \N$ since $\eta \leq \rho_0 / (1 - \rho_0)$ and $\rho_0 < 1$. Then,
\begin{equation}
\begin{multlined}
    \sum_{i = 1}^\infty \theta^i \E\left[ \Delta_i(t) \right]
    \geq \left( \eta - \frac{\rho_0^{\ln(( 1 - \rho_0) \eta) / \ln(\rho_0)}}{1 - \rho_0} \right) \theta^{i_0} + \frac{\rho_0^{i_0} \theta^{i_0}}{1 - \rho_0 \theta}
    = \left( \eta - \frac{( 1 - \rho_0) \eta}{1 - \rho_0} \right) \theta^{i_0} + \frac{\rho_0^{i_0} \theta^{i_0}}{1 - \rho_0 \theta} \\
    = \frac{(\rho_0 \theta)^{i_0}}{1 - \rho_0 \theta}
    \geq \frac{\rho_0 \theta}{1 - \rho_0 \theta} \left( \rho_0 \theta \right)^{\ln(( 1 - \rho_0) \eta) / \ln(\rho_0)}
    = \frac{\rho_0 \theta}{1 - \rho_0 \theta} \left( (1 - \rho_0) \eta \right)^{\ln\left( \rho_0 \theta \right) / \ln\left( \rho_0 \right)}.
\end{multlined}
\end{equation}
Let $\alpha := \ln(\theta) / \ln(\rho_0) \geq 1$. The equation above and \eqref{eq:mixingtime_boundv} imply that
\begin{equation}
    \sum_{i = 1}^\infty \theta^i \E\left[ \Delta_i(t) \right]
    \geq \frac{\rho_0 \theta}{1 - \rho_0 \theta} ((1 - \rho_0) \eta)^{1 + \alpha}
    \geq \frac{\rho_0 \theta}{1 - \rho_0 \theta} \left( \frac{(1 - \rho_0) (1 - \theta)}{\theta} \E\left[ V(t) \right] \right)^{1 + \alpha}.
\end{equation}
Thus, we have found a valid lower bound. We apply the lower bound to \eqref{eq:mixingtime_derivative} to find $\frac{d}{dt} \E\left[ V(t) \right] \leq -c_1 \E\left[ V(t) \right]^{1 + \alpha}$, where $c_1 := \rho_0 \theta \left( (1 - \rho_0) (1 - \theta) / \theta \right)^{1 + \alpha} / (2 (1 - \rho_0 \theta)) > 0$. This implies that
\begin{equation}
    \E\left[ V(t) \right]
    \leq \frac{1}{\left( \E\left[ V(0) \right]^{-\alpha} + c_1 \alpha t \right)^{1 / \alpha}}
    \leq \frac{1}{\left( c_2^{-\alpha} + c_1 \alpha t \right)^{1 / \alpha}},
\end{equation}
where $c_2 := \rho_0 \theta / ((1 - \rho_0) (1 - \theta)) > 0$ and we use the fact that $\E\left[ V(0) \right] \leq \theta \sum_{i = 1}^\infty \E\left[ \Delta_i(t) \right] / (1 - \theta) \leq \theta \sum_{i = 1}^\infty \rho_0^i / (1 - \theta) = c_2$ by \eqref{eq:mixingtime_boundv} and Corollary \ref{cor:bound_steadystate} in the second inequality. Hence, by \eqref{eq:mixingtime_boundv},
\begin{equation}
    \sum_{i = 1}^\infty \E\left[ \Delta_i(t) \right]
    \leq \frac{\E\left[ V(t) \right]}{\theta}
    \leq \frac{1}{\theta \left( c_2^{-\alpha} + c_1 \alpha t \right)^{1 / \alpha}},
\end{equation}
which completes the proof of the theorem.
\end{proof}

\subsection{Proof of the process-level limit}
\label{ssec:process-proof}

\begin{lemma}
\label{lem:q_martingale}
Fix any $i \in \N$ and $w \in W$. The process
\begin{equation}
    M_i^w(t)
    := Q_i^w(t) - Q_i^w(0) - \int_0^t \frac{\lambda N}{M} \sum_{\substack{v \in \cN_w \\ X_v(s) = i-1}} \sum_{w' \in \cN_v} \frac{q_{i-1}^{w'}(s)^d - q_i^{w'}(s)^d}{Q_{i-1}^{w'}(s) - Q_i^{w'}(s)} - \left( Q_i^w(s) - Q_{i+1}^w(s) \right) \diff s,
\end{equation}
is a square-integrable martingale started at zero. Moreover, the quadratic variation $\left[ M_i^w \right]_t$ satisfies
\begin{equation}
    \E\left[ \left[ M_i^w \right]_t \right]
    = \E\left[ \int_0^t \frac{\lambda N}{M} \sum_{\substack{v \in \cN_w \\ X_v(s) = i-1}} \sum_{w' \in \cN_v} \frac{q_{i-1}^{w'}(s)^d - q_i^{w'}(s)^d}{Q_{i-1}^{w'}(s) - Q_i^{w'}(s)} + \left( Q_i^w(s) - Q_{i+1}^w(s) \right) \diff s \right].
\end{equation}
\end{lemma}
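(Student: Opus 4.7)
The plan is to view $Q_i^w(t)$ as the difference of two counting processes and invoke the standard Doob--Meyer/Watanabe characterization of the compensator for a pure jump process. Concretely, I would write $Q_i^w(t) = Q_i^w(0) + A_i^w(t) - D_i^w(t)$, where $A_i^w$ counts the arrival epochs at which some $v \in \cN_w$ with $X_v(s-)=i-1$ receives a task (upward jump of $Q_i^w$), and $D_i^w$ counts the service completions at servers $v \in \cN_w$ with $X_v(s-)=i$ (downward jump of $Q_i^w$). Because arrivals and potential departures are driven by independent Poisson processes in the construction used in the proof of Lemma~\ref{lem:derivative_qsum}, the counting processes $A_i^w$ and $D_i^w$ almost surely share no jump times.

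Next I would identify the predictable stochastic intensities. Each server has unit service rate, so the down-intensity at time $s$ is simply $Q_i^w(s)-Q_{i+1}^w(s)$, the number of servers in $\cN_w$ currently at queue length exactly $i$. For the up-intensity, condition on a task arriving at some dispatcher $w' \in \cN_v$ of a candidate server $v \in \cN_w$ with $X_v(s-)=i-1$. The rate at which tasks arrive at $w'$ is $\lambda N/M$, and the combinatorial argument already used in the proof of Lemma~\ref{lem:derivative_qsum} shows that, conditional on such an arrival, the probability that the task is routed to this specific $v$ equals
\begin{equation*}
    \frac{q_{i-1}^{w'}(s)^d - q_i^{w'}(s)^d}{Q_{i-1}^{w'}(s) - Q_i^{w'}(s)},
\end{equation*}
since $q_{i-1}^{w'}(s)^d - q_i^{w'}(s)^d$ is the probability that all $d$ samples from $\cN_{w'}$ lie in queues of length at least $i-1$ but not all in queues of length at least $i$, split uniformly by symmetry among the $Q_{i-1}^{w'}(s)-Q_i^{w'}(s)$ eligible servers. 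Summing over $v \in \cN_w$ with $X_v(s-)=i-1$ and over $w' \in \cN_v$ reproduces the compensator written in the statement.

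By the standard characterization of compensators for counting processes, it then follows that $M_i^w$ is a local martingale starting at zero. Square-integrability on any finite interval $[0,t]$ is immediate: the up-intensity is dominated by the total system arrival rate $\lambda N$, the down-intensity by the constant $d_w^N$, and both are finite, so $\E[A_i^w(t)+D_i^w(t)]<\infty$. For the quadratic variation, the disjointness of the jumps of $A_i^w$ and $D_i^w$ gives $[M_i^w]_t = [A_i^w]_t + [D_i^w]_t$, and since each counting process has jumps of unit size, $[A_i^w]_t = A_i^w(t)$ and $[D_i^w]_t = D_i^w(t)$. Taking expectations and replacing each counting process by its compensator produces the stated formula for $\E[[M_i^w]_t]$. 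The only delicate step is pinning down the JSQ($d$) assignment probability correctly; once that is in hand, the remainder is routine jump-process martingale calculus and I do not anticipate a serious obstacle.
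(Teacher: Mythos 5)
Your proof is correct, and it takes a genuinely different route from the paper's. The paper proceeds by a bare-hands generator computation: it represents all events through a single Poisson process of rate $(\lambda+1)N$ with thinning, estimates $\E[Q_i^w(t+h)-Q_i^w(t)\mid \cF_t]$ with an explicit $O(h^2)$ error for the two-or-more-jumps event, passes to the limit $h\downarrow 0$ to obtain $\frac{d}{dt}\E[Q_i^w(t)\mid\cF_s]$, and then invokes the fundamental theorem of calculus and Fubini's theorem to verify $\E[M_i^w(t)\mid\cF_s]=M_i^w(s)$; for the quadratic variation it observes that $[M_i^w]_t=[Q_i^w]_t$ (the compensator being continuous and finite-variation) and counts the unit-sized jumps by a parallel calculation. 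You instead decompose $Q_i^w$ into two counting processes $A_i^w - D_i^w$, identify their predictable stochastic intensities directly, and invoke the Watanabe/Brémaud compensator characterization to deduce both the martingale property and the quadratic-variation formula (using that $A_i^w$ and $D_i^w$ share no jump times and have unit jump size, so $[M_i^w]_t=A_i^w(t)+D_i^w(t)$). The combinatorial content — the routing probability for a server of queue length $i-1$ under JSQ($d$), and the unit service rates — is identical in both proofs. Your route is cleaner and more conceptual, at the price of appealing to the theory of point-process intensities, while the paper's argument is more self-contained. One small point worth making explicit in a polished write-up: from $\E[[M_i^w]_t]<\infty$, the passage from ``local martingale'' to ``square-integrable martingale'' deserves a sentence (e.g., via the Burkholder--Davis--Gundy inequality, or the standard fact that a counting-process martingale with integrable compensator on $[0,t]$ is an $L^2$-martingale there), but this is routine and not a gap.
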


The proof of Lemma \ref{lem:q_martingale} is provided in Appendix \ref{sec:q_martingale}.

\begin{proof}[Proof of Theorem \ref{thm:process_level_limit}]
Fix any $i \in \N$, $w \in W$ and $t \geq 0$ and let $d_i^w(t) := \lvert q_i^w(t) - \bar{q}_i(t) \rvert$. Then,
\begin{equation}
\begin{multlined}
\label{eq:d_bound}
    d_i^w(t)
    \leq \lambda \int_0^t \Big\lvert \frac{1}{d_w} \frac{N}{M} \sum_{\substack{v \in \cN_w \\ X_v(s) = i-1}} \sum_{w' \in \cN_v} \frac{q_{i-1}^{w'}(s)^d - q_i^{w'}(s)^d}{Q_{i-1}^{w'}(s) - Q_i^{w'}(s)} - \left( \bar{q}_{i-1}(s)^d - \bar{q}_i(s)^d \right) \Big\rvert \diff s \\ + \int_0^t \left\lvert \left( q_i^w(s) - q_{i+1}^w(s) \right) - \left( \bar{q}_i(s) - \bar{q}_{i+1}(s) \right) \right\rvert \diff s + d_i^w(0) + \frac{\lvert M_i^w(t) \rvert}{d_w},
\end{multlined}
\end{equation}
where $M_i^w(t)$ is a square-integrable martingale as defined in Lemma \ref{lem:q_martingale}. We proceed by bounding the terms on the right-hand side. The term in the first integral in \eqref{eq:d_bound} is upper bounded by
\begin{equation}
\begin{multlined}
    \Big\lvert \frac{1}{d_w} \frac{N}{M} \sum_{\substack{v \in \cN_w \\ X_v(s) = i-1}} \sum_{w' \in \cN_v} \frac{q_{i-1}^{w'}(s)^d - q_i^{w'}(s)^d}{Q_{i-1}^{w'}(s) - Q_i^{w'}(s)} - \left( \bar{q}_{i-1}(s)^d - \bar{q}_i(s)^d \right) \Big\rvert \\
    \leq \Big\lvert \frac{1}{d_w} \sum_{\substack{v \in \cN_w \\ X_v(s) = i-1}} \sum_{w' \in \cN_v} \frac{N}{M} \frac{q_{i-1}^{w'}(s)^d - q_i^{w'}(s)^d}{Q_{i-1}^{w'}(s) - Q_i^{w'}(s)} - \left( q_{i-1}^w(s) - q_i^w(s) \right) \frac{\bar{q}_{i-1}(s)^d - \bar{q}_i(s)^d}{\bar{q}_{i-1}(s) - \bar{q}_i(s)} \Big\rvert \\ + \left\lvert \left( q_{i-1}^w(s) - q_i^w(s) \right) - \left( \bar{q}_{i-1}(s) - \bar{q}_i(s) \right) \right\rvert \frac{\bar{q}_{i-1}(s)^d - \bar{q}_i(s)^d}{\bar{q}_{i-1}(s) - \bar{q}_i(s)} \\
    \leq \frac{1}{d_w} \sum_{\substack{v \in \cN_w \\ X_v(s) = i-1}} \Big\lvert \sum_{w' \in \cN_v} \frac{N}{M} \frac{q_{i-1}^{w'}(s)^d - q_i^{w'}(s)^d}{Q_{i-1}^{w'}(s) - Q_i^{w'}(s)} - \frac{\bar{q}_{i-1}(s)^d - \bar{q}_i(s)^d}{\bar{q}_{i-1}(s) - \bar{q}_i(s)} \Big\rvert
    + d \left( d_{i-1}^w(s) + d_i^w(s) \right),
\end{multlined}
\end{equation}
where we use the triangle inequality in the first inequality and the mean value theorem in the second inequality. The first term on the right-hand side above is further upper bounded by
\begin{equation}
\begin{multlined}
\label{eq:phi1}
    \frac{1}{d_w} \sum_{\substack{v \in \cN_w \\ X_v(s) = i-1}} \left\lvert \sum_{w' \in \cN_v} \frac{N}{M} \frac{q_{i-1}^{w'}(s)^d - q_i^{w'}(s)^d}{Q_{i-1}^{w'}(s) - Q_i^{w'}(s)} - \frac{\bar{q}_{i-1}(s)^d - \bar{q}_i(s)^d}{\bar{q}_{i-1}(s) - \bar{q}_i(s)} \right\rvert \\
    \leq \frac{1}{d_w} \sum_{\substack{v \in \cN_w \\ X_v(s) = i-1}} \frac{N}{M} \sum_{w' \in \cN_v} \frac{1}{d_{w'}} \left\lvert  \frac{q_{i-1}^{w'}(s)^d - q_i^{w'}(s)^d}{q_{i-1}^{w'}(s) - q_i^{w'}(s)} - \frac{\bar{q}_{i-1}(s)^d - \bar{q}_i(s)^d}{\bar{q}_{i-1}(s) - \bar{q}_i(s)} \right\rvert \\
    + \frac{1}{d_w} \sum_{\substack{v \in \cN_w \\ X_v(s) = i-1}} \left\lvert \frac{N}{M} \sum_{w' \in \cN_v} \frac{1}{d_{w'}} - 1 \right\rvert \frac{\bar{q}_{i-1}(s)^d - \bar{q}_i(s)^d}{\bar{q}_{i-1}(s) - \bar{q}_i(s)} \\
    \leq \frac{1}{d_w} \sum_{v \in \cN_w} \frac{N}{M} \sum_{w' \in \cN_v} \frac{K}{d_{w'}} \left( d_{i-1}^{w'}(s) + d_i^{w'}(s) \right)
    + d \phi(G) \left( q_{i-1}^w(s) - q_i^w(s) \right),
\end{multlined}
\end{equation}
where we use the triangle inequality in the first inequality and Lemma \ref{lem:asg_lipschitz} and the mean value theorem in the second inequality. Then, summing the first term on the right-hand side over $w \in W$,
\begin{equation}
\begin{multlined}
    \sum_{w \in W} \frac{1}{d_w} \sum_{v \in \cN_w} \frac{N}{M} \sum_{w' \in \cN_v} \frac{K}{d_{w'}} \left( d_{i-1}^{w'}(s) + d_i^{w'}(s) \right)
    = \sum_{w \in W} \frac{1}{d_w} \sum_{w' \in W} \frac{N}{M} \sum_{v \in \cN_w \cap \cN_{w'}} \frac{K}{d_{w'}} \left( d_{i-1}^{w'}(s) + d_i^{w'}(s) \right) \\
    = \sum_{w' \in W} \frac{1}{d_{w'}} \sum_{w \in W} \frac{N}{M} \sum_{v \in \cN_w \cap \cN_{w'}} \frac{K}{d_w} \left( d_{i-1}^{w'}(s) + d_i^{w'}(s) \right)
    = \sum_{w' \in W} \frac{1}{d_{w'}} \sum_{v \in \cN_{w'}} \frac{N}{M} \sum_{w \in \cN_v} \frac{K}{d_w} \left( d_{i-1}^{w'}(s) + d_i^{w'}(s) \right) \\
    \leq \sum_{w' \in W} \frac{1}{d_{w'}} \sum_{v \in \cN_{w'}} \frac{\rho_0 K}{\lambda} \left( d_{i-1}^{w'}(s) + d_i^{w'}(s) \right)
    = \sum_{w' \in W} \frac{\rho_0 K}{\lambda} \left( d_{i-1}^{w'}(s) + d_i^{w'}(s) \right).
\end{multlined}
\end{equation}
The term in the second integral in \eqref{eq:d_bound} is bounded by
\begin{equation}
\begin{multlined}
    \left\lvert \left( q_i^w(s) - q_{i+1}^w(s) \right) - \left( \bar{q}_i(s) - \bar{q}_{i+1}(s) \right) \right\rvert
    \leq d_i^w(s) + d_{i+1}^w(s).
\end{multlined}
\end{equation}
Therefore, putting the above together, by Jensen's inequality and the Cauchy-Schwartz inequality,
\begin{equation}
\begin{multlined}
    \left( \sum_{w \in W} d_i^w(t) \right)^2
    \leq \Bigg( \sum_{w \in W} \bigg( \int_0^t (\rho_0 K + d) \left( d_{i-1}^w(s) + d_i^w(s) \right) + d \phi(G) \left( q_{i-1}^w(s) - q_i^w(s) \right) \\
    + \left( d_i^w(s) + d_{i+1}^w(s) \right) \diff s + d_i^w(0) + \frac{\lvert M_i^w(t) \rvert}{d_w} \bigg) \Bigg)^2 \\
    \leq 6 \left( \int_0^t \sum_{w \in W} c_1 d_{i-1}^w(s) \diff s \right)^2 + 6 \left( \int_0^t \sum_{w \in W} c_1 d_i^w(s) \diff s \right)^2 + 6 \left( \int_0^t \sum_{w \in W} d_{i+1}^w(s) \diff s \right)^2 \\
    + 6 \left( \int_0^t \sum_{w \in W} d \phi(G) \left( q_{i-1}^w(s) - q_i^w(s) \right) \diff s \right)^2 + 6 \left( \sum_{w \in W} d_i^w(0) \right)^2 + 6 \left( \sum_{w \in W} \frac{\lvert M_i^w(t) \rvert}{d_w} \right)^2 \\
    \leq 6t c_1^2 \int_0^t \left( \sum_{w \in W} d_{i-1}^w(s) \right)^2 \diff s + 6t c_1^2 \int_0^t \left( \sum_{w \in W} d_i^w(s) \right)^2 \diff s + 6t \int_0^t \left( \sum_{w \in W} d_{i+1}^w(s) \right)^2 \diff s \\
    + 6 M t d^2 \phi(G)^2 \int_0^t \sum_{w \in W} \left( q_{i-1}^w(s) - q_i^w(s) \right)^2 \diff s + 6 \left( \sum_{w \in W} d_i^w(0) \right)^2 + 6 M \sum_{w \in W} \frac{M_i^w(t)^2}{d_w^2},
\end{multlined}
\end{equation}
where $c_1 := \rho_0 K + d + 1$. Then, by the monotone convergence theorem,
\begin{equation}
\begin{multlined}
    \sup_{s \in [0, t]} \sum_{i = 1}^\infty \left( \frac{1}{M} \sum_{w \in W} d_i^w(s) \right)^2
    \leq c_2 t \int_0^t \sup_{u \in [0, s]} \sum_{i = 1}^\infty \left( \frac{1}{M} \sum_{w \in W} d_i^w(u) \right)^2 \diff s \\
    + 6 t^2 d^2 \phi(G)^2
    + 6 \sum_{i = 1}^\infty \left( \frac{1}{M} \sum_{w \in W} d_i^w(0) \right)^2
    + \frac{6}{M} \sum_{i = 1}^\infty \sum_{w \in W} \sup_{s \in [0, t]} \frac{M_i^w(s)^2}{d_w^2},
\end{multlined}
\end{equation}
where $c_2 := 6 \left( 2 c_1^2 + 1 \right)$. Hence, by Gr\"onwall's inequality,
\begin{equation}
\label{eq:diw_bounded}
    \sup_{s \in [0, t]} \sum_{i = 1}^\infty \left( \frac{1}{M} \sum_{w \in W} d_i^w(s) \right)^2
    \leq 6 e^{c_2 t^2} \left( t^2 d^2 \phi(G)^2 + \sum_{i = 1}^\infty \left( \frac{1}{M} \sum_{w \in W} d_i^w(0) \right)^2 + \sum_{i = 1}^\infty \frac{1}{M} \sum_{w \in W} \sup_{s \in [0, t]} \frac{M_i^w(s)^2}{d_w^2} \right).
\end{equation}
This almost completes the proof of the theorem. Now, by Jensen's inequality,
\begin{equation}
\begin{multlined}
\label{eq:di_bounded}
    \sum_{i = 1}^\infty \left( q_i(t) - \bar{q}_i(t) \right)^2
    \leq 2 \sum_{i = 1}^\infty \left( \frac{1}{M} \sum_{w \in W} q_i^w(t) - q_i(t) \right)^2 + 2 \sum_{i = 1}^\infty \left( \frac{1}{M} \sum_{w \in W} q_i^w(t) - \bar{q}_i(s) \right)^2 \\
    \leq 2 \phi(G)^2 \sum_{i = 1}^\infty q_i(t)^2 + 2 \sum_{i = 1}^\infty \left( \frac{1}{M} \sum_{w \in W} d_i^w(t) \right)^2 \\
    \leq 2 \phi(G)^2 \left( \frac{N_a(t)}{N} + \sum_{i = 1}^\infty q_i(0)^2 \right) + 2 \sum_{i = 1}^\infty \left( \frac{1}{M} \sum_{w \in W} d_i^w(t) \right)^2,
\end{multlined}
\end{equation}
where $N_a(t)$ denotes the number of arrivals until time $t$ and the second inequality follows because
\begin{equation}
\begin{multlined}
\label{eq:phi2}
    \left\lvert \frac{N}{M} \sum_{w \in W} q_i^w(t) - Q_i(t) \right\rvert
    = \left\lvert \frac{N}{M} \sum_{w \in W} \sum_{\substack{v \in \cN_w \\ X_v(t) \geq i}} \frac{1}{d_w} - Q_i(t) \right\rvert
    = \left\lvert \sum_{\substack{v \in V \\ X_v(t) \geq i}} \frac{N}{M} \sum_{w \in \cN_v} \frac{1}{d_w} - \sum_{\substack{v \in V \\ X_v(t) \geq i}} 1 \right\rvert \\
    \leq \sum_{\substack{v \in V \\ X_v(t) \geq i}} \left\lvert \frac{N}{M} \sum_{w \in \cN_v} \frac{1}{d_w} - 1 \right\rvert
    \leq \sum_{\substack{v \in V \\ X_v(t) \geq i}} \phi(G)
    = \phi(G) Q_i(t).
\end{multlined}
\end{equation}
Then, \eqref{eq:diw_bounded} and \eqref{eq:di_bounded} together imply that
\begin{equation}
\begin{multlined}
\label{eq:gamma1}
    \E\left[ \sup_{s \in [0, t]} \sum_{i = 1}^\infty \left( q_i(s) - \bar{q}_i(s) \right)^2 \right]
    \leq 2 \phi(G)^2 \E\left[ \frac{N_a(t)}{N} + \sum_{i = 1}^\infty q_i(0)^2 \right] + 2 \E\left[ \sup_{s \in [0, t]} \sum_{i = 1}^\infty \left( \frac{1}{M} \sum_{w \in W} d_i^w(s) \right)^2 \right] \\
    \leq 2 \phi(G)^2 \left( \lambda t + \E\left[ \sum_{i = 1}^\infty q_i(0)^2 \right] \right) \\
    + 12 e^{c_2 t^2} \left( t^2 d^2 \phi(G)^2 + \E\left[ \sum_{i = 1}^\infty \left( \frac{1}{M} \sum_{w \in W} d_i^w(0) \right)^2 \right] + 4 t (\rho_0 d + 1 ) \gamma(G) \right),
\end{multlined}
\end{equation}
where we use Lemma \ref{lem:martingale_vanishes} in the second inequality. This completes the proof of the theorem.
\end{proof}

\subsection{Analysis of the steady-state}\label{ssec:steady-state-proof}

The mixing time bound above shows that the system is close to the steady-state at a large, but finite time, starting from the empty state. The process-level limit characterizes this sample path and proves that the system remains close to an ODE. Together with a standard global convergence result, this implies that the steady-state is close to the fixed point of the system of ODEs.

\begin{proof}[Proof of Theorem \ref{thm:finite_bound}]
Proposition \ref{prop:existence_steadystate} proves the first half of the theorem. To prove the second half, let $\boldsymbol{\bar{q}}(t)$ be the unique solution to the ODEs in Theorem \ref{thm:process_level_limit}, where $\boldsymbol{\bar{q}}(0) = 0$. Theorem 3.6 in \cite{Mitzenmacher01} shows that there exist constants $c_1, c_2 > 0$ (depending only on $\lambda$) such that
\begin{equation}
    \sum_{i = 1}^\infty \left( \bar{q}_i(t) - q_i^* \right)^2
    \leq \sum_{i = 1}^\infty \left\lvert \bar{q}_i(t) - q_i^* \right\rvert
    \leq c_1 e^{-c_2 t}
    \leq \frac{c_1}{1 + c_2 t}.
\end{equation}
Throughout, denote $\eta = \max\{ \phi(G)^2, \gamma(G) \}$. Let $\boldsymbol{X}^{(1)}(t)$ and $\boldsymbol{X}^{(2)}(t)$ be two copies of the Markov process, where $\boldsymbol{X}^{(1)}(0) = 0$ and $\boldsymbol{X}^{(2)}(0) \overset{d}{=} \boldsymbol{X}^{(2)}(\infty)$. Then, there exist constants $c_4, c_5, c_1', c_2', c_4' > 0$, $c_3 \geq 1$ and $0 < \alpha \leq 1$ (depending only on $\lambda$, $\rho_0$ and $d$) such that, for all $t \geq 1$,
\begin{equation}
\begin{multlined}
    \sum_{i = 1}^\infty \E\left[ \left( q_i^{(2)}(\infty) - q_i^* \right)^2 \right]
    = \sum_{i = 1}^\infty \E\left[ \left( q_i^{(2)}(t) - q_i^* \right)^2 \right] \\
    \leq 3 \sum_{i = 1}^\infty \E\left[ \left( q_i^{(2)}(t) - q_i^{(1)}(t) \right)^2 \right]
    + 3 \sum_{i = 1}^\infty \E\left[ \left( q_i^{(1)}(t) - \bar{q}_i(t) \right)^2 \right]
    + 3 \sum_{i = 1}^\infty \E\left[ \left( \bar{q}_i(t) - q_i^* \right)^2 \right] \\
    \leq \frac{3}{\left( c_4 + c_5 t \right)^\alpha} + 6 \phi(G)^2 \lambda t + 36 e^{c_3 t^2} \left( t^2 d^2 \phi(G)^2 + 4 t (\rho_0 d + 1 ) \gamma(G) \right) + \frac{3 c_1}{1 + c_2 t} \\
    \leq \frac{1}{\left( c_1' + c_2' t \right)^\alpha} + c_4' t^2 e^{c_3 t^2} \eta,
\end{multlined}
\end{equation}
where we use Jensen's inequality in the first inequality and Theorem \ref{thm:bound_mixingtime} and \ref{thm:process_level_limit} in the second inequality. We consider two cases. If $\ln\left(1 / \eta \right) \geq 2 c_3$, then let $t = \sqrt{\ln\left(1 / \eta \right) / (2 c_3)} \geq 1$ such that
\begin{equation}
    t^2 e^{c_3 t^2} \eta
    = \frac{\ln\left( 1 / \eta \right) \sqrt{\eta}}{2 c_3}
    \leq \frac{1}{c_3 \sqrt{\ln\left( 1 / \eta \right)}}
    \leq \frac{1}{\left( c_3 \sqrt{\ln\left( 1 / \eta \right)} \right)^\alpha}
\end{equation}
where we use that $\ln(1 / x) \sqrt{x} \leq 2 / \sqrt{\ln(1 / x)}$ for $0 \leq x \leq 1$ in the first inequality. Therefore,
\begin{equation}
    \sum_{i = 1}^\infty \E\left[ \left( q_i^{(2)}(\infty) - q_i^* \right)^2 \right]
    \leq \frac{1}{\left( c_1' + c_2' \sqrt{\ln\left(1 / \eta \right)} \right)^\alpha} + \frac{c_4'}{\left( c_3 \sqrt{\ln\left(1 / \eta \right)} \right)^\alpha}
    \leq \frac{c_3^\alpha + c_2^{\prime \alpha} c_4'}{\left( c_2' c_3 \sqrt{\ln\left(1 / \eta \right)} \right)^\alpha}.
\end{equation}
If instead $\ln\left(1 / \eta \right) < 2 c_3$, then let $t = 1$ such that
\begin{equation}
\begin{multlined}
    \sum_{i = 1}^\infty \E\left[ \left( q_i^{(2)}(\infty) - q_i^* \right)^2 \right]
    \leq \frac{1}{\left( c_1' + c_2' \right)^\alpha} + c_4' e^{c_3} \eta \\
    \leq \frac{\sqrt{2 c_3}}{\left( c_1' + c_2' \right)^\alpha \sqrt{\ln\left(1 / \eta \right)}} + \frac{c_4' e^{c_3}}{\sqrt{\ln\left(1 / \eta \right)}}
    \leq \frac{\sqrt{2 c_3} + c_4' e^{c_3} (c_1' + c_2')^\alpha}{\left( c_1' + c_2' \right)^\alpha \sqrt{\ln\left(1 / \eta \right)}},
\end{multlined}
\end{equation}
where we use that $x \leq 1 / \sqrt{\ln(1 / x)}$ for $0 \leq x \leq 1$ in the second inequality. This completes the proof of the theorem.
\end{proof}

\subsection{Verification for random bipartite geometric graphs}
\label{ssec:random-verification}

\begin{proof}[Proof of Corollary~\ref{cor:spatialgraph}]
Fix any $v \in V_N$ and $0 < \varepsilon \leq 1 / 2$. As each $w \in W_N$ is placed independently and uniformly at random, $d_v^N$ is distributed as a binomial random variable. Therefore, a Chernoff bound (see e.g. Corollary 2.3 in \cite{JLR00}) shows that
\begin{equation}
    \P\left( \left\lvert d_v^N - \E\left[ d_v^N \right] \right\rvert \geq \varepsilon \E\left[ d_v^N \right] \right)
    \leq 2 \exp\left( -\varepsilon^2 \E\left[ d_v^N \right] / 3 \right).
\end{equation}
A similar Chernoff bound holds for $w \in W_N$. Let $E_N$ denote the event that there exists a $v \in V_N$ such that $\left\lvert d_v^N - \E\left[ d_v^N \right] \right\rvert \geq \varepsilon \E\left[ d_v^N \right]$ or there exists a $w \in W_N$ such that $\left\lvert d_w^N - \E\left[ d_w^N \right] \right\rvert \geq \varepsilon \E\left[ d_w^N \right]$. Let $N_1$ be large enough such that $\varepsilon^2 \E\left[ d_v^N \right] / 3 \geq 3 \ln N$, $\varepsilon^2 \E\left[ d_w^N \right] / 3 \geq 3 \ln N$ and $\varepsilon^2 \E\left[ d_w^N \right] / 3 \geq 3 \ln M$ for all $N \geq N_1$. Then,
\begin{equation}
\begin{multlined}
    \P\left( E_N \right)
    \leq \sum_{v \in V_N} \P\left( \left\lvert d_v^N - \E\left[ d_v^N  \right] \right\rvert \geq \varepsilon \E\left[ d_v^N \right] \right) + \sum_{w \in W_N} \P\left( \left\lvert d_w^N - \E\left[ d_w^N \right] \right\rvert \geq \varepsilon \E\left[ d_w^N \right] \right) \\
    \leq 2 N \exp\left( -\varepsilon^2 \E\left[ d_v^N \right] / 3 \right) + 2 M \exp\left( -\varepsilon^2 \E\left[ d_w^N \right] / 3 \right) \\
    \leq 2 N \exp\left( -3 \ln N \right) + 2 M \exp\left( - \ln(M) - 2 \ln(N) \right)
    = \frac{4}{N^2},
\end{multlined}
\end{equation}
for all $N \geq N_1$. Hence, $\sum_{N = 1}^\infty \P\left( E_N \right) < \infty$ and the Borel-Cantelli lemma shows that, almost surely, there exists $N_2 < \infty$ such that $E_N$ does not occur for all $N \geq N_2$. This implies in particular that,
\begin{equation}
\begin{split}
    \frac{1 - \varepsilon}{1 + \varepsilon}
    = \frac{N}{M(N)} \frac{(1 - \varepsilon) \E\left[ d_v^N \right]}{(1 + \varepsilon) \E\left[ d_w^N \right]} 
   & \leq \frac{N}{M(N)} \frac{\min_{v \in V_N} d_v^N}{\max_{w \in W_N} d_w^N}
    \leq \frac{N}{M(N)} \sum_{w \in \cN_v} \frac{1}{d_w^N}\\
    &\leq \frac{N}{M(N)} \frac{\max_{v \in V_N} d_v^N}{\min_{w \in W_N} d_w^N} 
    \leq \frac{N}{M(N)} \frac{(1 + \varepsilon) \E\left[ d_v^N \right]}{(1 - \varepsilon) \E\left[ d_w^N \right]}
    = \frac{1 + \varepsilon}{1 - \varepsilon},
\end{split}
\end{equation}
for all $N \geq N_2$ and therefore
\begin{equation}
    \phi(G_N)
    := \max_{v \in V_N} \left\lvert \frac{N}{M(N)} \sum_{w \in W_N} \frac{1}{d_w^N} - 1 \right\rvert
    \leq \max\left( 1 - \frac{1 - \varepsilon}{1 + \varepsilon}, \frac{1 + \varepsilon}{1 - \varepsilon} - 1 \right)
    \leq \frac{2 \varepsilon}{1 - \varepsilon}
    \leq 4 \varepsilon,
\end{equation}
for all $N \geq N_2$. Also,
\begin{equation}
    \gamma(G_N)
    := \frac{1}{M(N)} \sum_{w \in W_N} \frac{1}{d_w^N}
    \leq \frac{1}{\min_{w \in W_N} d_w^N}
    \leq \frac{1}{(1 - \varepsilon) \E\left[ d_w^N \right]}
    \leq \frac{2}{\ln N},
\end{equation}
for all $N \geq N_2$. Note also that $\rho(G_N) \leq \lambda (1 + \phi(G_N)) \leq \lambda (1 + 4 \varepsilon) < 1$ for all $N \geq N_2$ and $\varepsilon$ small enough. Therefore, Theorem \ref{thm:finite_bound} completes the proof.
\end{proof}

\section{Numerical experiments}

We perform numerical experiments to complement the theoretical results. The experiments are in the scenario where $M(N) = N$ for simplicity. We simulate two types of graph sequences: random bipartite geometric graphs and random regular bipartite graphs. The random geometric graph is generated as described in its definition in Section~\ref{sec:spatialgraph}. The random regular bipartite graph is generated by fixing a degree $k$ upfront. Then, $k$ half-edges are created at each server $v \in V_N$ and task-type $w \in W_N$. The half-edges at the servers are connected to the half-edges at the task types by sequentially picking two available half-edges at random, one at the server side and one at the task-type side. and creating an edge between them. 
Although this may lead to multiple edges, the probability of this happening is negligible for large $N$. \\

\begin{figure}
\centering
\begin{subfigure}[b]{0.49\textwidth}
\centering
\includegraphics[width=\textwidth]{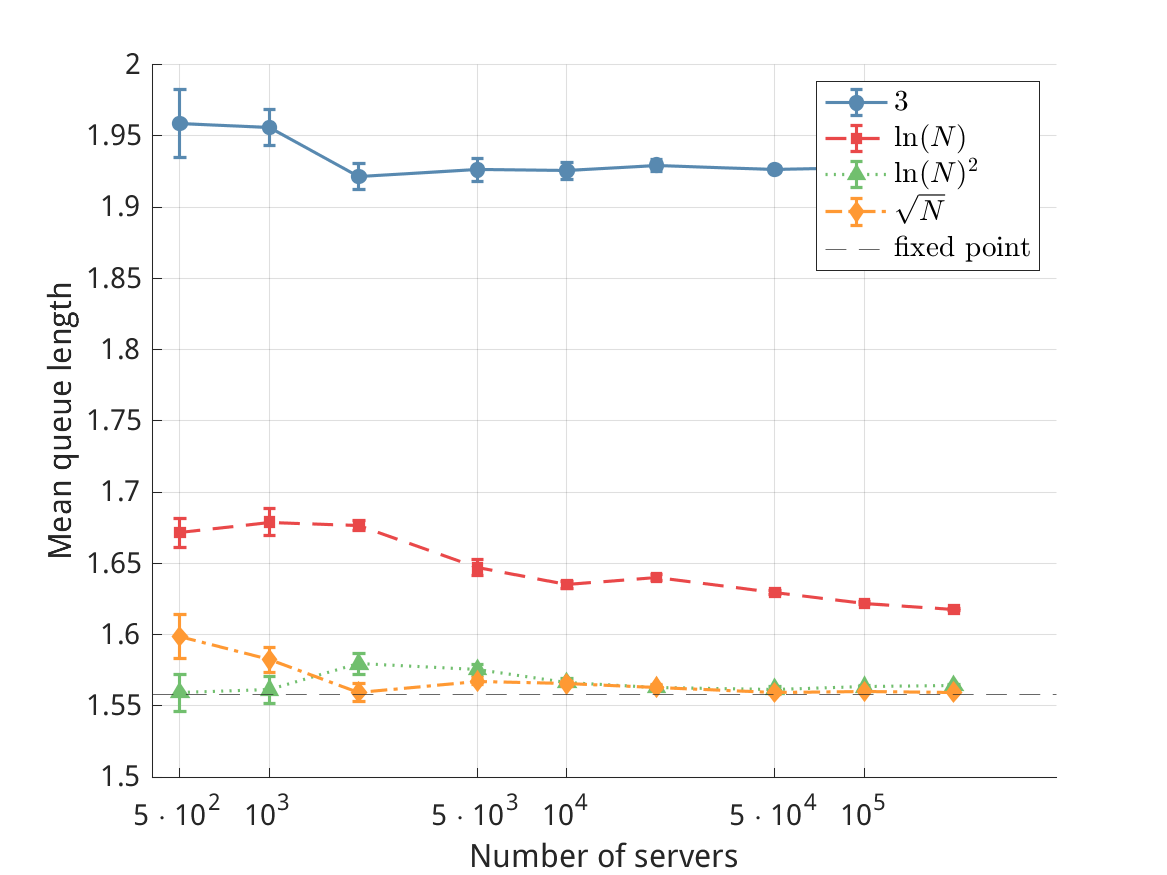}
\caption{Geometric graph}
\end{subfigure}
\hfill
\begin{subfigure}[b]{0.49\textwidth}
\centering
\includegraphics[width=\textwidth]{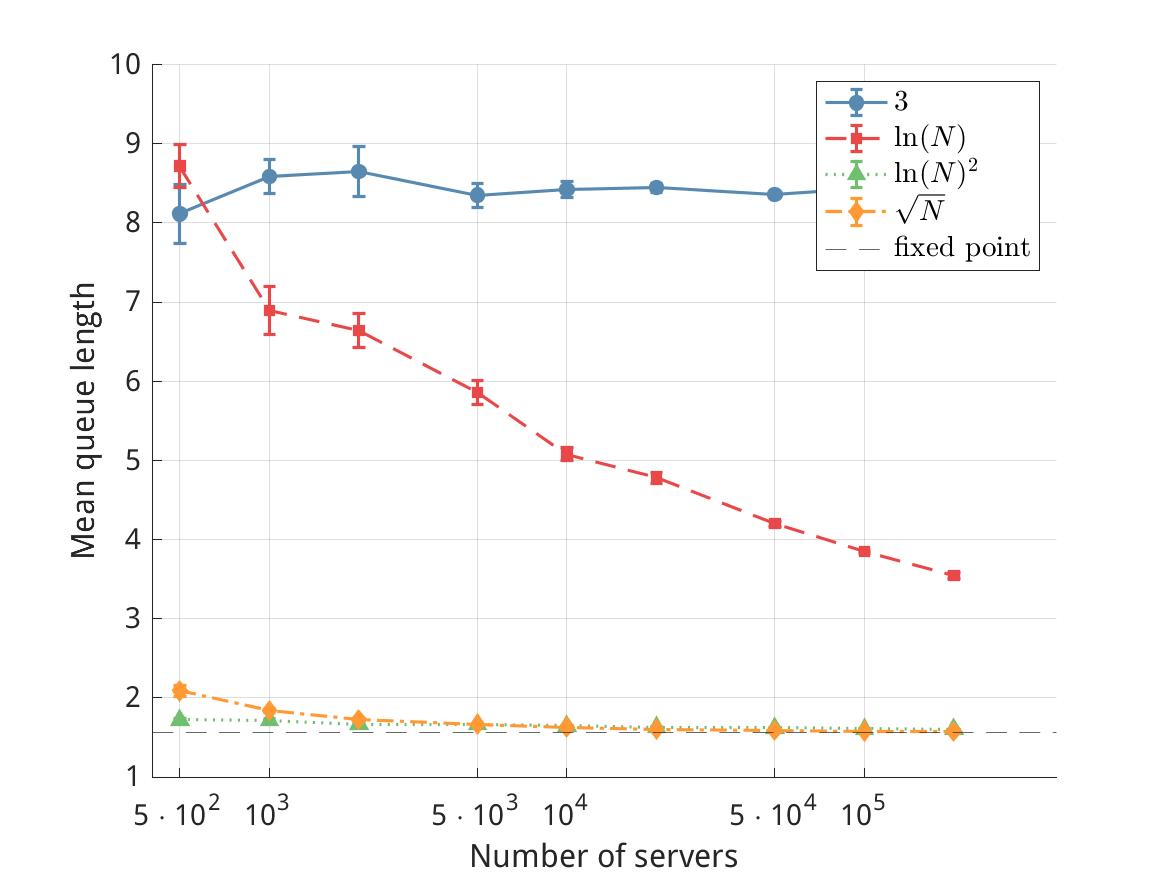}
\caption{Regular graph}
\end{subfigure}
\caption{The mean queue length in steady-state for a random regular bipartite graph and a random bipartite geometric graph for various degrees compared to the fixed point of the fluid limit.}
\label{fig:meanqueue}
\end{figure}

\noindent
\textbf{Mean queue length.} Figure \ref{fig:meanqueue} shows the mean queue length in steady-state for various (average) degrees. For the random bipartite geometric graphs, the mean queue length converges to the fixed point as $N \to \infty$ for an average degree of $(\ln(N))^2$ and $\sqrt{N}$ as expected by our main results. The mean queue length also seems to converge for an average degree of $\ln(N)$, albeit slowly. A rate of $\ln(N)$ is the edge case of our main result and, even though the mean queue length seems to converge, the tail of the occupancy is not double exponential (see Figure \ref{fig:occupancy}). For the random regular bipartite graphs, the mean queue length converges to the fixed point as $N \to \infty$ for a degree of $\ln(N)$, $(\ln(N))^2$ and $\sqrt{N}$. The mean queue length does not converge for a constant degree of $3$ in either case. Thus, the condition for the regular bipartite graph is both necessary and sufficient. \\

\begin{figure}
\centering
\begin{subfigure}[b]{0.49\textwidth}
\centering
\includegraphics[width=\textwidth]{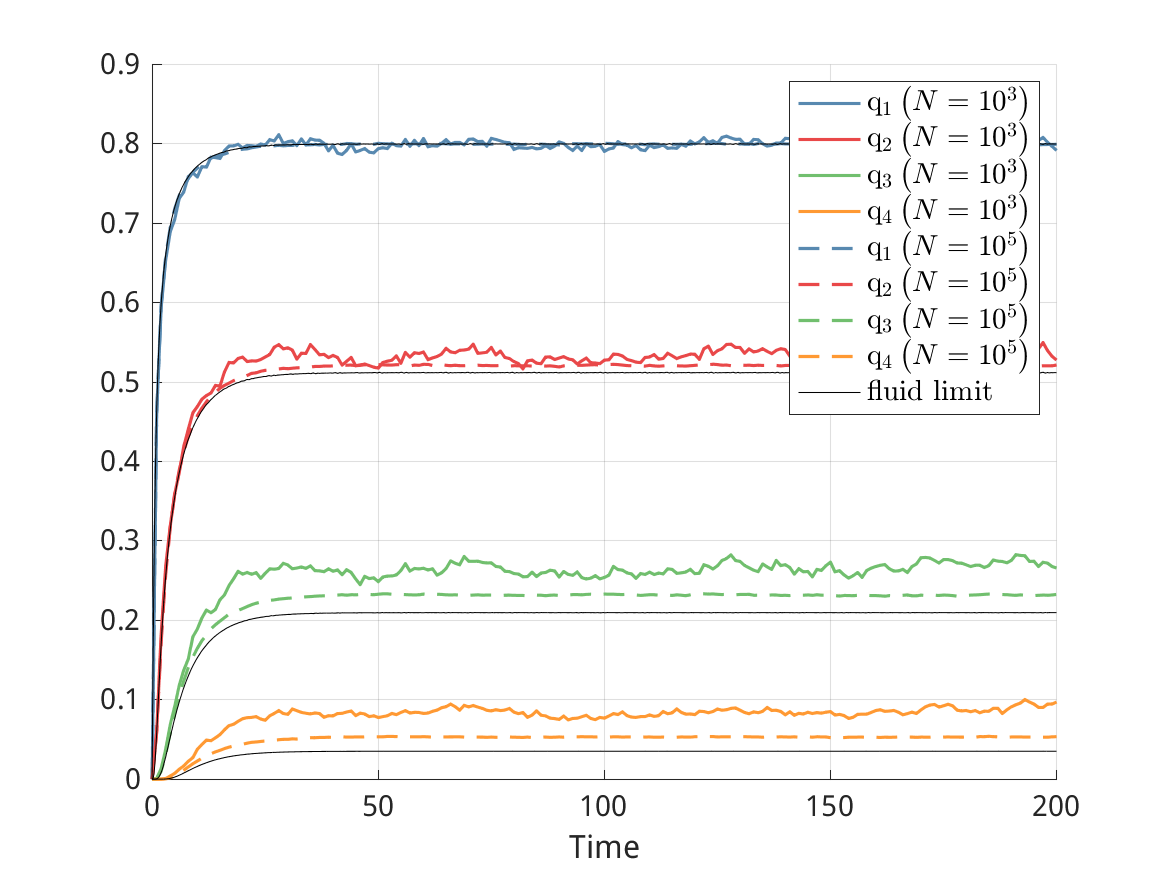}
\caption{Geometric ($\ln(N)^2$)}
\end{subfigure}
\hfill
\begin{subfigure}[b]{0.49\textwidth}
\centering
\includegraphics[width=\textwidth]{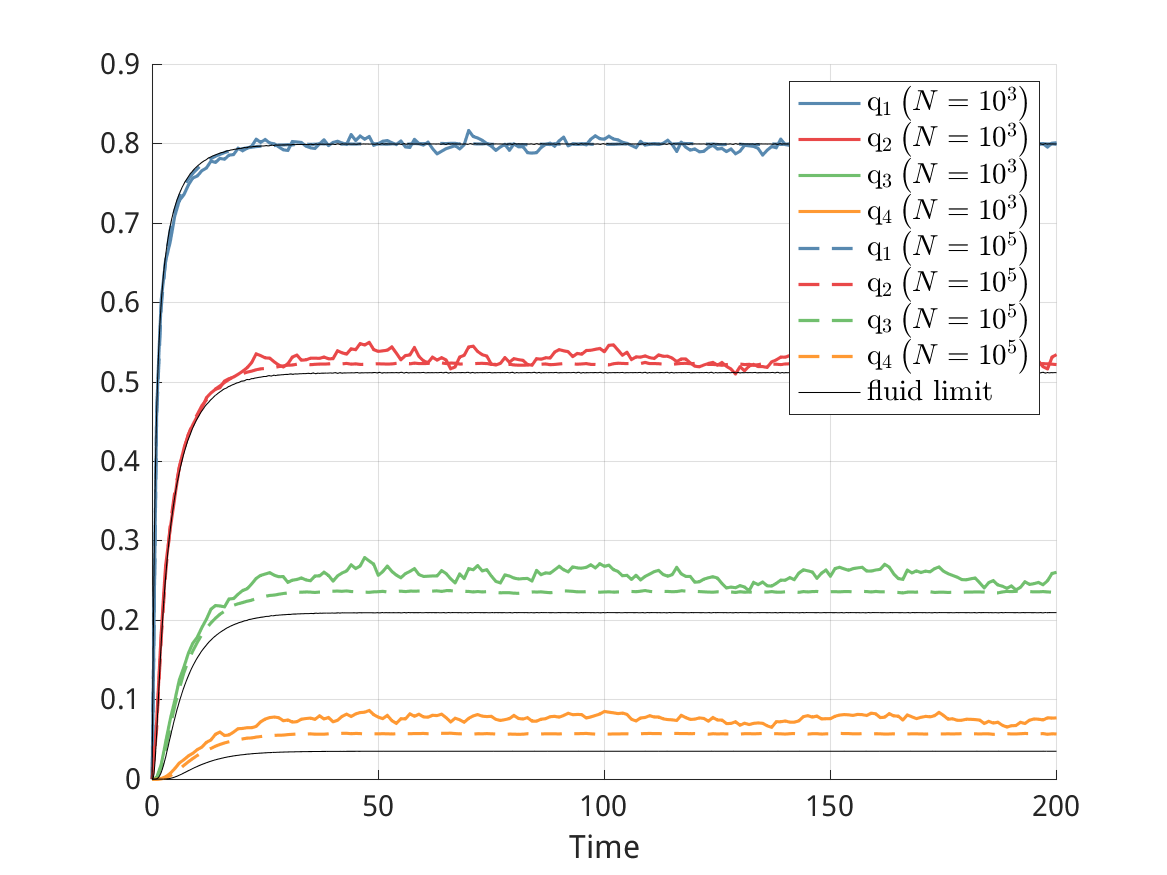}
\caption{Regular ($\ln(N)$)}
\end{subfigure}
\caption{The process-level limit of the occupancy process ($q_i(t)$) for a random bipartite geometric graph and a random regular bipartite graph and compared to the fluid limit, started from the empty state. The average degree is noted in parentheses.}
\label{fig:processlevel}
\end{figure}

\noindent
\textbf{Process-level limit from the empty state.} Figure \ref{fig:processlevel} shows the transient behavior of the system for two values of $N$, starting from the empty state. As $N$ increases the process remains close to the solution of ODEs, or the fluid limit, for both type of graphs. Note that the process still deviates slightly from the fluid limit, especially for $q_3(t)$ and $q_4(t)$, since the average degree grows only logarithmic in $N$, which directly impacts the convergence rate as in Theorem \ref{thm:process_level_limit}. \\

\begin{figure}
\centering
\begin{subfigure}[b]{0.49\textwidth}
\centering
\includegraphics[width=\textwidth]{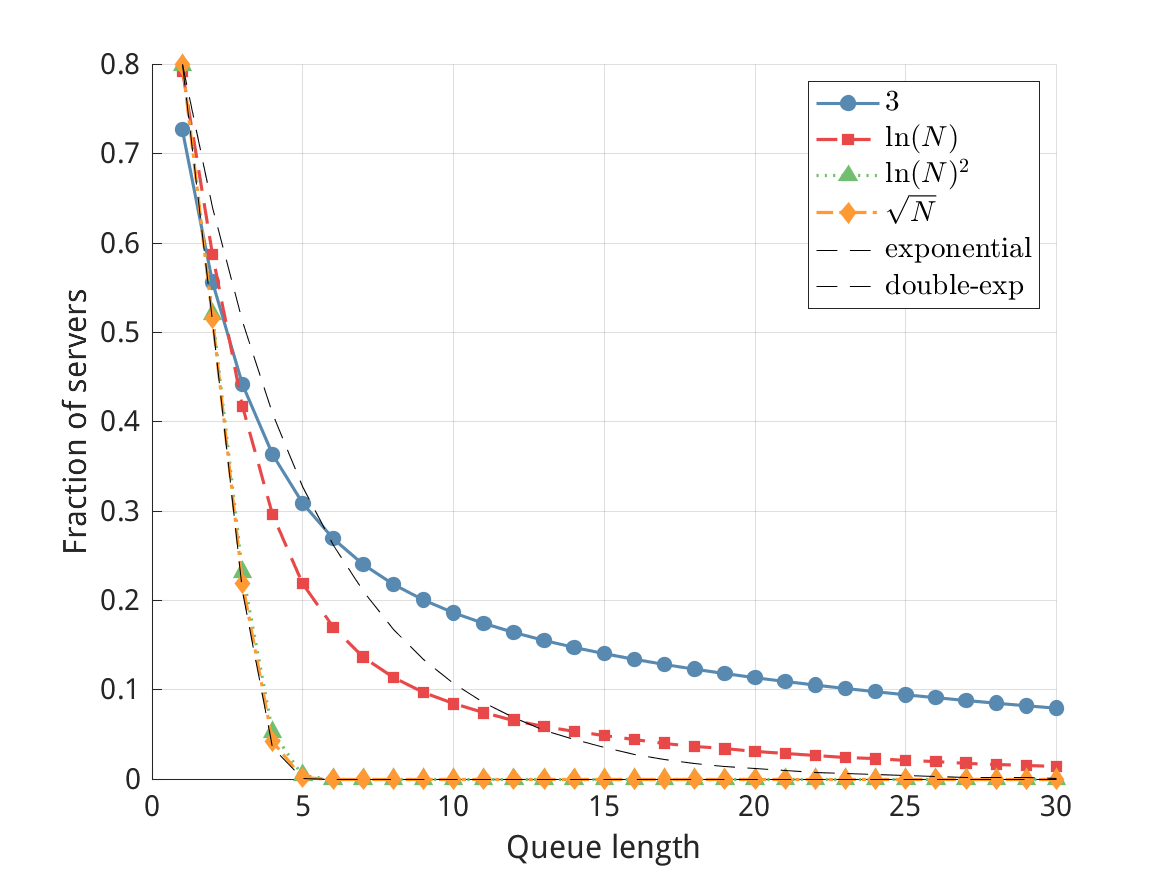}
\caption{Random geometric}
\end{subfigure}
\hfill
\begin{subfigure}[b]{0.49\textwidth}
\centering
\includegraphics[width=\textwidth]{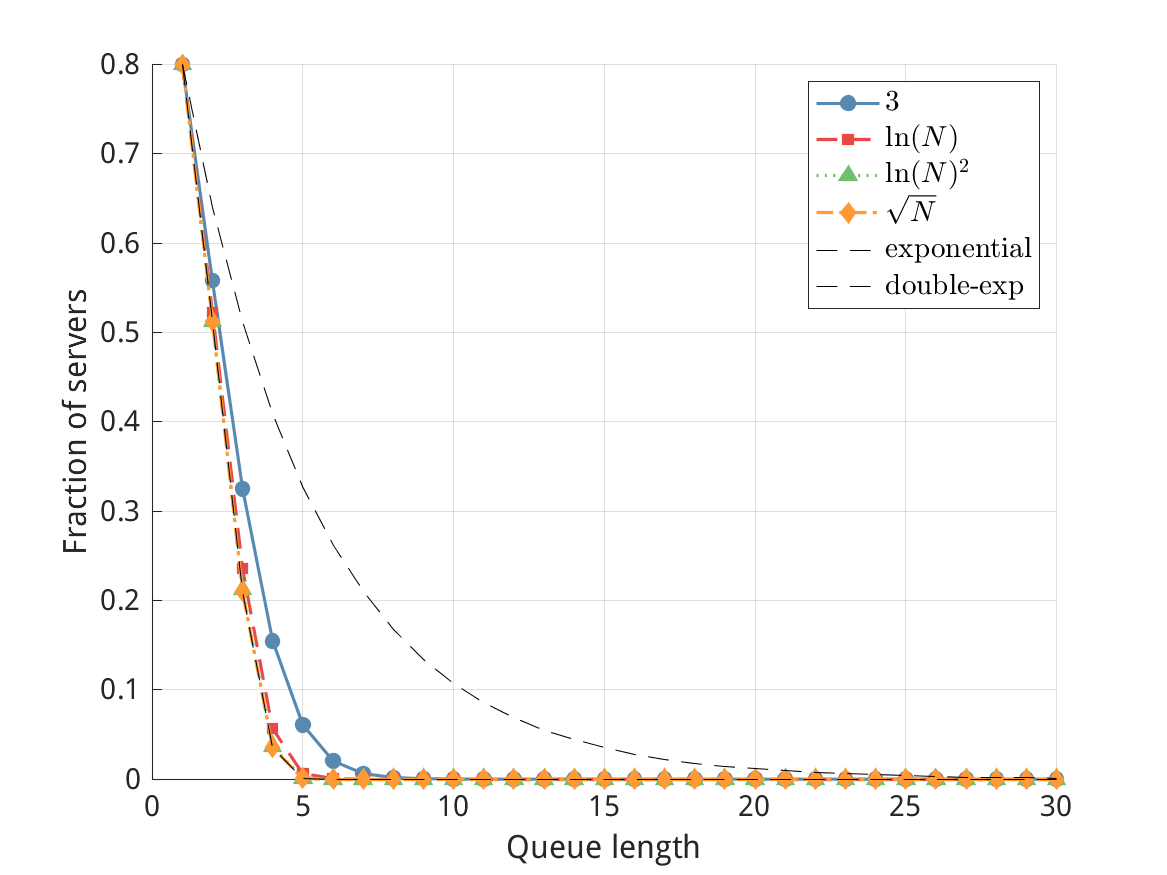}
\caption{Random regular}
\end{subfigure}
\caption{The occupancy in steady-state $(q_i(\infty))_{i \geq 1}$ for a random bipartite geometric graph and a random regular bipartite graph for various degrees compared to exponential and double-exponential tails for $N = 10^4$.}
\label{fig:occupancy}
\end{figure}

\noindent
\textbf{Exponential or double-exponential tail.} There are previous works that have asked whether a similar double-exponential tail of the queue lengths also holds for graphs of constant degree, such as a cycle~\cite{Gast15}. 
Figure~\ref{fig:occupancy} shows the occupancy $q_i(\infty)$ in steady-state for various degrees and for $N = 10^5$. The figure compares the occupancy to an exponential tail of $\lambda^i$ and a double exponential tail of $\lambda^{\frac{d^i-1}{d-1}}$. For the random bipartite geometric graph, the double exponential tail holds for an average degree of $(\ln(N))^2$ and $\sqrt{N}$ as expected by our main results. For an average degree of $3$ or $\ln(N)$, the system does not have the double exponential tail, and the performance even appears to be worse than the exponential tail (or random routing on a complete graph). 
For the random regular bipartite graph, the double exponential tail seems to holds for any choice of degree, even for a constant degree of $3$. However, in this case, the queue lengths have double exponential tail as $\lambda^{\frac{d^i-1}{d-1}}$ but with a slightly lower value of $d$. 
The question of whether it is possible to analytically characterize this value of $d$ remains a very interesting direction for future work, even for specific regular graphs with constant degree.

\section*{Acknowledgements}
The work was partially supported by the NSF grant CIF-2113027.

\def\UrlBreaks{\do\/\do-}
\bibliographystyle{apalike}
\bibliography{main,references-mukherjee}

\begin{thebibliography}{}

\bibitem[Anton et~al., 2020]{anton2020improving}
Anton, E., Ayesta, U., Jonckheere, M., and Verloop, I.~M. (2020).
\newblock Improving the performance of heterogeneous data centers through
  redundancy.
\newblock {\em Proceedings of the ACM on Measurement and Analysis of Computing
  Systems}, 4(3):1--29.

\bibitem[Anton et~al., 2021]{anton2021stability}
Anton, E., Ayesta, U., Jonckheere, M., and Verloop, I.~M. (2021).
\newblock On the stability of redundancy models.
\newblock {\em Operations Research}, 69(5):1540--1565.

\bibitem[Barbour, 1980]{Barbour80}
Barbour, A.~D. (1980).
\newblock {Density dependent Markov population processes}.
\newblock In J{\"{a}}ger, W., Rost, H., and Tautu, P., editors, {\em Biological
  Growth and Spread}, pages 36--49, Berlin, Heidelberg. Springer Berlin
  Heidelberg.

\bibitem[Bramson, 2011]{Bramson11}
Bramson, M. (2011).
\newblock {Stability of join the shortest queue networks}.
\newblock {\em Ann. Appl. Probab.}, 21(4):1568--1625.

\bibitem[Budhiraja et~al., 2019]{BMW17}
Budhiraja, A., Mukherjee, D., and Wu, R. (2019).
\newblock {Supermarket model on graphs}.
\newblock {\em Ann. Appl. Probab.}, 29(3):1740--1777.

\bibitem[Cardinaels et~al., 2019]{CBL19}
Cardinaels, E., Borst, S.~C., and van Leeuwaarden, J. S.~H. (2019).
\newblock {Job assignment in large-scale service systems with affinity
  relations}.
\newblock {\em Queueing Syst.}, 93(3-4):227--268.

\bibitem[Choudhury et~al., 2021]{choudhury2021job}
Choudhury, T., Joshi, G., Wang, W., and Shakkottai, S. (2021).
\newblock Job dispatching policies for queueing systems with unknown service
  rates.
\newblock In {\em Proceedings of the Twenty-second International Symposium on
  Theory, Algorithmic Foundations, and Protocol Design for Mobile Networks and
  Mobile Computing}, pages 181--190.

\bibitem[Comte, 2019]{comte2019dynamic}
Comte, C. (2019).
\newblock Dynamic load balancing with tokens.
\newblock {\em Computer Communications}, 144:76--88.

\bibitem[Down et~al., 1995]{down1995exponential}
Down, D., Meyn, S.~P., and Tweedie, R.~L. (1995).
\newblock Exponential and uniform ergodicity of markov processes.
\newblock {\em The Annals of Probability}, 23(4):1671--1691.

\bibitem[Gast, 2015]{Gast15}
Gast, N. (2015).
\newblock {The power of two choices on graphs: the pair-approximation is
  accurate}.
\newblock In {\em Proc. MAMA workshop 2015}, pages 69--71.

\bibitem[Gupta and Walton, 2019]{gupta2019load}
Gupta, V. and Walton, N. (2019).
\newblock Load balancing in the nondegenerate slowdown regime.
\newblock {\em Operations Research}, 67(1):281--294.

\bibitem[Janson et~al., 2000]{JLR00}
Janson, S., Luczak, T., and Rucinski, A. (2000).
\newblock {\em {Random graphs}}.
\newblock John Wiley {\&} Sons.

\bibitem[Jonckheere et~al., 2022]{jonckheere2022generalized}
Jonckheere, M., Moyal, P., Ram{\'\i}rez, C., and Soprano-Loto, N. (2022).
\newblock Generalized max-weight policies in stochastic matching.
\newblock {\em Stochastic Systems}.

\bibitem[Kurtz, 1970]{Kurtz70}
Kurtz, T.~G. (1970).
\newblock {Solutions of ordinary differential equations as limits of pure jump
  Markov processes}.
\newblock {\em J. Appl. Probab.}, 7(1):49--58.

\bibitem[Kurtz, 1971]{Kurtz71}
Kurtz, T.~G. (1971).
\newblock {Limit theorems for sequences of jump Markov processes approximating
  ordinary differential processes}.
\newblock {\em J. Appl. Probab.}, 8(2):344--356.

\bibitem[Kurtz, 1978]{Kurtz78}
Kurtz, T.~G. (1978).
\newblock {Strong approximation theorems for density dependent Markov chains}.
\newblock {\em Stoch. Proc. Appl.}, 6(3):223--240.

\bibitem[Lu et~al., 2015]{lu2015geographical}
Lu, X., Kong, F., Yin, J., Liu, X., Yu, H., and Fan, G. (2015).
\newblock Geographical job scheduling in data centers with heterogeneous
  demands and servers.
\newblock In {\em 2015 IEEE 8th International Conference on Cloud Computing},
  pages 413--420. IEEE.

\bibitem[Lu et~al., 2011]{lu2011join}
Lu, Y., Xie, Q., Kliot, G., Geller, A., Larus, J.~R., and Greenberg, A. (2011).
\newblock Join-idle-queue: A novel load balancing algorithm for dynamically
  scalable web services.
\newblock {\em Performance Evaluation}, 68(11):1056--1071.

\bibitem[Luczak and McDiarmid, 2006]{LM06}
Luczak, M.~J. and McDiarmid, C. (2006).
\newblock {On the maximum queue length in the supermarket model}.
\newblock {\em Ann. Probab.}, 34(2):493--527.

\bibitem[Maswood et~al., 2018]{maswood2018cost}
Maswood, M. M.~S., Nasim, R., Kassler, A.~J., and Medhi, D. (2018).
\newblock Cost-efficient resource scheduling under qos constraints for
  geo-distributed data centers.
\newblock In {\em NOMS 2018-2018 IEEE/IFIP Network Operations and Management
  Symposium}, pages 1--9. IEEE.

\bibitem[McDonald and Turner, 2000]{MT00}
McDonald, D.~R. and Turner, S. R.~E. (2000).
\newblock {\em {Comparing load balancing algorithms for distributed queueing
  networks}}, volume~28.
\newblock American Mathematical Society Providence, RI.

\bibitem[Meyn and Tweedie, 1993]{meyn1993stability}
Meyn, S.~P. and Tweedie, R.~L. (1993).
\newblock Stability of markovian processes iii: Foster--lyapunov criteria for
  continuous-time processes.
\newblock {\em Advances in Applied Probability}, 25(3):518--548.

\bibitem[Mitzenmacher, 1996]{Mitzenmacher96}
Mitzenmacher, M. (1996).
\newblock {\em {The power of two choices in randomized load balancing}}.
\newblock PhD thesis, University of California, Berkeley.

\bibitem[Mitzenmacher, 2001]{Mitzenmacher01}
Mitzenmacher, M. (2001).
\newblock {The power of two choices in randomized load balancing}.
\newblock {\em IEEE Trans. Parallel Distrib. Syst.}, 12(10):1094--1104.

\bibitem[Mukherjee et~al., 2018]{MBL17}
Mukherjee, D., Borst, S.~C., and Van~Leeuwaarden, J. S.~H. (2018).
\newblock {Asymptotically optimal load balancing topologies}.
\newblock {\em Proc. ACM Meas. Anal. Comput. Syst.}, 2(1):1--29.

\bibitem[Norman, 1972]{Norman72}
Norman, M.~F. (1972).
\newblock {\em {Markov processes and learning models}}, volume~84.
\newblock Academic Press New York.

\bibitem[Norman, 1974]{Norman74}
Norman, M.~F. (1974).
\newblock {A central limit theorem for Markov processes that move by small
  steps}.
\newblock {\em Ann. Probab.}, 2(6):1065--1074.

\bibitem[Panigrahy et~al., 2022a]{panigrahy2022analysis}
Panigrahy, N.~K., Vasantam, T., Basu, P., Towsley, D., Swami, A., and Leung,
  K.~K. (2022a).
\newblock On the analysis and evaluation of proximity based load balancing
  policies.
\newblock {\em ACM Transactions on Modeling and Performance Evaluation of
  Computing Systems}.

\bibitem[Panigrahy et~al., 2022b]{PVBT22}
Panigrahy, N.~K., Vasantam, T., Basu, P., Towsley, D., Swami, A., and Leung,
  K.~K. (2022b).
\newblock {On the analysis and evaluation of proximity based load balancing
  policies}.
\newblock {\em ACM Trans. Model. Perform. Eval. Comput. Syst.}

\bibitem[Penrose, 2003]{Penrose-book-03}
Penrose, M. (2003).
\newblock {\em {Random Geometric Graphs}}.
\newblock Oxford University Press.

\bibitem[Rutten and Mukherjee, 2022]{RM22}
Rutten, D. and Mukherjee, D. (2022).
\newblock {Load balancing under strict compatibility constraints}.
\newblock {\em Math. Oper. Res.}

\bibitem[Tsitsiklis and Xu, 2017]{TX17}
Tsitsiklis, J.~N. and Xu, K. (2017).
\newblock {Flexible queueing architectures}.
\newblock {\em Oper. Res.}, 65(5):1398--1413.

\bibitem[van~der Boor et~al., 2022]{BBLM21}
van~der Boor, M., Borst, S., van Leeuwaarden, J., and Mukherjee, D. (2022).
\newblock {Scalable load balancing in networked systems: A survey of recent
  advances}.
\newblock {\em SIAM Rev.}, 64(3):554–622.

\bibitem[van~der Boor and Comte, 2021]{van2021load}
van~der Boor, M. and Comte, C. (2021).
\newblock Load balancing in heterogeneous server clusters: Insights from a
  product-form queueing model.
\newblock In {\em 2021 IEEE/ACM 29th International Symposium on Quality of
  Service (IWQOS)}, pages 1--10. IEEE.

\bibitem[Vvedenskaya et~al., 1996]{VDK96}
Vvedenskaya, N.~D., Dobrushin, R.~L., and Karpelevich, F.~I. (1996).
\newblock {Queueing system with selection of the shortest of two queues: An
  asymptotic approach}.
\newblock {\em Problemy Peredachi Informatsii}, 32(1):20--34.

\bibitem[Weng and Wang, 2020]{weng2020achieving}
Weng, W. and Wang, W. (2020).
\newblock Achieving zero asymptotic queueing delay for parallel jobs.
\newblock {\em Proceedings of the ACM on Measurement and Analysis of Computing
  Systems}, 4(3):1--36.

\bibitem[Weng et~al., 2020]{WZS20}
Weng, W., Zhou, X., and Srikant, R. (2020).
\newblock {Optimal load balancing with locality constraints}.
\newblock {\em Proc. ACM Meas. Anal. Comput. Syst.}, 4(3):1--37.

\end{thebibliography}

\appendix

\section{Proof of Lemma~\ref{lem:asg_monotone}}
\label{sec:app-lem4}

\begin{proof}[Proof of Lemma~\ref{lem:asg_monotone}]
Fix any $0 \leq y < x \leq 1$. Then,
\begin{equation}
    \frac{x^d - y^d}{x - y}
    = \frac{\left( (x - y) + y \right)^d - y^d}{x - y}
    = \frac{\sum_{i = 0}^d \binom{d}{i} (x - y)^i y^{d-i} - y^d}{x - y}
    = \sum_{i = 1}^d \binom{d}{i} (x - y)^{i-1} y^{d-i}.
\end{equation}
Also,
\begin{equation}
\begin{multlined}
    \frac{\partial}{\partial y} \frac{x^d - y^d}{x - y}
    =  \frac{x^d - y^d}{(x - y)^2} - \frac{d y^{d-1}}{x - y}
    = \frac{\left( (x - y) + y \right)^d - d (x - y) y^{d-1} - y^d}{(x - y)^2} \\
    = \frac{\sum_{i = 0}^d \binom{d}{i} (x - y)^i y^{d-i} - d (x - y) y^{d-1} - y^d}{(x - y)^2}
    = \sum_{i = 2}^d \binom{d}{i} (x - y)^{i-2} y^{d-i}
    \geq 0.
\end{multlined}
\end{equation}
Then, by the mean value theorem, there exists $\xi \in [y_1, y_2]$ such that
\begin{equation}
\begin{multlined}
    \frac{x_1^d - y_1^d}{x_1 - y_1}
    = \sum_{i = 1}^d \binom{d}{i} y_1^{d-i} (x_1 - y_1)^{i-1}
    \leq \sum_{i = 1}^d \binom{d}{i} y_1^{d-i} (x_2 - y_1)^{i-1}
    = \frac{x_2^d - y_1^d}{x_2 - y_1} \\
    = \frac{x_2^d - y_2^d}{x_2 - y_2} - \frac{\partial}{\partial y} \frac{x_2^d - y^d}{x_2 - y} \bigg\rvert_{y = \xi} (y_2 - y_1)
    \leq \frac{x_2^d - y_1^d}{x_2 - y_1},
\end{multlined}
\end{equation}
which completes the proof of the lemma.
\end{proof}

\section{Proof of Lemma \ref{lem:q_martingale}}
\label{sec:q_martingale}

\begin{proof}[Proof of Lemma~\ref{lem:q_martingale}]
Fix any $t \geq 0$. To change the value of $Q_i^w(t)$, a task must arrive to a server $v \in \cN_w$ with queue length $i-1$ or a task must depart a server $v \in \cN_w$ with queue length $i$.

Fix any $v \in \cN_w$ with $X_v(t-) = i-1$ and let us compute the probability that a task is assigned to $v$. At the epoch time of an arrival, a task adopts a task type $w' \in W$ uniformly at random. The task is then routed to a server with queue length $i-1$ if and only if the system only samples servers with queue length at least $i-1$ and not only servers with queue length at least $i$, which happens with probability $q_{i-1}^{w'}(t-)^d - q_i^{w'}(t-)^d$. By symmetry, any server in $\cN_{w'}$ with queue length $i-1$ has the same probability of receiving the task and there are a total of $Q_{i-1}^{w'}(t-) - Q_i^{w'}(t-)$ of such eligible server. This results in a probability of
\begin{equation}
\begin{multlined}
    \frac{1}{M} \sum_{w' \in \cN_v} \frac{q_{i-1}^{w'}(t-)^d - q_i^{w'}(t-)^d}{Q_{i-1}^{w'}(t-) - Q_i^{w'}(t-)}.
\end{multlined}
\end{equation}

Now, fix any $v \in \cN_w$ with $X_v(t-) = i$ and let us compute the probability that a task departs $v$. At the epoch time of a potential departure, a server $v' \in V$ is chosen uniformly at random and a task departs if $v'$ has at least one task in its queue. This results in a probability of $1 / N$.

We describe the arrival and departure process as follows. Let $N(t)$ be a Poisson process of rate $(\lambda + 1) N$. An event of the process is either an arrival of type $w \in W$ with probability $\lambda / ((\lambda + 1) M)$ or a potential departure at server $v \in V$ with probability $1 / ((\lambda + 1) N)$, independent of the past. Note that this is equivalent to the model description introduced before. Hence, for any $h > 0$,
\begin{equation}
\begin{multlined}
\label{eq:q_diff_h}
    \E\left[ \Delta Q_i^w(t) \mid \cF_t \right]
    = \E\left[ \Delta Q_i^w(t) \mid \Delta N(t) = 1, \cF_t \right] \P\left( \Delta N(t) = 1 \right) \\
    \pm \E\left[ \Delta N(t) \mid \Delta N(t) \geq 2 \right] \P\left( \Delta N(t) \geq 2 \right) \\
    = \left( \frac{\lambda}{\lambda + 1} \sum_{\substack{v \in \cN_w \\ X_v(t) = i-1}} \frac{1}{M} \sum_{w' \in \cN_v} \frac{q_{i-1}^{w'}(t)^d - q_i^{w'}(t)^d}{Q_{i-1}^{w'}(t) - Q_i^{w'}(t)} - \frac{1}{\lambda + 1} \sum_{\substack{v \in \cN_w \\ X_v(t) = i}} \frac{1}{N} \right) (\lambda + 1) N h e^{-(\lambda + 1) N h} \\
    \pm \left( (\lambda + 1) N h + 2 \right) \left( (\lambda + 1) N h \right)^2,
\end{multlined}
\end{equation}
where $\Delta Q_i^w(t) := Q_i^w(t + h) - Q_i^w(t)$ and $\Delta N(t) := N(t + h) - N(t)$. Here, we use the shorthand notation $\pm x$ to denote a term in $[-x, x]$. Fix any $0 \leq s \leq t$. The equation above implies that
\begin{equation}
\begin{multlined}
\label{eq:q_deriv}
    \frac{d}{dt} \E\left[ Q_i^w(t) \mid \cF_s \right]
    = \lim_{h \downarrow 0} \frac{\E\left[ \E\left[ Q_i^w(t + h) - Q_i^w(t) \mid \cF_t \right] \mid \cF_s \right]}{h} \\
    = \E\Big[ \frac{\lambda N}{M} \sum_{\substack{v \in \cN_w \\ X_v(t) = i-1}} \sum_{w' \in \cN_v} \frac{q_{i-1}^{w'}(t)^d - q_i^{w'}(t)^d}{Q_{i-1}^{w'}(t) - Q_i^{w'}(t)} - \left( Q_i^w(t) - Q_{i+1}^w(t) \right) \mid \cF_s \Big],
\end{multlined}
\end{equation}
and hence, by the second fundamental theorem of calculus and Fubini's theorem,
\begin{equation}
\begin{multlined}
\label{eq:q_int}
    \E\left[ Q_i^w(t) - Q_i^w(s) \mid \cF_s \right]
    = \int_s^t \frac{d}{du} \E\left[ Q_i^w(u) \mid \cF_s \right] \diff u \\
    = \E\Big[ \int_s^t \frac{\lambda N}{M} \sum_{\substack{v \in \cN_w \\ X_v(u) = i-1}} \sum_{w' \in \cN_v} \frac{q_{i-1}^{w'}(u)^d - q_i^{w'}(u)^d}{Q_{i-1}^{w'}(u) - Q_i^{w'}(u)} - \left( Q_i^w(u) - Q_{i+1}^w(u) \right) \diff u \mid \cF_s \Big],
\end{multlined}
\end{equation}
which proves that $\E\left[ M_i^w(t) \mid \cF_s \right] = M_i^w(s)$. Also,
\begin{equation}
\begin{multlined}
    \lvert M_i^w(t) \rvert
    \leq \left\lvert Q_i^w(t) - Q_i^w(0) \right\rvert + \int_0^t \frac{\lambda N}{M} \sum_{\substack{v \in \cN_w \\ X_v(s) = i-1}} \sum_{w' \in \cN_v} \frac{q_{i-1}^{w'}(s)^d - q_i^{w'}(s)^d}{Q_{i-1}^{w'}(s) - Q_i^{w'}(s)} + \left( Q_i^w(s) - Q_{i+1}^w(s) \right) \diff s \\
    \leq d_w + \Big( \frac{\lambda N}{M} \sum_{v \in \cN_w} \sum_{w' \in \cN_v} \frac{d}{d_{w'}} + d_w \Big) t
    < \infty,
\end{multlined}
\end{equation}
by the mean-value theorem. This implies, in particular, that $M_i^w(t)$ is a square-integrable martingale.

We proceed by computing the quadratic variation of $M_i^w(t)$. As $Q_i^w(0)$ is a constant and the integral term is a continuous, finite variation process, it follows that $\left[ M_i^w \right]_t = \left[ Q_i^w \right]_t$. Furthermore, since $Q_i^w(t)$ is a finite variation process that is right-continuous with left limits, it follows that $\left[ Q_i^w \right]_t = \sum_{k = 1}^n (Q_i^w(t_k) - Q_i^w(t_k-))^2$, where $t_1, t_2, \dots, t_n$ are the (random) jump times of the process. Now, recall that the jumps of $Q_i^w(t)$ are always equal to one and hence $\left[ Q_i^w \right]_t$ must simply count the total number of jumps. Thus, a similar computation along the lines of \eqref{eq:q_diff_h} and \eqref{eq:q_deriv} yields
\begin{equation}
    \frac{d}{dt} \E\left[ \left[ Q_i^w \right]_t \right]
    = \E\left[ \frac{\lambda N}{M} \sum_{\substack{v \in \cN_w \\ X_v(t) = i-1}} \sum_{w' \in \cN_v} \frac{q_{i-1}^{w'}(t)^d - q_i^{w'}(t)^d}{Q_{i-1}^{w'}(t) - Q_i^{w'}(t)} + \left( Q_i^w(t) - Q_i^w(t) \right) \right].
\end{equation}
Then, applying the second fundamental theorem of calculus and Fubini's theorem as done in \eqref{eq:q_int} concludes the proof of the lemma.
\end{proof}

\section{Auxiliary lemmas}

\begin{lemma}
\label{lem:asg_lipschitz}
Fix any $0 \leq y_1 < x_1 \leq 1$ and $0 \leq y_2 < x_2 \leq 1$. Then, there exists $K > 0$ (depending only on $d$) such that
\begin{equation}
    \left\lvert \frac{x_1^d - y_1^d}{x_1 - y_1} - \frac{x_2^d - y_2^d}{x_2 - y_2} \right\rvert
    \leq K \left( \lvert x_1 - x_2 \rvert + \lvert y_1 - y_2 \rvert \right).
\end{equation}
\end{lemma}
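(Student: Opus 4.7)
The plan is to recognize that the quotient $(x^d-y^d)/(x-y)$ is, via the standard factorization $x^d-y^d = (x-y)\sum_{i=0}^{d-1} x^i y^{d-1-i}$, nothing but the polynomial
\begin{equation}
p(x,y) := \sum_{i=0}^{d-1} x^i y^{d-1-i},
\end{equation}
extended smoothly to all of $[0,1]^2$ (the restriction $y<x$ is only needed to make the quotient literally well-defined). Thus the lemma reduces to the Lipschitz continuity of the polynomial $p$ on the compact convex set $[0,1]^2$.

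From there I would simply bound the partial derivatives of $p$ on $[0,1]^2$ and invoke a coordinatewise mean value theorem. A direct differentiation gives $\partial_x p(x,y) = \sum_{i=1}^{d-1} i\, x^{i-1} y^{d-1-i}$, so on $[0,1]^2$ one has $|\partial_x p| \le \sum_{i=1}^{d-1} i = d(d-1)/2$, and the same bound holds for $|\partial_y p|$ by symmetry. Setting $K := d(d-1)/2$ and splitting the difference along one coordinate at a time,
\begin{equation}
|p(x_1,y_1)-p(x_2,y_2)| \le |p(x_1,y_1)-p(x_2,y_1)| + |p(x_2,y_1)-p(x_2,y_2)| \le K\bigl(|x_1-x_2|+|y_1-y_2|\bigr),
\end{equation}
which is exactly the claim, with $K$ depending only on $d$.

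There is no genuine obstacle: the whole content of the lemma is the factorization identity, after which the Lipschitz estimate is routine. The only minor point worth noting is that the constant $K$ can be taken uniform in the positions of $x_i, y_i$ within $[0,1]$ (including the diagonal $x=y$ that appears implicitly in the earlier use of Lemma \ref{lem:asg_lipschitz} when comparing $q^{w'}$ values that may coincide), precisely because one works with the polynomial extension $p$ rather than the quotient itself.
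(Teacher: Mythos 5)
Your proof is correct, and it takes a genuinely different route from the paper. You use the standard geometric-sum factorization $\frac{x^d-y^d}{x-y}=\sum_{i=0}^{d-1}x^i y^{d-1-i}$ to recognize the quotient as a polynomial $p(x,y)$ on $[0,1]^2$, bound its partials by $\sum_{i=1}^{d-1} i = d(d-1)/2$, and invoke a coordinatewise mean-value argument. The paper instead writes $\frac{x^d-y^d}{x-y}=\sum_{i=1}^d\binom{d}{i}(x-y)^{i-1}y^{d-i}$ (the binomial expansion of $((x-y)+y)^d$), compares the two sums term by term via an add-subtract telescoping and the bound $|a^k-b^k|\le k|a-b|$ on $[0,1]$, and arrives at $K=(2^d-1)(d-1)$. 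The paper presumably prefers the $\binom{d}{i}(x-y)^{i-1}y^{d-i}$ form because it is the same expansion already used in the monotonicity Lemma~\ref{lem:asg_monotone}, so the algebra is shared; your approach is shorter, more transparent, and yields the noticeably smaller constant $K=d(d-1)/2$. Since the lemma only asserts existence of a constant depending on $d$, both are equally valid, but your version is the tighter of the two. One small caveat: in treating $p$ as a polynomial on all of $[0,1]^2$ you implicitly extend it across the diagonal $x=y$; this is fine, and in fact the lemma's hypotheses $y_j<x_j$ already keep you off the diagonal, so nothing more needs to be said.
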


\begin{proof}
Fix any $0 \leq y < x \leq 1$. Then,
\begin{equation}
    \frac{x^d - y^d}{x - y}
    = \frac{((x - y) + y)^d - y^d}{x - y}
    = \frac{\sum_{i = 0}^d \binom{d}{i} (x - y)^i y^{d-i} - y^d}{x - y}
    = \sum_{i = 1}^d \binom{d}{i} (x - y)^{i-1} y^{d-i}.
\end{equation}
Note that $\lvert x^d - y^d \rvert \leq d \lvert x - y \rvert$ by the mean value theorem. Therefore,
\begin{equation}
\begin{multlined}
    \left\lvert \frac{x_1^d - y_1^d}{x_1 - y_1} - \frac{x_2^d - y_2^d}{x_2 - y_2} \right\rvert
    \leq \sum_{i = 1}^d \binom{d}{i} \left\lvert (x_1 - y_1)^{i-1} y_1^{d-i} - (x_2 - y_2)^{i-1} y_2^{d-i} \right\rvert \\
    = \sum_{i = 1}^d \binom{d}{i} \left\lvert (x_1 - y_1)^{i-1} y_1^{d-i} - (x_1 - y_1)^{i-1} y_2^{d-i} + (x_1 - y_1)^{i-1} y_2^{d-i} - (x_2 - y_2)^{i-1} y_2^{d-i} \right\rvert \\
    \leq \sum_{i = 1}^d \binom{d}{i} \left( (d-i) \left\lvert y_1 - y_2 \right\rvert + (i-1) \left\lvert (x_1 - y_1) - (x_2 - y_2) \right\rvert \right) \\
    \leq \sum_{i = 1}^d \binom{d}{i} \left( (i-1) \left\lvert x_1 - x_2 \right\rvert + (d-1) \left\lvert y_1 - y_2 \right\rvert \right) \\
    \leq (2^d - 1) (d-1) \left( \left\lvert x_1 - x_2 \right\rvert + \left\lvert y_1 - y_2 \right\rvert \right),
\end{multlined}
\end{equation}
which completes the proof of the lemma.
\end{proof}

\begin{lemma}
\label{lem:martingale_vanishes}
Let $M_i^w(t)$ be as defined in Lemma \ref{lem:q_martingale}. Then, for all $t \geq 0$,
\begin{equation}
    \E\left[ \sum_{i = 1}^\infty \frac{1}{M} \sum_{w \in W} \sup_{s \in [0, t]} \frac{M_i^w(s)^2}{d_w^2} \right]
    \leq 4 t (\rho_0 d + 1) \gamma(G).
\end{equation}
\end{lemma}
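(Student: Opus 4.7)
The plan is to combine Doob's $L^2$ maximal inequality with the explicit form of the expected quadratic variation provided by Lemma~\ref{lem:q_martingale}, and then collapse/telescope the resulting sums over $i$ before recognizing $\gamma(G)$ and $\rho(G_N)$ after summing over $w$.

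First, since $M_i^w(t)$ is a square-integrable martingale started at zero, Doob's maximal inequality gives
\begin{equation*}
\E\Bigl[\sup_{s\in[0,t]}M_i^w(s)^2\Bigr]\;\le\;4\,\E\bigl[M_i^w(t)^2\bigr]\;=\;4\,\E\bigl[[M_i^w]_t\bigr],
\end{equation*}
so it suffices to bound $\frac{1}{M}\sum_{w\in W}\frac{1}{d_w^2}\sum_{i\ge 1}\E[[M_i^w]_t]$ by $(\rho_0 d+1)\,t\,\gamma(G)$. Using the formula from Lemma~\ref{lem:q_martingale}, the integrand splits into an arrival part and a departure part. For the departure part, $\sum_{i\ge 1}(Q_i^w(s)-Q_{i+1}^w(s))=Q_1^w(s)\le d_w$ by telescoping. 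For the arrival part, the outer sum $\sum_{i\ge 1}\mathbbm{1}\{X_v(s)=i-1\}$ picks out exactly one index for each $v\in\cN_w$, so the sum over $i$ collapses to a single term per $v$. Applying the mean value theorem to $(q^d-p^d)/(q-p)\le d$ for $0\le p\le q\le 1$, each such term is bounded by $d/d_{w'}$, giving
\begin{equation*}
\sum_{i=1}^\infty\E\bigl[[M_i^w]_t\bigr]\;\le\;\E\Bigl[\int_0^t\Bigl(\frac{\lambda N}{M}\sum_{v\in\cN_w}\sum_{w'\in\cN_v}\frac{d}{d_{w'}}\;+\;d_w\Bigr)ds\Bigr].
\end{equation*}

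Next I divide by $d_w^2$, average over $w\in W$, and recognize the two familiar structural quantities. The departure contribution yields $\frac{t}{M}\sum_{w\in W}\frac{1}{d_w}=t\,\gamma(G)$ directly. For the arrival contribution, the key manipulation is to rewrite $\frac{N}{M}\sum_{w'\in\cN_v}\frac{1}{d_{w'}}\le \rho_0/\lambda$ using the definition of $\rho(G_N)$ in~\eqref{eq:stability}, which leaves
\begin{equation*}
\frac{\lambda t d}{M}\sum_{w\in W}\frac{1}{d_w^2}\sum_{v\in\cN_w}\frac{\rho_0}{\lambda}\;=\;\frac{\rho_0 t d}{M}\sum_{w\in W}\frac{d_w}{d_w^2}\;=\;\rho_0 d\,t\,\gamma(G).
\end{equation*}
Summing the two contributions and multiplying by the factor $4$ from Doob's inequality produces the claimed bound $4t(\rho_0 d+1)\gamma(G)$.

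There is no real obstacle here: the only subtle point is the order in which one interchanges sums and expectations. The interchange $\sum_i\E[\sup_s M_i^w(s)^2]=\E[\sum_i\sup_s M_i^w(s)^2]$ follows from monotone convergence (nonnegative summands), and the swap $\sum_i[M_i^w]_t=$ collapsing sum is justified pointwise on each sample path since only finitely many queues are nonempty at any fixed time. Everything else is a careful bookkeeping exercise in exchanging the order of summation over $w,v,w'$ to uncover the $\rho(G_N)$ and $\gamma(G_N)$ quantities.
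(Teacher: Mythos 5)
Your proposal is correct and follows essentially the same route as the paper: Doob's $L^2$ maximal inequality, the expected quadratic variation formula from Lemma~\ref{lem:q_martingale}, the mean-value-theorem bound $(q^d-p^d)/(q-p)\le d$ on the arrival term, telescoping on the departure term, and then exchanging sums over $w,v,w'$ to recognize $\rho(G_N)\le\rho_0$ and $\gamma(G)$. The only (cosmetic) difference is that the paper first bounds the arrival integrand by $\rho_0 d\bigl(Q_{i-1}^w(s)-Q_i^w(s)\bigr)$ and then telescopes over $i$, whereas you first collapse the $i$-sum (one nonzero term per $v\in\cN_w$) and then apply the $\rho_0$ bound; both yield the same $\rho_0\,d\,d_w$ intermediate quantity.
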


\begin{proof}
Fix any $i \in \N$, $w \in W$ and $t \geq 0$. Note that $M_i^w(t)$ is a square-integrable martingale and therefore, by Doob's martingale inequality,
\begin{equation}
\begin{multlined}
    \E\Big[ \sup_{s \in [0, t]} M_i^w(s)^2 \Big]
    \leq 4 \E\left[  M_i^w(t)^2 \right]
    = 4 \E\left[ \left[ M_i^w \right]_t \right] \\
    = 4 \E\Big[ \int_0^t \frac{\lambda M}{N} \sum_{\substack{v \in \cN_w \\ X_v(s) = i-1}} \sum_{w' \in \cN_v} \frac{q_{i-1}^{w'}(s)^d - q_i^{w'}(s)^d}{Q_{i-1}^{w'}(s) - Q_i^{w'}(s)} + \left( Q_i^w(s) - Q_{i+1}^w(s) \right) \diff s \Big] \\
    \leq 4 \E\Big[ \int_0^t \sum_{\substack{v \in \cN_w \\ X_v(s) = i-1}} \frac{\lambda M}{N} \sum_{w' \in \cN_v} \frac{d}{d_{w'}} + \left( Q_i^w(s) - Q_{i+1}^w(s) \right) \diff s \Big] \\
    \leq 4 \E\left[ \int_0^t \rho_0 d \left( Q_{i-1}^w(s) - Q_i^w(s) \right) + \left( Q_i^w(s) - Q_{i+1}^w(s) \right) \diff s \right],
\end{multlined}
\end{equation}
where we use Lemma \ref{lem:q_martingale} in the second equality and the mean-value theorem in the second inequality. The equation above implies, by the monotone convergence theorem and Fubini's theorem,
\begin{equation}
\begin{multlined}
    \E\left[ \sum_{i = 1}^\infty \frac{1}{M} \sum_{w \in W} \sup_{s \in [0, t]} \frac{M_i^w(s)^2}{d_w^2} \right] \\
    \leq 4 \int_0^t \frac{1}{M} \sum_{w \in W} \frac{1}{d_w^2} \sum_{i = 1}^\infty \E\left[ \rho_0 d \left( Q_{i-1}^w(s) - Q_i^w(s) \right) + \left( Q_i^w(s) - Q_{i+1}^w(s) \right) \right] \diff s \\
    \leq 4 \int_0^t \frac{1}{M} \sum_{w \in W} \frac{\rho_0 d + 1}{d_w} \diff s
    = 4 t \left( \rho_0 d + 1 \right) \gamma(G),
\end{multlined}
\end{equation}
which completes the proof of the lemma.
\end{proof}

\end{document}